\DeclareMathAlphabet{\mathsf}{OT1}{\sfdefault}{m}{n}
\SetMathAlphabet{\mathsf}{bold}{OT1}{\sfdefault}{b}{n}
\DeclareMathAlphabet{\mathfrak}{U}{jkpmia}{m}{it}
\SetMathAlphabet{\mathfrak}{bold}{U}{jkpmia}{bx}{it}
\numberwithin{equation}{section}
\definecolor{myteal}{RGB}{0 123 137}
\newcommand{\e}{\mathrm{e}}
\renewcommand{\hat}{\widehat}
\renewcommand{\tilde}{\widetilde}
\theoremstyle{plain}
\newtheorem{theorem}{Theorem}
\newtheorem{lemma}[theorem]{Lemma}
\newtheorem{corollary}[theorem]{Corollary}
\newtheorem{proposition}[theorem]{Proposition}
\theoremstyle{definition}
\newtheorem{remark}[theorem]{Remark}
\newtheorem{example}[theorem]{Example}
\providecommand{\customgenericname}{}
\newcommand{\newcustomtheorem}[2]{%
	\newenvironment{#1}[1]
	{%
		\renewcommand\customgenericname{#2}%
		\renewcommand\theinnercustomgeneric{##1}%
		\innercustomgeneric
	}
	{\endinnercustomgeneric}
}
\numberwithin{equation}{section}
\numberwithin{theorem}{section}
\newcommand{\sc}[2]{\langle#1,#2\rangle}
\newcommand{\norm}[1]{\lVert#1\rVert}
\let\P\relax
\DeclareMathOperator{\P}{{\mathbb P}}
\DeclareMathOperator{\E}{{\mathbb E}}
\DeclareMathOperator{\R}{{\mathbb R}}
\renewcommand{\theta}{\vartheta}    
\newcommand*\diff{\mathop{}\!\mathrm{d}}
\title{\fontsize{16}{19} \selectfont Nonparametric velocity estimation in stochastic convection-diffusion equations from multiple local measurements}
\author{Claudia Strauch\thanks{Aarhus University, Department of Mathematics, Ny Munkegade 118, 8000 Aarhus C, Denmark. \newline Email: \href{mailto:strauch@math.au.dk}{strauch@math.au.dk}/\href{mailto:tiepner@math.au.dk}{tiepner@math.au.dk}} \and Anton Tiepner\footnotemark[1]}
\begin{document}

\maketitle
\begin{abstract}
We investigate pointwise estimation of the function-valued velocity field of a second-order linear SPDE. Based on multiple spatially localised measurements, we construct a weighted augmented MLE and study its convergence properties as the spatial resolution of the observations tends to zero and the number of measurements increases. By imposing Hölder smoothness conditions, we recover the pointwise convergence rate known to be minimax-optimal in the linear regression framework. The optimality of the rate in the current setting is verified by adapting the lower bound ansatz based on the RKHS of local measurements to the nonparametric situation.
 \end{abstract}

\noindent
\small{\textit{\href{https://mathscinet.ams.org/mathscinet/msc/msc2020.html}{2020 MSC:}} Primary 60H15; secondary 62G05, 62G08\\
\textit{key words:} Stochastic convection-diffusion equation, nonparametric regression, local likelihood estimation, minimax lower bound, local measurements.}
\normalsize

\section{Introduction}
Stochastic partial differential equations (SPDEs) are an appealing tool to model spatio-temporal data. 
They describe the evolution of dynamical systems and can be utilised in almost all areas of natural sciences, finance, economics, and many more applied disciplines. 
By including random forcing terms, SPDEs also account for microscopic scaling limits or model misspecification.
We will focus on the important subclass of \emph{stochastic convection-diffusion} or \emph{advection-diffusion equations} which can also serve as a basis for more complex models. 
They describe the movement of quantities (such as particles, heat, energy, etc.) in a physical system and find applications in, but are not limited to, weather forecasts \cite{sigrist_stochastic_2015,sigrist_dynamic_2012, liu_statistical_2021}, neuronal responses \cite{walsh_stochastic_1981,Tuckwell2013}, solar radiation \cite{clarotto_2022}, air quality \cite{liu_statistical_2016}, sediment concentrations \cite{stroud_2010}, biomass distributions \cite{denaro_2013}, groundwater flows \cite{serrano_1990}, and term structure movements \cite{cont_modeling_2005}.

More specifically, for a finite time horizon $T$, we consider the solution $X=(X(t))_{0\leq t\leq T}$ to the linear parabolic SPDE
\begin{equation} \label{eq: SPDE}
	\begin{cases} \diff X(t)=A_\vartheta X(t) \diff t+ \diff W(t), & t\in (0,T],\\ X(0)=X_0\in L^2(\Lambda), &\\ 
		X(t)|_{\partial\Lambda}=0,& t \in (0,T],\end{cases}
\end{equation}
on a bounded open domain $\Lambda\subset\R^d$ with $C^2$-boundary $\partial\Lambda$, Dirichlet boundary conditions, driven by a cylindrical Brownian motion $W=(W(t))_{0\le t\le T}$. 
The second-order elliptic operator $A_\theta$ appearing in \eqref{eq: SPDE} is specified as
\begin{equation}\label{eq: opA}
	A_\theta z=a\Delta z+\theta\cdot\nabla z+cz,
\end{equation}
where $a>0$, $\theta$ and $c$ represent the (constant) diffusivity, the velocity field and the reaction coefficient, inducing spatial diffusion, transportation and damping, respectively. 
While the analytical theory of SPDEs is well understood and established, see, e.g., \cite{da_prato_stochastic_2014,lototsky_SPDEs_2017,liu_stochastic_2015, hairer_introduction_2009}, the literature on their statistical aspects is somewhat limited, and many research questions are still open.
As a concrete example, to the best of our knowledge, estimation of a function-valued velocity has not yet been investigated. 
We want to fill in this gap by estimating the function-valued velocity field $\theta$ by means of nonparametric methods based on local measurements. 

Parameter estimation for SPDEs is widely studied in the literature, but primarily devoted to a scalar parameter in $A_\theta=\theta A+B$ for some (non-) linear operators $A$ and $B$. 
When $A$ and $B$ share a common system of eigenvectors and are self-adjoint, \cite{huebner_asymptotic_1995} constructed a maximum likelihood estimator (MLE) for $\theta$, relying on $N$ spectral measurements $(\sc{X(t)}{e_i})_{0\leq t\leq T}$, $i=1,\dots,N$, with an eigenbasis $(e_i)_{i\geq1}$.
Given some relation between the differential order of $A$ and $B$ and the dimension $d$, the derived MLE was shown to be consistent and asymptotically normal. This so-called spectral approach was subsequently adapted and extended to different settings, such as nonlinear SPDEs \cite{cialenco_parameter_2011,pasemann_drift_2020}, fractional noise \cite{cialenco_asymptotic_2009}, or joint parameter estimation \cite{lototsky_parameter_2003}. However, the majority of these studies considered the case where $\theta$ specifies the highest order operator, i.e., $\operatorname{ord}(A)>\operatorname{ord}(B)$, and there is no known estimator for a constant transport coefficient $\theta$ in \eqref{eq: opA} in the spectral approach setting.
Based on discrete observations $X(t_j,x_i)$ in time and space, \cite{hildebrandt_parameter_2019,kaino_parametric_2021,tonaki2022parameter} analysed power variations and contrast estimators in dimension one and two for all occurring quantities in the parametric version of \eqref{eq: opA}. Reaction or source-sink terms have been studied, for example, by \cite{hildebrandt2021nonparametric,gaudlitz2022estimation,ibragimov_2001}. We refer to \cite{cialenco_statistical_2018} for a detailed overview of further related literature.

In this paper, we construct a pointwise estimator $\hat\theta(x)$ for the velocity field $\theta$, evaluated at the spatial location $x\in\Lambda$ from local measurement processes for multiple locations, i.e., our data are given by observing the solution to \eqref{eq: SPDE} locally in space and continuously in time. 
Given some fixed function $K\in H^2(\R^d)$ with compact support, we consider points $x_1,\dots,x_N\in\Lambda$ and a resolution level $\delta>0$ small enough such that the localised functions $K_{\delta,x_k}(x)=\delta^{-d/2}K(\delta^{-1}(x-x_k))$, $k=1,\ldots,N$, are supported in $\Lambda$. 
In optical systems, they are known as \emph{point spread functions} \cite{aspelmeier_modern_2015,backer_extending_2014}, and they describe the physical limitation that $X(t_j,x_i)$ can only be measured up to some locally blurred average, i.e., a convolution with the point spread function.
Specifically, the local measurements of $X$ are given as the continuously observed processes $X_{\delta},X_{\delta}^\Delta\in L^2([0,T];\R^N)$ and $X_{\delta}^\nabla\in L^2([0,T];\R^{d\times N})$, where, for $k=1,\ldots,N$,
\begin{equation}\begin{split}\label{eq: locob}
		(X_\delta)_k&=X_{\delta,k}=(\sc{X(t)}{K_{\delta,x_k}})_{0\leq t\leq T},\\
		(X_\delta^\nabla)_k&=X^\nabla_{\delta,k}=(\sc{X(t)}{\nabla K_{\delta,x_k}})_{0\leq t\leq T},\\
		(X_\delta^\Delta)_k&=X^\Delta_{\delta,k}=(\sc{X(t)}{\Delta K_{\delta,x_k}})_{0\leq t\leq T}.
	\end{split}
\end{equation}
Local measurements were introduced by \cite{altmeyer_nonparametric_2020}. There, the authors investigated the estimation of a nonparametric diffusivity $a(x)$ and demonstrated that it can already be estimated with the parametric minimax rate $\delta$ upon observing the local information $X_{\delta,x}$. 
The method proved to be robust to low-order nonlinearities, cf.~\cite{altmeyer_parameterSemi_2020,altmeyer_parameter_2020}, and was used in a direct application to cell repolarisation, estimating the diffusivity of the activator in the stochastic Meinhardt model \cite{altmeyer_parameter_2020}. 
Adapting the extended MLE approach of \cite{altmeyer_nonparametric_2020}, \cite{altmeyer_anisotrop2021} have considered the fully anisotropic parametric version of \eqref{eq: opA}, addressing joint estimation of diffusivity, velocity, and reaction components.
In particular, it has been shown that transport and damping coefficients cannot be estimated consistently in finite time if the number $N=N(\delta)$ of local measurements remains finite. 
If the number of measurements is chosen to be maximal, i.e., $N\asymp\delta^{-d}$, the derived convergence rates agree with the convergence rates obtained with the spectral approach of \cite{huebner_asymptotic_1995}. 
In the case of the transport coefficient $\theta$, the convergence rate $N^{-1/2}$ has been proven to be optimal.

Let us briefly describe our main findings. 
We combine the approach of \cite{altmeyer_anisotrop2021} with techniques from nonparametric regression and local likelihood estimation. The contribution of each measurement is individually weighted and controlled by a bandwidth $h=h(\delta)$ to account for bias reduction. The obtained \emph{weighted augmented MLE} $\hat{\theta}_\delta(x)$ is consistent, and under appropriate Hölder smoothness assumptions, it satisfies, for $x\in\Lambda$,
\begin{equation}
	\label{eq: rateintro}
	\hat{\theta}_\delta(x)-\theta(x)=O_{\P}(h^\beta+(Nh^d)^{-1/2}),\quad \beta\in(1,2].
\end{equation}
Optimising \eqref{eq: rateintro} with respect to $h$, we obtain the convergence rate $N^{-\beta/(2\beta+d)}$ known from local linear regression estimation. This convergence rate turns out to be optimal, as we demonstrate by adapting the lower bound ansatz of \cite{altmeyer_anisotrop2021}, which is based on the RKHS of our local measurements and its relation to the Hellinger distance, to the nonparametric framework. 

The paper is structured as follows. We specify the model and construct the estimator in Section \ref{sec: main}.
Section \ref{subs: pointwise} provides upper bounds on the pointwise risk of the estimator, along with a discussion of the involved assumptions and a number of examples and applications.
Lower bounds are stated in Section \ref{sec: optimality}.
All proofs are deferred to Section \ref{sec: proofs}. 

\paragraph{Notation}
Throughout this paper, the time horizon $T<\infty$ is fixed, and we work on a filtered probability space $(\Omega, \mathcal{F},(\mathcal{F}_t)_{0\leq t\leq T},\P)$. 
We write $a\lesssim b$ if $a\leq Mb$ holds for a universal constant $M$ not depending on $\delta$, $N$, $h$, or a spatial location $x\in\Lambda$, and $a\lesssim_sb$ if $a\le Cb$ with a constant $C$ explicitly depending on the quantity $s$. 
Unless otherwise stated, all limits are to be understood as the spatial resolution level $\delta\to 0$. 
For an open set $U\subset\R^d$, $L^2(U)$ is the usual $L^2$ space with inner product $\sc{\cdot}{\cdot}=\sc{\cdot}{\cdot}_{L^2(U)}$. 
The Euclidean inner product and distance of two vectors $a,b\in\R^p$ are denoted by $a\cdot b$ and $|a-b|$, respectively. 
Let $H^k(U)$ denote the usual Sobolev spaces, and denote by $H_0^1(U)$ the completion of $C_c^{\infty}(U)$, the space of smooth compactly supported functions, relative to the $H^1(U)$ norm. 
For a multi-index $\alpha=(\alpha_1,\dots,\alpha_d)$, let $D^{\alpha}$ be the $\alpha$-th weak partial derivative operator of order $|\alpha|=\alpha_1+\dots+\alpha_d$.
The gradient, divergence and Laplace operator are denoted by $\nabla$, $\nabla\cdot$ and $\Delta$, respectively. 
For $\beta>0$, denote by $\mathcal{H}(\beta)$ the space of functions $f\colon \Lambda\to\R$ with continuous derivatives up to order $\lfloor\beta\rfloor$ such that their $\lfloor\beta\rfloor$-th partial derivatives are Hölder continuous with exponent $\beta-\lfloor\beta\rfloor\leq 1$.

\section{Pointwise estimation approach}\label{sec: main}
Our interest is in estimating the velocity coefficient appearing in the second-order linear elliptic differential operator $A_\theta$ as introduced in \eqref{eq: opA} with domain $H^1_0(\Lambda)\cap H^2(\Lambda)$.
For $z\in H^1_0(\Lambda)\cap H^2(\Lambda)$, its adjoint $A_\theta^\ast$ is defined by 
\[
A^\ast_\theta z \coloneqq a\Delta z-\nabla\cdot \theta z+cz=a\Delta z-\theta\cdot\nabla z +(c-\nabla\cdot\theta)z.
\]
Both $A_\theta$ and $A_\theta^\ast$ generate analytic semigroups, denoted by $(S_\theta(t))_{t\geq0}$ and $(S_\theta^\ast(t))_{t\geq0}$, respectively, on $L^2(\Lambda)$.
The weak solution to \eqref{eq: SPDE} is given by
\[
X(t)=S_\theta(t)X_0+\int_0^tS_\theta(t-t')\diff W(t').
\]
As discussed in \cite[Proposition 2.1]{altmeyer_nonparametric_2020}, it only takes values in negative-order Sobolev spaces, but still allows the definition of real-valued centred Gaussian processes $(\sc{X(t)}{z})_{0\leq t\leq T,z\in L^2(\Lambda)}$, satisfying for $z\in H^1_0(\Lambda)\cap H^2(\Lambda)$
\begin{equation}
	\label{eq: weaksol}
	\sc{X(t)}{z}=\sc{X_0}{z}+\int_0^t\sc{X(t')}{A^*_\theta z}\diff t'+\sc{W(t)}{z},\quad 0\leq t\leq T.
\end{equation} 

Our nonparametric analysis relies on Hölder smoothness conditions.
Let $\beta\in(1,2]$. 
We assume that the (possibly unknown) diffusion coefficient $a>0$ is constant, each component $\theta_i\colon\Lambda\to\R$, $i=1,\ldots,d$, of the velocity field $\theta$ is contained in $\mathcal{H}(\beta)$, and the (nuisance) reaction function $c\colon\Lambda\to\R$ belongs to $\mathcal{H}(\beta-1)$. 
Since the differential operator $A_\theta^\ast$ contains the first-order derivative of $\theta$, we require for existence reasons, cf.~also \cite[Proposition 3.5]{altmeyer_nonparametric_2020}, that $\theta$ is continuously differentiable, i.e., $\beta>1$.

Recall that we work in a local measurements framework, i.e., we construct an estimator based on the observations \eqref{eq: locob}. 
Let $W_k(t)\coloneqq \sc{W(t)}{K_{\delta,x_k}}\norm{K}_{L^2(\R^d)}^{-1}$ be scalar Brownian motions. 
Each local measurement forms an Itô process with initial condition $X_{\delta,k}(0)=\sc{X_0}{K_{\delta,x_k}}$ and, using \eqref{eq: weaksol}, 
\begin{equation}
	\label{eq: process xdelta}
	\diff X_{\delta,k}(t)=\sc{X(t)}{A_\theta^\ast K_{\delta,x_k}}\diff t+\norm{K}_{L^2(\R^d)}\diff W_k(t),\quad k=1,\ldots,N.
\end{equation}

Before constructing an estimator for $\theta(x)$, we give a brief recap on the construction of local polynomial log-likelihood estimators. 
The following is based on \cite{loader_1999}. 
We also refer to \cite{Fan_1996,Fan_1998,Ruppert_1994} for further discussion of the local likelihood approach and to the monograph \cite{Wasserman_2006} for an overview of general nonparametric techniques.
Suppose we observe response variables 
\[
Y_i\sim f(\cdot,\mu(x_i)),\quad i=1,\ldots, n,
\]
with density $f$ depending on the design points $x_i\in\R$ via an unknown function $\mu$. 
Simple examples are given by nonparametric regression, where $Y_i=\mu(x_i)+\varepsilon_i$, with $\varepsilon_i\sim\mathcal{N}(0,\sigma^2)$,
or logistic regression, where $P(Y_i=1)=p(x_i)$, $P(Y_i=0)=1-p(x_i)$, and we consider the link function $\mu(x_i)=\log(p(x_i)/(1-p(x_i))$.
Assuming that $\mu(x)$ has a polynomial fit of degree $p\in\mathbb{N}_0$, i.e., by a $p$-th order Taylor approximation,
\[
\mu(x_i)\approx\sum_{j=0}^p\frac{a_j(x_i-x)^j}{j!}=\sc{a}{A(x_i-x)}_{\R^{p+1}},
\]
where $a=(a_0,\dots,a_p)^\top$, $A(y)=(1,y,\dots,y^p/p!)^\top$, the basic idea is to maximise the local polynomial log-likelihood
\begin{equation}
	\label{eq: localpolylike}
	l(\mu,x)=\sum_{i=1}^nw_i(x)\log\left(f(Y_i,\sc{a}{A(x_i-x)}_{\R^{p+1}})\right)
\end{equation}
over $a\in\R^{p+1}$ and with weight functions $w_i$, $i=1,\ldots,n$. 
The estimator for ${\mu}(x)$ is then given by $\hat{\mu}(x)=\hat{a}_0$. 
For a smoothing parameter $h$ (bandwidth), only observations within a given window $(x-h,x+h)$ are used, and each observation in \eqref{eq: localpolylike} is weighted by $w_i(x)=\mathcal{W}\left(\frac{x_i-x}{h}\right)$.
Often, $\mathcal{W}$ is chosen as a positive kernel function, but in principle it can be more general. 
This approach can be extended to the multivariate case (cf., amongst others, \cite{Aerts_1997,Ruppert_1994,Fan_1996}), and is also close to local polynomial regression (cf.~\cite{tsybakov_introduction_2008,Gyoerfi_2002}) as a generalisation of it. 
We will adapt the above method in the next section to construct a nonparametric estimator for $\theta(x)$.

\paragraph{The weighted augmented MLE}
The local observation processes $X_{\delta}$, $X^\nabla_{\delta}$ and $X^\Delta_\delta$ as introduced in \eqref{eq: locob} are no longer Markovian, as the time evolution at the point $x_k$, $k\in\{1,\ldots,N\}$, depends on the entire spatial structure of $X$. 
Therefore, a general Girsanov theorem for multivariate Itô processes, cf.~\cite[Section 7.6]{liptser_statistics_2001}, results in the modified log-likelihood
\begin{equation}	\label{eq: loglike}
	\norm{K}^{-1}_{L^2(\R^d)}\sum_{k=1}^{N}\bigg(\int_{0}^{T}\sc{X(t)}{A_\theta^\ast K_{\delta,x_k}}\diff X_{\delta,k}(t)-\frac{1}{2}\int_{0}^{T}\sc{X(t)}{A_{\theta}^\ast K_{\delta,x_k}}^2\diff t\bigg),
\end{equation}
provided the driving Brownian motions ${W}_k$ in \eqref{eq: process xdelta} are independent. 
For parametric $\theta$, it is straightforward to derive an estimate based on the observed processes by maximising \eqref{eq: loglike}, as shown in \cite{altmeyer_anisotrop2021}. 
In our set-up, we assume instead that we can approximate $\theta$ locally by a constant, i.e., for some $\gamma\in\R^d$, 
\[\sc{X(t)}{A^*_\theta K_{\delta,x_k}}\approx aX^\Delta_{\delta,k}(t)+\gamma^\top X^{\nabla}_{\delta,k}(t), \quad 0\le t\le T.
\]
Note that approximations by a polynomial of degree $p\geq1$ result in additional observations, which we do not have access to and which cannot be recovered by convolution and a finite difference scheme, see Remark \ref{rmk: higher order} below. 
Therefore, we restrict our investigations to the local constant approximation. 

Note further that we cannot use the local likelihood approach introduced before directly since $\theta$ is incorporated in $\sc{X(t)}{A_\theta^\ast K_{\delta,x_k}}$ via $\sc{X(t)}{\nabla\cdot\theta K_{\delta,x_k}}\neq \theta(x_k)^\top X^\nabla_{\delta,k}(t)$. 
Instead, we adapt the underlying idea by weighting the contribution of the $k$-th summand individually. 
Hence, we maximise  
\begin{equation*}
	\begin{split}
		l_\delta(\gamma,x)&=\norm{K}^{-1}_{L^2(\R^d)}\sum_{k=1}^{N}w_k(x)\left(\int_{0}^{T}\right.\left(aX^\Delta_{\delta,k}(t)-\gamma^\top X^\nabla_{\delta,k}(t)\right)\diff X_{\delta,k}(t)\\
		&\hspace*{11em}-\left.\frac{1}{2}\int_{0}^{T}\left(aX^\Delta_{\delta,k}(t)-\gamma^\top X^\nabla_{\delta,k}(t)\right)^2\diff t\right)
	\end{split}
\end{equation*}
over $\gamma\in\R^d$ to derive the \emph{weighted augmented MLE}, given by
\begin{equation}
	\label{eq: weighted est1}
	\hat{\theta}_\delta(x)=-(\mathcal{I}^x_{\delta})^{-1}\sum_{k=1}^{N}w_k(x)\left(\int_{0}^{T}X^\nabla_{\delta,k}(t)\diff X_{\delta,k}(t)-\int_{0}^{T}aX^\Delta_{\delta,k}(t)X^\nabla_{\delta,k}(t)\diff t\right)
\end{equation}
with the \emph{weighted observed Fisher information}
\begin{equation*}
	\label{eq: weighted fisher}
	\mathcal{I}^x_{\delta}=\sum_{k=1}^{N}w_k(x)\int_{0}^{T}X^\nabla_{\delta,k}(t)X^\nabla_{\delta,k}(t)^\top \diff t.
\end{equation*}

\begin{remark}[higher order approximations]\label{rmk: higher order}
Intuitively, approximating $\theta$ by a polynomial of higher order should perform an automatic bias correction and should thus improve the quality of the estimation. However, due to the spatial influence of $\theta$ in $\sc{X(t)}{A^*_\theta K_{\delta,x_k}}$, i.e., in 
	$$\sc{X(t)}{\nabla\cdot\theta K_{\delta,x_k}},$$
	the log-likelihood \eqref{eq: loglike} depends not only on the pointwise evaluations $\theta(x_k)$, $k=1,\ldots,N$, but rather on $\theta$ and $\nabla\cdot\theta$ in a neighbourhood around $x_k$. While the processes $X^\nabla_{\delta,k}$ and $X^\Delta_{\delta,k}$ can, in principle, be obtained by observing $X_{\delta,y}(t)$ in a neighbourhood of $x_k$, cf.~\cite{altmeyer_nonparametric_2020}, this fails to hold true for the additional observations required for higher order approximations. Even in the simplest case, i.e., a linear approximation of the form $\theta(y)=\gamma+\Gamma(y-x)$, one obtains
	\[
		\sc{X(t)}{\nabla\cdot(\gamma+\Gamma(\cdot-x))K_{\delta,x_k}}=\gamma^\top X^\nabla_{\delta, k}(t)
		+\sc{X(t)}{(\Gamma(\cdot-x))^\top \nabla K_{\delta,x_k}}+\operatorname{tr}(\Gamma)X_{\delta,k}(t).
	\]
	Since $K_{\delta,x_k}$ takes only non-zero values in a neighbourhood around $x_k$, we could instead approximate the non-observable term on the right hand side of the last display, while also ignoring the lower order perturbation, by
	$$\sc{X(t)}{\nabla\cdot(\gamma+\Gamma(\cdot-x))K_{\delta,x_k}}\approx \gamma^\top X^\nabla_{\delta, k}(t)+(\Gamma(x_k-x))^\top X^\nabla_{\delta,k}(t). $$
	Extending this idea to arbitrary polynomial approximations yields an estimate of $\theta$ and its partial derivatives at point $x.$ The analysis of this estimator, however, is nonstandard and seems to provide only limited, if any, improvement over $\hat{\theta}_\delta(x)$ in \eqref{eq: weighted est1} as its resulting bias component also depends on the approximation error within the accessible and inaccessible observations which restricts the usage of arbitrary Hölder regularity.
\end{remark}

\section{Convergence in probability: Upper bound results}\label{subs: pointwise}
Plugging \eqref{eq: process xdelta} into \eqref{eq: weighted est1} yields the decomposition 
\begin{equation}
	\label{eq: est2}
	\hat{\theta}_\delta(x)=\theta(x)+(\mathcal{I}^x_{\delta})^{-1}\mathcal{R}^x_{\delta}-(\mathcal{I}^x_{\delta})^{-1}\mathcal{M}^x_{\delta}\norm{K}_{L^2(\R^d)},
\end{equation}
where the martingale term and the remainder, respectively, are specified as
\begin{align*}
	\mathcal{M}^x_{\delta}&=\sum_{k=1}^{N}w_k(x)\int_{0}^{T}X^\nabla_{\delta,k}(t)\diff W_k(t),\\
	\mathcal{R}^x_{\delta}&=\sum_{k=1}^{N}w_k(x)\int_{0}^{T}X^\nabla_{\delta,k}(t)\sc{X(t)}{((\theta-\theta(x)) \cdot\nabla+\varphi_\theta)K_{\delta,x_k}}\diff t,
\end{align*}
with $\varphi_\theta\coloneqq\nabla\cdot\theta-c\in\mathcal{H}(\beta-1)$.
The following assumption gathers technical conditions required for our statistical analysis.
\begin{customass}{L}\,
	Assume that the following conditions are satisfied:
	\begin{enumerate}[label=(\roman*)]
		\label{ass: total}
		\item 
		The locations $x$ and $x_1,\dots,x_N$ belong to a fixed compact set $\mathcal{J}\subset\Lambda$, which is independent of the resolution $\delta$ and $N$. 
		There exists $\delta'>0$ such that $\mathrm{supp}(K_{\delta,x_k})\cap\mathrm{supp}(K_{\delta,x_l})=\emptyset$ for $k\neq l$, $k,l\leq N$, and all $\delta\leq\delta'$. 
		\item 
		There exists a compactly supported function $\bar{K}\in H^4(\R^d)$, which is either even or odd, such that $K=(-\Delta)\bar{K}$.
		\item 
		Given $h>0$, there exist weight functions $w_k=w_k(N,h,x_1,\dots,x_N)\colon\mathcal{J}\to\R$, depending only on $N,h,x_1,\dots,x_N$, fulfilling the following conditions for a universal constant $C_*$ not depending on $N$, $\delta$, $h$ and $x$: 
		\begin{enumerate}
			\item [(1)] $\sup_{k\in[N], x\in\mathcal J}|w_k(x)| \leq C_*(Nh^d)^{-1}$;
			\item [(2)] $\sum_{k=1}^{N}|w_k(x)|\leq C_*$;
			\item [(3)] $w_k(x)=0\text{ if }|x_k-x|>h$;
			\item [(4)] $\sum_{k=1}^{N}w_k(x)=1$, and for any $\alpha$ with $|\alpha|=1$, it holds $\sum_{k=1}^N(x_k-x)^\alpha w_k(x)=0$.
		\end{enumerate}
		\item The initial condition $X_0$ is such that either $X_0\in L^p(\Lambda)\cap \mathcal{D}(A_\theta)$, $p>2$, or, if in addition there exists a constant $\gamma<0$ such that $c-\nabla\cdot\theta\leq \gamma$, $X_0=\int_{-\infty}^0S_{\theta}(t')\diff W(t')$.
	\end{enumerate}
\end{customass}

A few comments on the above conditions are in order.
The support condition in Assumption \ref{ass: total}(i) guarantees that the Brownian motions $W_k$ in \eqref{eq: process xdelta} are independent as $\delta\rightarrow 0$, while the processes $X_{\delta,k}$ defined in \eqref{eq: locob} do not inherit independence. 
It requires the measurement locations $x_k$ to be separated by a Euclidean distance of at least $C\delta$ for a fixed constant $C$, which means that $N$ grows at most as $N = O(\delta^{-d})$. 
Existence of weight functions $w_k$ in Assumption \ref{ass: total}(iii) holds true under standard structural assumptions on the locations $x_k$ (see Lemma \ref{lem: existweight} and the subsequent remark below). 
Since the partial derivatives $\partial_iK$, $i=1,\ldots, d$, are mutually independent, condition (4) also implies that $\mathcal{I}_\delta^x$ is $\P$-a.s.~invertible, which can be deduced from \cite[Lemma 2.2]{altmeyer_anisotrop2021}. 
The weights can be constructed similarly to weights in local polynomial regression, cf.~for instance \cite[Chapter 1.6]{tsybakov_introduction_2008}, such that they are reproducing of order one. 
Assumption \ref{ass: total}(iv) guarantees that a general initial condition is asymptotically neglectable. 
If $\theta=0$, it can be further relaxed such that $\gamma=0$ is allowed, i.e., $X_0=\int^0_{-\infty}S_\theta(t')\diff W(t')$ is, for instance, also valid for $A_\theta=a\Delta$.
Despite using the local constant (LP$(0)$) approach, we will show that $\hat{\theta}_\delta(x)$ achieves the convergence rate of an LP$(1)$-estimator. 
While in local polynomial regression, this is known to happen for the Nadaraya--Watson estimator if, for instance, one works with equidistant design points $x_k$ and estimates at one of those locations, see Example \ref{ex: weights}, we only rely on a first-order multivariate Taylor expansion and use the reproducing property of the weights as well as (anti-) symmetry of $\nabla K$, implied by Assumption \ref{ass: total}(ii). 
Depending on more information about the initial condition and the dimension $d$, Assumption \ref{ass: total}(ii) can also be softened.

A precise control of the error decomposition \eqref{eq: est2} results in consistency of the estimate $\hat{\theta}_\delta(x)$ as the resolution level $\delta$ tends to zero. As known from the parametric case, cf.~\cite{altmeyer_anisotrop2021}, consistent estimation in finite time $T$ of the velocity $\theta$  naturally requires $N=N(\delta)\rightarrow\infty$ as $\delta\rightarrow0$. 
On the other hand, the bandwidth $h\rightarrow0$ is usually chosen in dependence on the number $N$ of observations to balance between bias and variance terms. In that case, $h$ is implicitly also related to $\delta$. 

\begin{theorem}
	\label{thm: maintheorem}
	Under Assumption \ref{ass: total}, the weighted augmented MLE satisfies 
	\begin{equation}
		\label{eq: mainresult}
		\hat{\theta}_\delta(x)-\theta(x)=O_{\P}(h^{\beta}+(Nh^d)^{-1/2}),\quad \beta\in(1,2].
	\end{equation}
 In particular, this bound is independent of the spatial location $x\in\mathcal{J}$ in the sense that, for any $\varepsilon>0$, there exist some $M>0$, $\delta'>0$ such that, for any $x\in\mathcal{J}$ and for any $\delta\leq\delta'$, we have
 \begin{equation}
     \label{eq: extraresult}
     \P\left(\big|\hat{\theta}_\delta(x)-\theta(x)\big|\big(h^\beta+(Nh^d)^{-1/2}\big)^{-1}>M\right)\leq\varepsilon.
 \end{equation}
\end{theorem}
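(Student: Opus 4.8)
The plan is to control each of the two terms on the right-hand side of the decomposition \eqref{eq: est2}, namely $(\mathcal{I}^x_{\delta})^{-1}\mathcal{R}^x_{\delta}$ (the bias) and $(\mathcal{I}^x_{\delta})^{-1}\mathcal{M}^x_{\delta}\norm{K}_{L^2(\R^d)}$ (the stochastic term), after first establishing matching two-sided bounds on the weighted observed Fisher information $\mathcal{I}^x_{\delta}$. For the latter, I would show that $\mathcal{I}^x_{\delta}$ is, up to constants and with high probability, comparable to $\delta^{-2}\sum_k w_k(x) \cdot (\text{const})$, i.e.\ of order $\delta^{-2}$ uniformly in $x$ --- using the scaling $X^\nabla_{\delta,k}(t)=\sc{X(t)}{\nabla K_{\delta,x_k}}$, the rescaling identity $\nabla K_{\delta,x_k}(\cdot)=\delta^{-1}(\nabla K)_{\delta,x_k}(\cdot)$, and the known asymptotics of $\int_0^T X^\nabla_{\delta,k}(t)X^\nabla_{\delta,k}(t)^\top\diff t$ from the parametric theory of \cite{altmeyer_anisotrop2021,altmeyer_nonparametric_2020}, together with Assumption~\ref{ass: total}(iii)(4) ensuring $\sum_k w_k(x)=1$ and $\P$-a.s.\ invertibility. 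The disjoint-support condition (i) makes the summands over $k$ behave like sums of (asymptotically) independent contributions, so laws of large numbers / concentration give uniform control over $x\in\mathcal{J}$ via a covering argument combined with equicontinuity of $x\mapsto w_k(x)$ and the finite overlap ($w_k(x)=0$ unless $|x_k-x|\le h$, so at most $O(1)$ of the $x_k$ contribute once we also use the $C\delta$-separation).

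For the bias term $\mathcal{R}^x_{\delta}$, the key is the Taylor expansion of $\theta$ around $x$: since $\theta_i\in\mathcal{H}(\beta)$ with $\beta\in(1,2]$, on the support of $K_{\delta,x_k}$ (which lies within $|\cdot-x_k|\lesssim\delta$ and, because $w_k(x)\ne0$ forces $|x_k-x|\le h$, within $|\cdot-x|\lesssim h+\delta\lesssim h$) we have $\theta(y)-\theta(x)=\nabla\theta(x)^\top(y-x)+O(|y-x|^\beta)$. Plugging this in, the linear term $\nabla\theta(x)^\top(x_k-x)$ is killed by the reproducing property $\sum_k (x_k-x)^\alpha w_k(x)=0$ from (iii)(4), while the "$(y-x_k)$'' part of the linear term integrates against $\nabla K_{\delta,x_k}$ and is handled by the (anti-)symmetry of $\nabla K$ built into Assumption~\ref{ass: total}(ii) (via $K=-\Delta\bar K$ with $\bar K$ even or odd); what remains is the $O(h^\beta)$ Hölder remainder plus the contribution of $\varphi_\theta\in\mathcal{H}(\beta-1)$, which by the same Taylor-and-reproducing argument is $O(h^{\beta-1}\cdot\delta)=o(h^\beta)$ after accounting for the extra $\delta$ from $\sc{X(t)}{\varphi_\theta K_{\delta,x_k}}$ against $X^\nabla_{\delta,k}$ of order $\delta^{-1}$. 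Combining with $\mathcal{I}^x_{\delta}\asymp\delta^{-2}$ and $\sum_k|w_k(x)|\lesssim1$, and noting $X^\nabla_{\delta,k}\sc{X(t)}{(\cdot)K_{\delta,x_k}}$ has size $\asymp\delta^{-1}\cdot\delta^{-1}\cdot\delta=\delta^{-1}$ integrated in $t$, one gets $(\mathcal{I}^x_{\delta})^{-1}\mathcal{R}^x_{\delta}=O_\P(h^\beta)$ uniformly over $\mathcal{J}$.

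For the martingale term, I would bound $\mathcal{M}^x_{\delta}=\sum_k w_k(x)\int_0^T X^\nabla_{\delta,k}(t)\diff W_k(t)$ via its quadratic variation: $\langle\mathcal{M}^x_{\delta}\rangle=\sum_k w_k(x)^2\int_0^T X^\nabla_{\delta,k}(t)X^\nabla_{\delta,k}(t)^\top\diff t$, which using (iii)(1) ($|w_k(x)|\lesssim (Nh^d)^{-1}$), (iii)(3) (only $O(Nh^d)$ indices contribute --- or more precisely the number of $x_k$ with $|x_k-x|\le h$ is at most $C_* Nh^d\cdot(Nh^d)$... cleaner to just use $\sum_k|w_k(x)|\lesssim1$ and $\sup_k|w_k(x)|\lesssim(Nh^d)^{-1}$ to get $\sum_k w_k(x)^2\lesssim(Nh^d)^{-1}$), and the order-$\delta^{-2}$ bound on each $\int_0^T X^\nabla_{\delta,k}X^{\nabla\top}_{\delta,k}\diff t$, is of order $(Nh^d)^{-1}\delta^{-2}$. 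Hence $\mathcal{M}^x_{\delta}=O_\P(\delta^{-1}(Nh^d)^{-1/2})$, and after multiplying by $(\mathcal{I}^x_{\delta})^{-1}\asymp\delta^2$ we obtain $(\mathcal{I}^x_{\delta})^{-1}\mathcal{M}^x_{\delta}=O_\P((Nh^d)^{-1/2})$. The uniformity statement \eqref{eq: extraresult} then follows by upgrading these $O_\P$ bounds to uniform-in-$x$ statements: the bias bound is already deterministic-in-$x$ given the high-probability event for $\mathcal{I}^x_\delta$; for the martingale term I would use a chaining/covering argument over $\mathcal{J}$ (finite covering number, each ball of radius $\sim\delta$ or $\sim h$), together with a maximal inequality (e.g.\ Burkholder--Davis--Gundy for the increments $\mathcal{M}^x_\delta-\mathcal{M}^{x'}_\delta$, controlling the modulus of continuity of $x\mapsto w_k(x)$ and of the finitely-many relevant $X^\nabla_{\delta,k}$), to conclude that $\sup_{x\in\mathcal{J}}|\mathcal{M}^x_\delta|$ obeys the same rate. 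The main obstacle I anticipate is precisely this \emph{uniformity over $\mathcal{J}$}: one must simultaneously control $(\mathcal{I}^x_\delta)^{-1}$ (a random matrix inverse) and the martingale term over a continuum of $x$, which requires either explicit regularity of the weight functions $w_k(\cdot)$ (only Lipschitz-type control is available a priori, so the covering must be fine enough, of mesh $\lesssim h$) or a smart reduction to the finitely many "active'' indices $k$ with $|x_k-x|\lesssim h$; getting the constants in \eqref{eq: extraresult} to be genuinely independent of $x$, $\delta$, $N$, and $h$ is the delicate point, and I would lean on the uniform moment bounds for local measurements from \cite{altmeyer_nonparametric_2020,altmeyer_anisotrop2021} to push it through.
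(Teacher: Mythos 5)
Your overall architecture is the paper's: use the decomposition \eqref{eq: est2}, show the weighted Fisher information is invertible in the limit, bound the martingale via its quadratic variation using $\sum_k w_k(x)^2\lesssim (Nh^d)^{-1}$, and kill the linear Taylor term of $\theta$ through the reproducing property of the weights together with the (anti-)symmetry of $\nabla K$, leaving the Hölder remainder $h^\beta$. However, your order-of-magnitude bookkeeping for the local measurements is wrong, and it matters. You claim $\mathcal{I}^x_\delta\asymp\delta^{-2}$ and that each $\int_0^T X^\nabla_{\delta,k}(t)X^\nabla_{\delta,k}(t)^\top\diff t$ is of order $\delta^{-2}$. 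In fact $\int_0^T\mathrm{Cov}\bigl(\sc{X(t)}{(\partial_iK)_{\delta,y}}\bigr.$, $\bigl.\sc{X(t)}{(\partial_jK)_{\delta,y}}\bigr)\diff t\asymp\delta^{2}$ (Lemma \ref{lem: ConvFisher}(ii)), which exactly cancels the $\delta^{-2}$ coming from $\nabla K_{\delta,x_k}=\delta^{-1}(\nabla K)_{\delta,x_k}$; hence each such integral is $O_{\P}(1)$, $\mathcal{I}^x_\delta\stackrel{\P}{\to}\Sigma$ with $\Sigma_{ij}=\frac{T}{2a}\sc{(-\Delta)^{-1}\partial_iK}{\partial_jK}$ a fixed invertible matrix (Proposition \ref{prop: fisher}), and $[\mathcal{M}^x_\delta]_T=O_{\P}((Nh^d)^{-1})$ directly (Proposition \ref{prop: quadvar}). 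With your scalings the stochastic term does not even close arithmetically: $\delta^{2}\cdot\delta^{-1}(Nh^d)^{-1/2}=\delta(Nh^d)^{-1/2}$, not the claimed $(Nh^d)^{-1/2}$, which signals that the $\delta$-powers you assign to the Fisher information and to the quadratic variation are inconsistent with each other and with the model.

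The second gap is in the bias term. The cancellation you invoke (reproducing weights plus oddness/evenness of $\partial_iK$) is only valid once the covariance of $X$ — which involves the non-self-adjoint semigroup $S^\ast_{\theta,\delta,x_k}$ — has been replaced by the symmetric heat semigroup $\e^{ta\Delta}$ on $\R^d$; the symmetry argument has no direct counterpart for $S^\ast_{\theta,\delta,x_k}$. Quantifying this replacement with error $o(h^\beta)$ is where most of the paper's work lies (Feynman--Kac bounds, variation of parameters, Lemma \ref{lem: shiftsemigroup}), and your sketch does not address it. Moreover, $\mathcal{R}^x_\delta$ is random: besides $\E[\mathcal{R}^x_\delta]=O(h^\beta)$ one needs $\mathrm{Var}(\mathcal{R}^x_\delta)=O(h^{2\beta})$ (via Wick's theorem, Proposition \ref{prop: varrest}) and a reduction of general initial conditions to $X_0=0$ (Proposition \ref{prop: splittingrest}); you treat the bias as essentially deterministic. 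Finally, the uniformity statement \eqref{eq: extraresult} is pointwise in $x$ with constants uniform over $\mathcal{J}$; it follows from sup-over-$x$ moment bounds plus Chebyshev and a continuity argument for $(\mathcal{I}^x_\delta)^{-1}$, so the chaining/BDG machinery you anticipate as the "main obstacle" is not needed for what is actually claimed.
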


To achieve consistency in the first place, the above result implies that $Nh^d\to\infty$ is required.
Hence, it can only hold if $\delta\ll h$, since Assumption \ref{ass: total}(i) imposes at most $N\asymp\delta^{-d}$ measurement locations.

\begin{remark}[convergence rate]
	Optimising the upper bound stated in \eqref{eq: mainresult} with respect to the bandwidth $h$ yields
	\begin{equation}
		\label{eq: orderh1}
		h\asymp N^{-1/(2\beta+d)},\quad \text{ that is, }\quad h^\beta\asymp N^{-\beta/(2\beta+d)},\quad \beta\in(1,2],
	\end{equation}
	thus matching the standard rates for the mean-squared error in nonparametric regression. The usual bias-variance trade-off, resulting from choosing suboptimal $h$, is illustrated in Figure \ref{fig: sim}. 
	For a maximal choice $N\asymp\delta^{-d}$, the optimal bandwidth specification gives
	\begin{equation}
		\label{eq: orderh2}
		h\asymp \delta^{d/(2\beta+d)},\quad \text{ that is, }\quad h^\beta\asymp\delta^{\beta d/(2\beta+d)},\quad \beta\in(1,2].
	\end{equation}
	A graphical illustration in $d=1$ for $\beta=2$, i.e., $h^{\beta}\asymp\delta^{2/5}$, is given in Figure \ref{fig: sim}. As demonstrated in Section \ref{sec: optimality}, the rates in \eqref{eq: orderh1} and \eqref{eq: orderh2} are optimal. 
\end{remark}
Naturally, one may ask if the rate in \eqref{eq: orderh1} also holds true under higher order Hölder regularity assumptions. Indeed, Theorem \ref{thm: maintheorem} might, in principle, be extended to arbitrary $\beta>2$, using reproducing weights functions $w_k$ of order $\lfloor\beta\rfloor$ instead. The analysis of the remainder $\mathcal{R}_\delta^x$ in Section \ref{sec: restterm}, however, indicates that its order is not determined by the bandwidth $h$ and smoothness parameter $\beta$ alone, yet also dependent on the resolution level $\delta.$ In particular, 
$$\mathcal{R}_\delta^x=O_{\P}(h^\beta+\delta h+\delta^2).$$
Thus, the dominating term varies, depending on the dimension $d$ and the assumed smoothness $\beta$. 
If $\beta\leq2$, the remainder is always of order $h^\beta$ whilst the parametric order $\delta^2$ can be achieved for $d\rightarrow\infty$ and arbitrary $\beta\geq2$. 
This matches the observations made in \cite{altmeyer_nonparametric_2020,altmeyer_anisotrop2021}, where the bias term does neither depend on the time horizon $T$, the diffusivity $a$, nor the number of spatial observations $N$. 
As a consequence, arbitrary $\beta>1$ allow for the dimension-improving convergence rates $\delta^{\beta d/(2\beta+d)}\vee \delta^{2}$. 
This phenomenon, however, is no contradiction to the curse of dimensionality stated in \eqref{eq: orderh1} as it results by reparametrisation of $N$. 
Nonetheless, it is in harmony with the central limit theorem \cite[Theorem 2.3]{altmeyer_anisotrop2021} which also yields a better rate if $N$ is chosen maximal.
\begin{figure}[h!]
	\centering
	\begin{subfigure}{.49\textwidth}
		\centering\includegraphics[width=1.\linewidth]{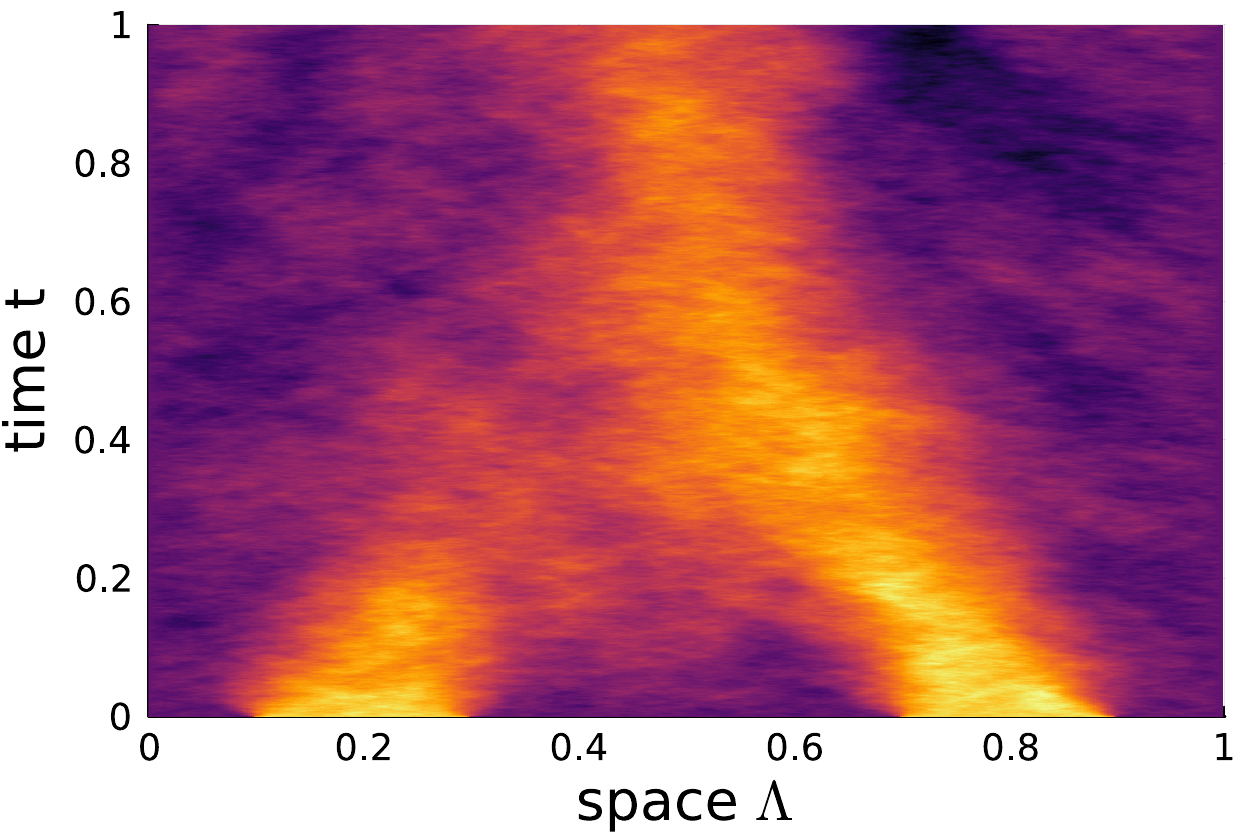}
	\end{subfigure}
	\begin{subfigure}{.49\textwidth}
		\centering\includegraphics[width=1.\linewidth]{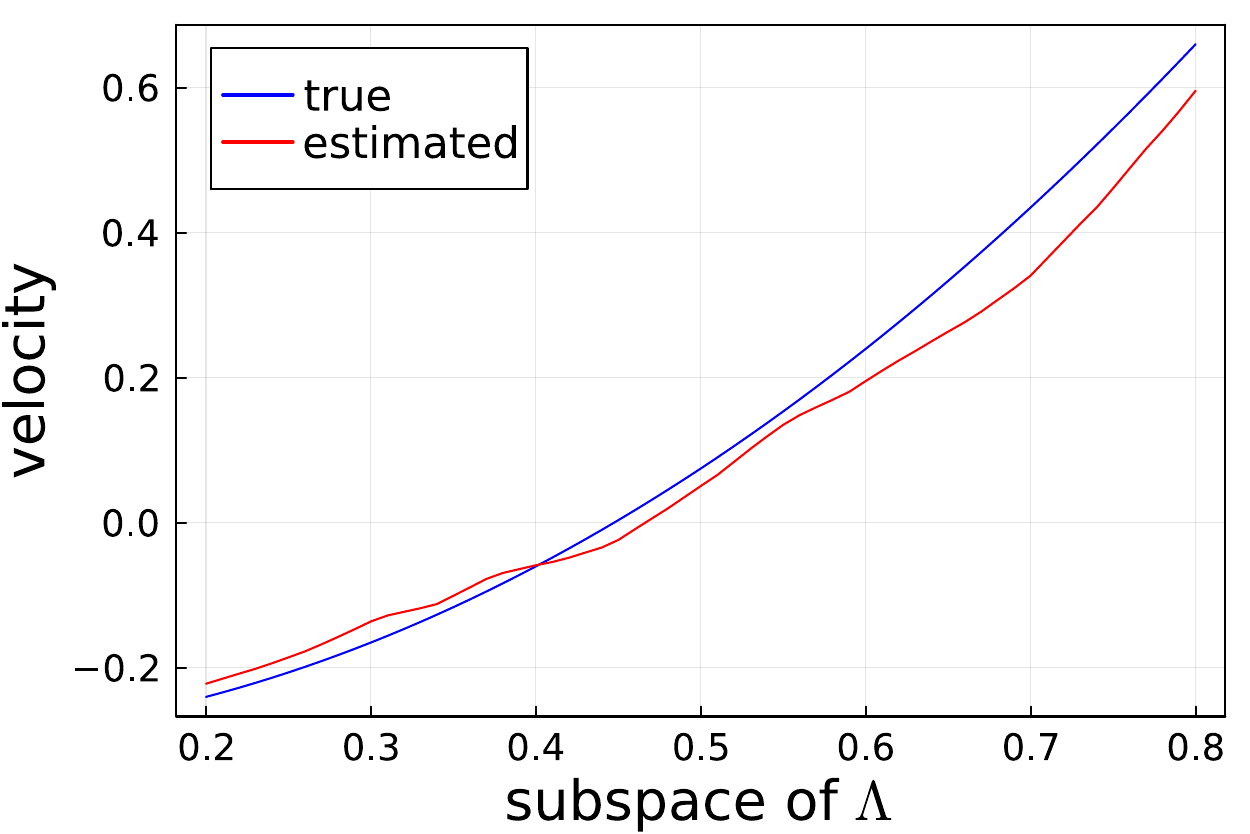}
	\end{subfigure}
	\begin{subfigure}{.49\textwidth}
		\centering\includegraphics[width=1.\linewidth]{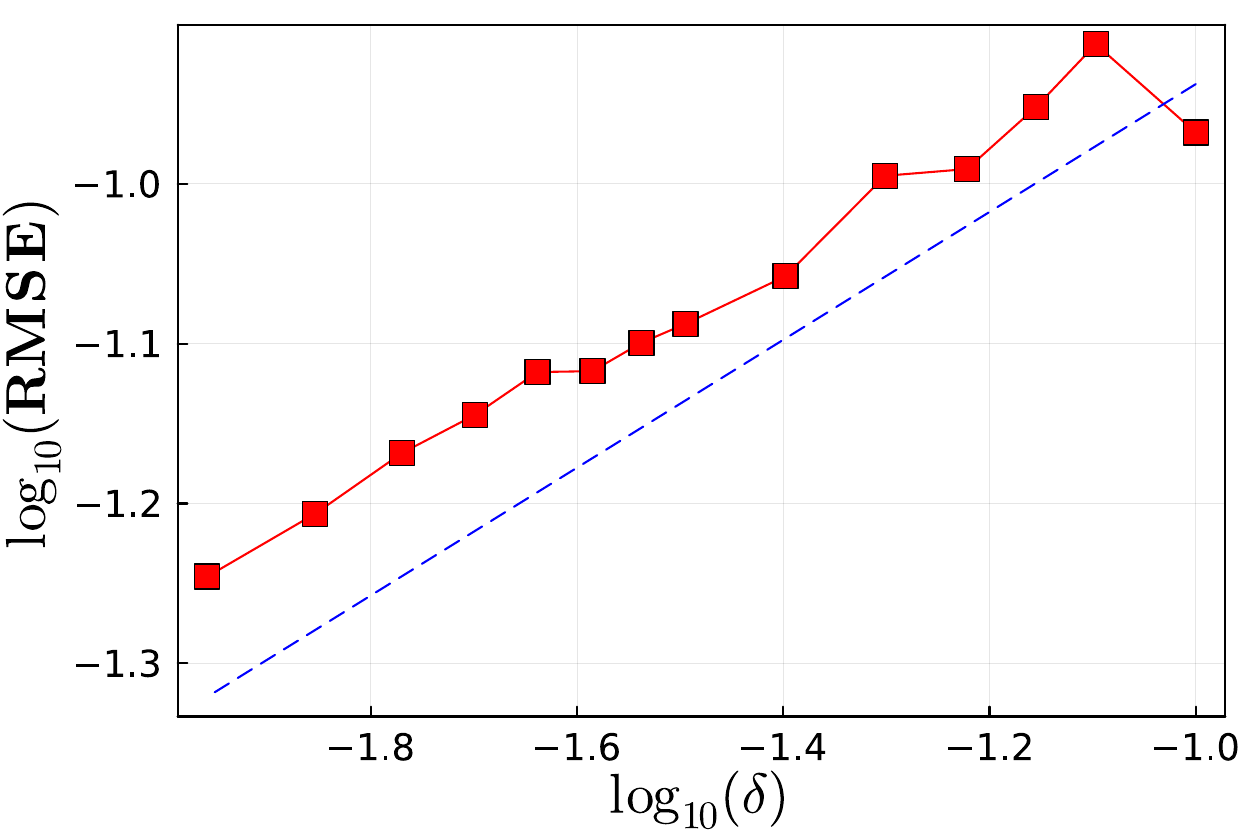}
	\end{subfigure}
	\begin{subfigure}{.49\textwidth}
		\centering\includegraphics[width=1.\linewidth]{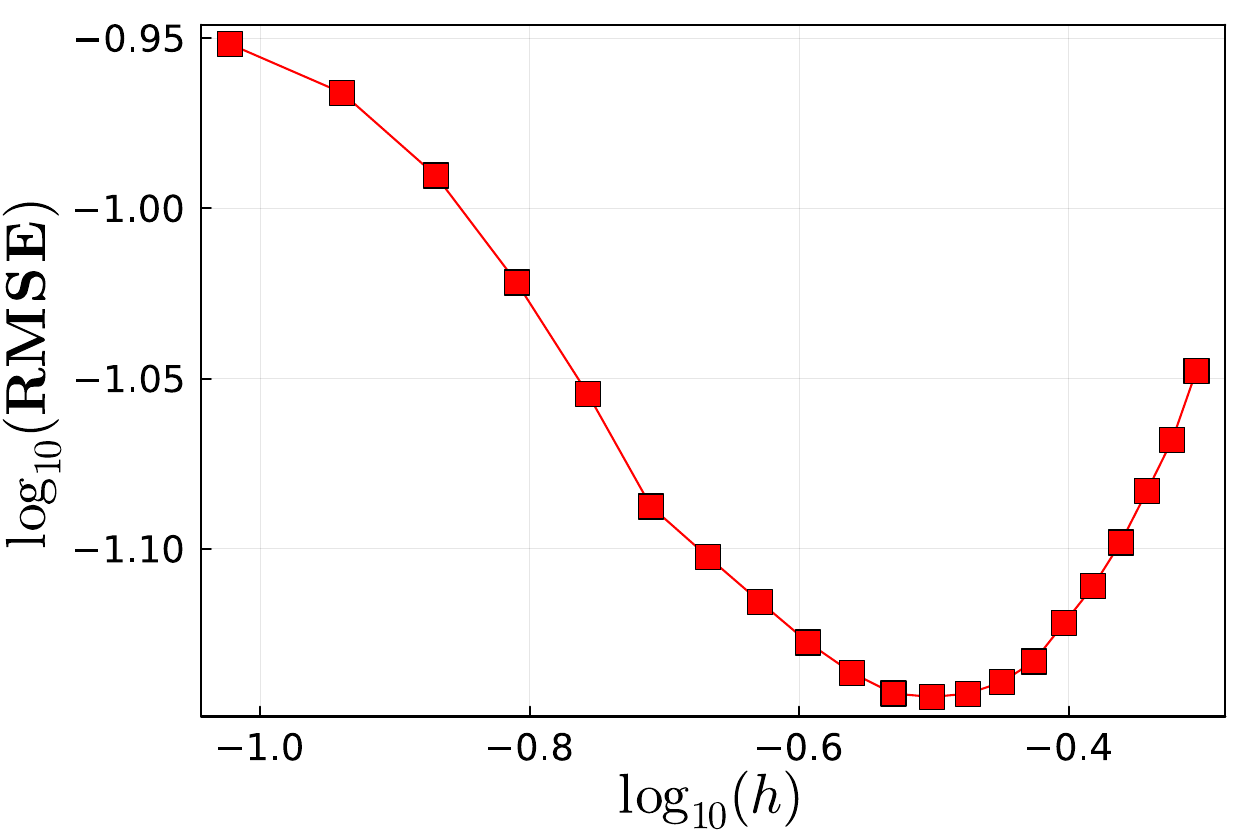}
	\end{subfigure}
	\caption{(top-left) typical realisation of the solution $X(t,x)$ in $d=1$ with domain $\Lambda=(0,1)$; (top-right) trajectory of $\hat{\theta}_\delta(x)$ compared to $\theta(x)=-0.3+1.5x^2$ in the interval $[0.2,0.8]\subset\Lambda$ with weights $w_k(x)$ based on the Epanechnikov kernel; (bottom) $\log$-$\log$ plot of the root mean squared error for estimating $\theta$ at $x=0.5$ with $\delta\rightarrow0$, $h\asymp \delta^{2/5}$ (left); $\delta$ fix, $h\rightarrow0$ (right).}
	\label{fig: sim}
\end{figure}

A second extension of Theorem \ref{thm: maintheorem} involves the diffusivity $a$. 
While the estimator $\hat{\theta}_\delta(x)$ in \eqref{eq: weighted est1} requires knowledge of this parameter, in general it may be unknown. Replacing thus $a$ by a reasonable estimate $\hat{a}_\delta$ yields another estimator $\tilde{\theta}_\delta(x)$ which achieves the same convergence rate.

\begin{corollary}
	\label{cor: unknown a}
	Grant Assumption \ref{ass: total}, and suppose that the diffusivity $a>0$ is unknown. 
	Define the estimator $\tilde{\theta}_\delta(x)$ similarly to \eqref{eq: weighted est1}, replacing $a$ with an estimate $\hat{a}_\delta$. 
	If $\hat{a}_\delta$ satisfies
	\begin{equation}
		\label{eq: orda}
		\hat{a}_\delta-a\in O_{\P}(h^\beta+(Nh^d)^{-1/2}),
	\end{equation}
	then $\tilde{\theta}_\delta(x)-\theta(x)\in O_{\P}(h^\beta+(Nh^d)^{-1/2})$.
\end{corollary}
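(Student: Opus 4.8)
The plan is to write $\tilde\theta_\delta(x)-\theta(x)=(\hat\theta_\delta(x)-\theta(x))+(\tilde\theta_\delta(x)-\hat\theta_\delta(x))$, to control the first summand by Theorem \ref{thm: maintheorem}, and to show that the second summand is of the same order. The diffusivity $a$ enters \eqref{eq: weighted est1} only through the term $a\int_0^T X^\Delta_{\delta,k}(t)X^\nabla_{\delta,k}(t)\diff t$, and $\hat a_\delta$ is a real-valued estimate not depending on $k$, so subtracting the defining expressions of $\tilde\theta_\delta(x)$ and $\hat\theta_\delta(x)$ yields the exact identity
\begin{equation*}
	\tilde\theta_\delta(x)-\hat\theta_\delta(x)=(\hat a_\delta-a)\,V^x_\delta,\qquad V^x_\delta\coloneqq(\mathcal I^x_\delta)^{-1}\sum_{k=1}^{N}w_k(x)\int_0^T X^\Delta_{\delta,k}(t)X^\nabla_{\delta,k}(t)\diff t .
\end{equation*}
Since a product of an $O_{\P}(h^\beta+(Nh^d)^{-1/2})$-term and an $O_{\P}(1)$-term is again $O_{\P}(h^\beta+(Nh^d)^{-1/2})$, the assertion reduces, in view of the hypothesis \eqref{eq: orda}, to proving $V^x_\delta=O_{\P}(1)$.

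By Assumption \ref{ass: total}(iii)(2) one has $\sum_{k=1}^N|w_k(x)|\le C_*$, and the proof of Theorem \ref{thm: maintheorem} establishes that $(\mathcal I^x_\delta)^{-1}$ is $\P$-a.s.\ well defined and bounded in probability (indeed $\mathcal I^x_\delta$ concentrates around a fixed positive definite matrix). Hence Markov's inequality reduces $V^x_\delta=O_{\P}(1)$ to the uniform moment bound
\begin{equation*}
	\sup_{k\le N}\ \E\Big[\,\Big|\int_0^T X^\Delta_{\delta,k}(t)X^\nabla_{\delta,k}(t)\diff t\,\Big|^2\Big]^{1/2}=O(1),
\end{equation*}
where $\int_0^T X^\Delta_{\delta,k}X^\nabla_{\delta,k}\diff t=\int_0^T\sc{X(t)}{\Delta K_{\delta,x_k}}\sc{X(t)}{\nabla K_{\delta,x_k}}\diff t$. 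I would split the integrand into its mean and a centred fluctuation. The fluctuation part is handled by the Gaussianity of $(\sc{X(t)}{z})_t$ via Wick's formula, together with the localisation and semigroup-decay estimates for $S_\theta$ on functions concentrated at scale $\delta$ that already enter the proof of Theorem \ref{thm: maintheorem}; using that $\sc{X(t)}{\nabla K_{\delta,x_k}}$ is of order one, $\sc{X(t)}{\Delta K_{\delta,x_k}}$ of order $\delta^{-1}$, and the correlation time of these processes of order $\delta^2$, this part is $O(1)$.

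The only genuine obstacle is the mean $\int_0^T\sc{Q(t)\Delta K_{\delta,x_k}}{\nabla K_{\delta,x_k}}\diff t$, where $Q(t)=\int_0^tS_\theta(s)S_\theta^*(s)\diff s$ is the covariance operator of $X(t)$ (the general initial condition permitted by Assumption \ref{ass: total}(iv) only adds terms negligible by the same localisation arguments), since a crude Cauchy--Schwarz bound here gives merely $O(\delta^{-1})$, which is not summable against the weights. The cancellation recovering the missing power of $\delta$ rests on the (anti)symmetry built into Assumption \ref{ass: total}(ii): writing $K=-\Delta\bar K$ with $\bar K$ even or odd, the localised functions $\Delta K_{\delta,x_k}$ and $\nabla K_{\delta,x_k}$ have opposite parity about $x_k$, while to leading order $Q(t)$ acts on functions concentrated at scale $\delta$ about $x_k$ like the parity-preserving operator $\tfrac{1}{2a}(-\Delta)^{-1}(I-e^{2at\Delta})=\int_0^t e^{2as\Delta}\diff s$. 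Decomposing $Q(t)\Delta K_{\delta,x_k}=\tfrac{1}{2a}(e^{2at\Delta}-I)K_{\delta,x_k}+R_{\delta,k}(t)$, the first term is $L^2(\R^d)$-orthogonal to $\nabla K_{\delta,x_k}$: indeed $e^{2at\Delta}$ is convolution with an even kernel, hence preserves parity about $x_k$, so both $\sc{e^{2at\Delta}K_{\delta,x_k}}{\nabla K_{\delta,x_k}}$ and $\sc{K_{\delta,x_k}}{\nabla K_{\delta,x_k}}=\tfrac12\int_{\R^d}\nabla(K_{\delta,x_k}^2)$ vanish (with only exponentially small corrections on $\Lambda$, as $x_k$ stays in the fixed compact $\mathcal J\subset\Lambda$). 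The remainder satisfies $\sup_{t\le T}\norm{R_{\delta,k}(t)}_{L^2(\R^d)}=O(\delta)$, whence $|\sc{R_{\delta,k}(t)}{\nabla K_{\delta,x_k}}|\lesssim\norm{R_{\delta,k}(t)}_{L^2(\R^d)}\norm{\nabla K_{\delta,x_k}}_{L^2(\R^d)}=O(\delta)\cdot O(\delta^{-1})=O(1)$, uniformly in $t\le T$ and $k\le N$. Integrating over $[0,T]$ yields the required $O(1)$ bound on the mean, which establishes the moment bound, hence $V^x_\delta=O_{\P}(1)$, completing the proof. The crux — and the only nonroutine ingredient — is precisely this extraction of the $O(1)$ mean from the interplay between the (anti)symmetry of $K$ and the leading local structure of $Q(t)$; everything else follows from estimates already developed for Theorem \ref{thm: maintheorem}.
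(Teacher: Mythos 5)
Your proposal is correct and follows essentially the same route as the paper: the same decomposition isolating the extra term $(\hat a_\delta-a)(\mathcal I^x_\delta)^{-1}\sum_k w_k(x)\int_0^T X^\nabla_{\delta,k}X^\Delta_{\delta,k}\diff t$, with $(\mathcal I^x_\delta)^{-1}=O_{\P}(1)$ from Proposition \ref{prop: fisher} and the remaining factor shown to be $O_{\P}(1)$ by controlling mean and variance. The cancellation you extract by hand (parity of $K$ under the heat semigroup, i.e.\ $\psi(\Delta K,\partial_iK)=0$, plus an $O(\delta)$ semigroup-replacement remainder) and the Wick-based variance bound are precisely the content of Lemma \ref{lem: ConvFisher}(iii) and Lemma \ref{ConvouterVar}(i), which the paper simply cites at this point.
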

Estimators which fulfill \eqref{eq: orda} are, for instance, given by
\begin{equation}
	\label{eq: esta}
	\hat{a}_\delta=\frac{\sum_{k=1}^Nw_k(x)\int_0^T X^\Delta_{\delta,k}(t)\diff X_{\delta,k}(t)}{\sum_{k=1}^Nw_k(x)\int_0^T X^\Delta_{\delta,k}(t)^2\diff t}\quad\text{or}\quad \hat{a}_\delta=\frac{\sum_{k=1}^N\int_0^T X^\Delta_{\delta,k}(t)\diff X_{\delta,k}(t)}{\sum_{k=1}^N\int_0^T X^\Delta_{\delta,k}(t)^2\diff t}.
\end{equation}

Finally, we can also extend Theorem \ref{thm: maintheorem} beyond the pointwise risk and quantify the quality of $\hat{\theta}_\delta$ on the whole domain $\Lambda$. 
Since the estimator $\hat{\theta}_\delta(x)$ in \eqref{eq: weighted est1} was only defined for $x\in\mathcal{J}$, we start by expanding its definition to $\Lambda$. 
Its value at $x\in\Lambda\setminus\mathcal{J}$ is set to a value $\hat{\theta}_\delta(x_0)$, whereas $x_0\in\mathcal{J}$ is closest to $x$, that is, 
\begin{equation}
	\label{eq: estwhole}
	\hat{\theta}_\delta(x)\coloneqq \inf_{x_0}\hat{\theta}_\delta(x_0),
\end{equation}
with $x_0\in\{x\in\mathcal{J}:|x-x_0|=\textrm{dist}(x,\mathcal{J})\}$.
Hence, we take the estimate at the closest point $x_0\in\mathcal{J}$ to further exploit Hölder continuity. 
The infimum over all possible $x_0$ is taken to obtain a unique estimate. 
Alternatively, one could also consider polynomial interpolation outside of $\mathcal{J}$.

\begin{corollary}
	\label{cor: intrisk}
	Grant Assumption \ref{ass: total}, and define $\hat{\theta}_\delta$ outside of $\mathcal{J}$ via \eqref{eq: estwhole}. 
	Then,
	\begin{equation}
		\label{eq: intrisk}
		\int_\Lambda\left(\hat{\theta}_\delta(x)-\theta(x)\right)^2\diff x=O_{\P}\left(h^{2\beta}+\frac{1}{Nh^d}\right)+O(d^2_{\max}\lambda(\Lambda\setminus\mathcal{J})),
	\end{equation}
	where $\lambda$ denotes the Lebesgue measure on $\R^d$ and $d^2_{\max}\coloneqq \sup_{x\in\Lambda\setminus\mathcal{J}}\operatorname{dist}^2(x,\mathcal{J})$ is the maximal squared distance of $\mathcal{J}$ to the boundary $\partial\Lambda$. 
\end{corollary}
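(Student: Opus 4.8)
The plan is to reduce the integrated risk to the pointwise risk bound already established in Theorem~\ref{thm: maintheorem} on the compact set $\mathcal{J}$, and to control the contribution of $\Lambda\setminus\mathcal{J}$ separately by exploiting Hölder continuity and the definition \eqref{eq: estwhole}. Concretely, I would split $\int_\Lambda(\hat\theta_\delta(x)-\theta(x))^2\diff x = \int_{\mathcal{J}}(\hat\theta_\delta(x)-\theta(x))^2\diff x + \int_{\Lambda\setminus\mathcal{J}}(\hat\theta_\delta(x)-\theta(x))^2\diff x$ and treat the two pieces in turn.

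For the integral over $\mathcal{J}$, the key observation is that the uniform statement \eqref{eq: extraresult} of Theorem~\ref{thm: maintheorem} gives a bound on $\P(\sup_{x\in\mathcal{J}}|\hat\theta_\delta(x)-\theta(x)|(h^\beta+(Nh^d)^{-1/2})^{-1}>M)$ that is valid uniformly in $x$ with the \emph{same} constant $M$; hence, up to an event of small probability, $\sup_{x\in\mathcal{J}}|\hat\theta_\delta(x)-\theta(x)|^2 \lesssim h^{2\beta}+(Nh^d)^{-1}$. Integrating this bound over the fixed finite-measure set $\mathcal{J}\subset\Lambda$ yields $\int_{\mathcal{J}}(\hat\theta_\delta(x)-\theta(x))^2\diff x = O_\P(h^{2\beta}+(Nh^d)^{-1})$, which is exactly the first term on the right-hand side of \eqref{eq: intrisk}. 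I would note that since $\lambda(\mathcal{J})\le\lambda(\Lambda)<\infty$ is a $\delta$-independent constant, it is absorbed into the $O_\P$.

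For the integral over $\Lambda\setminus\mathcal{J}$, fix $x\in\Lambda\setminus\mathcal{J}$ and let $x_0\in\mathcal{J}$ realise the distance, so that $\hat\theta_\delta(x)=\hat\theta_\delta(x_0)$ by \eqref{eq: estwhole} (up to the infimum, which only makes the value smaller and does not affect the upper bound on $|\hat\theta_\delta(x)-\theta(x)|$ once one also bounds below, but I would phrase the triangle inequality so that the chosen $x_0$ suffices). Then $|\hat\theta_\delta(x)-\theta(x)| \le |\hat\theta_\delta(x_0)-\theta(x_0)| + |\theta(x_0)-\theta(x)|$. The first term is controlled uniformly by the $\mathcal{J}$-analysis above; the second is bounded using $\theta\in\mathcal{H}(\beta)$ with $\beta>1$, so in particular $\theta$ is Lipschitz on the bounded domain, giving $|\theta(x_0)-\theta(x)|\lesssim|x-x_0|=\operatorname{dist}(x,\mathcal{J})\le d_{\max}$. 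Squaring, using $(a+b)^2\le 2a^2+2b^2$, and integrating over $\Lambda\setminus\mathcal{J}$ produces a term $O_\P(h^{2\beta}+(Nh^d)^{-1})\lambda(\Lambda\setminus\mathcal{J})$, again absorbed into the first $O_\P$, plus the deterministic term $O(d_{\max}^2\lambda(\Lambda\setminus\mathcal{J}))$, which is the second summand in \eqref{eq: intrisk}.

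The only mildly delicate point — and the one I would treat most carefully — is making sure the $O_\P$ bound on the pointwise risk can be integrated, i.e.\ that the hidden constant $M$ in \eqref{eq: extraresult} does not depend on $x$; this is precisely why the uniform reformulation \eqref{eq: extraresult} was included in Theorem~\ref{thm: maintheorem}, and invoking it turns the ``for each $x$'' statement into a ``$\sup_x$'' statement before integrating, so no Fubini-type subtlety about exchanging expectation and integral actually arises. A secondary point is the Hölder/Lipschitz estimate for $\theta$ near the boundary: since $\mathcal{J}$ need not be convex, one should use that $\theta$ extends to a $C^1$ (indeed $\mathcal{H}(\beta)$) function on a neighbourhood and apply the mean value bound along a path in $\Lambda$, or simply invoke boundedness of $\nabla\theta$ on $\bar\Lambda$; in either case the constant is absorbed into the $O(\cdot)$ and $d_{\max}^2$ captures the geometry. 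With these observations the result follows by combining the two pieces.
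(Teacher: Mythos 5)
Your overall route is the same as the paper's: split the integral over $\mathcal{J}$ and $\Lambda\setminus\mathcal{J}$, control the first piece through Theorem~\ref{thm: maintheorem}, and on $\Lambda\setminus\mathcal{J}$ write $\hat\theta_\delta(x)=\hat\theta_\delta(x_0)$, use the triangle inequality and the H\"older (hence Lipschitz, since $\beta>1$) regularity of $\theta$ to produce the $d_{\max}^2\lambda(\Lambda\setminus\mathcal{J})$ term. That part of your argument matches the paper's proof essentially line by line, including the $(a+b)^2\le 2a^2+2b^2$ step and the absorption of $\lambda(\Lambda\setminus\mathcal{J})$-weighted stochastic contributions into the first $O_\P$ term.

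The one genuine problem is your reading of \eqref{eq: extraresult}. That statement says: for every $\varepsilon>0$ there exist $M,\delta'$ such that $\P\big(|\hat\theta_\delta(x)-\theta(x)|(h^\beta+(Nh^d)^{-1/2})^{-1}>M\big)\le\varepsilon$ for \emph{each fixed} $x\in\mathcal{J}$ and all $\delta\le\delta'$; i.e.\ it bounds $\sup_{x\in\mathcal{J}}\P(\cdots>M)$, not $\P(\sup_{x\in\mathcal{J}}\cdots>M)$. The exceptional event depends on $x$, and over the uncountable set $\mathcal{J}$ the union of these events need not have small probability, so your claim that ``up to an event of small probability, $\sup_{x\in\mathcal{J}}|\hat\theta_\delta(x)-\theta(x)|^2\lesssim h^{2\beta}+(Nh^d)^{-1}$'' does not follow from \eqref{eq: extraresult}; a supremum bound of that type would need an additional uniformity argument (chaining/continuity in $x$) which neither you nor the paper develops. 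The paper never forms the supremum: it uses the $x$-uniform pointwise bound (backed by the $x$-uniform expectation and variance bounds on $\mathcal{R}_\delta^x$, $\mathcal{M}_\delta^x$, $\mathcal{I}_\delta^x$) to control the integrals directly, both over $\mathcal{J}$ and, via $\hat\theta_\delta(x)=\theta(x)+(\theta(x_0)-\theta(x))+O_\P(h^\beta+(Nh^d)^{-1/2})$, over $\Lambda\setminus\mathcal{J}$. If you replace your sup-interchange step by this integrated use of the uniform pointwise bound, the rest of your write-up coincides with the paper's proof.
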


\begin{remark}[discussion of Corollary \ref{cor: intrisk}]
	Equation \eqref{eq: intrisk} splits the squared integrated error into a term of stochastic order, similar to the pointwise risk in Theorem \ref{thm: maintheorem}, and a deterministic part which is entirely dependent on the compact set $\mathcal{J}\subset\Lambda$. 
	While the question of consistency thus is not immediately clear, it still can be achieved with a (possibly) slower rate. 
	The supports of $K_{\delta,x}$ are contained in $\bar{\Lambda}$ for all $x\in\mathcal{J}$ and any $\delta\leq\delta'$ for $\delta'$ small enough due to the compactness of $\mathcal{J}$. 
	This means that the distance between the boundary $\partial\Lambda$ and $\mathcal{J}$ behaves at best like $\delta'$, i.e., $d^2_{\max}\asymp(\delta')^2$. 
	On the other hand, $\lambda(\Lambda\setminus\mathcal{J})$ becomes small if $d^2_{\max}$ decreases. 
	In fact, $d^2_{\max}\asymp(\delta')^2$ implies $\lambda(\Lambda\setminus\mathcal{J})=O(\delta')$. Hence, under a maximal choice of $N$ and optimisation in $h$, \eqref{eq: intrisk} yields the order
	$$O_{\P}(\delta^{2\beta d/(2\beta+d)})+O((\delta')^3).$$
	Cases where $d^2_{\max}\asymp\delta'^2$ are given, for instance, if 
	\begin{itemize}
		\item $\Lambda$ is an $d$-dimensional open ball of radius $r$, and $\mathcal{J}$ is the closed ball with radius $r-\delta'$ and the same centre point;
		\item $\Lambda$ is a rectangular cuboid of the form $(a_1,b_1)\times\dots\times(a_d,b_d)$, and $\mathcal{J}$ is chosen as $[a_1+\delta',b_1-\delta']\times\dots\times[a_d+\delta',b_d-\delta']$.
	\end{itemize}
\end{remark}

Let us finish this section with a closer inspection of the weight functions $w_k(x)$ from Assumption \ref{ass: total}(iii). 
Their existence holds under general design assumptions, cf.~also \cite[Lemma 1.4 and Lemma 1.5]{tsybakov_introduction_2008}.

\begin{lemma}
	\label{lem: existweight}
	Let $h>0$ and $V\colon\R^d\rightarrow\R$ be a kernel function. 
	Consider the $\R^{d+1}$-valued function $U$ given by $U(u)=(1,u_1,\dots,u_d)^\top$, and define the matrix 
	$$B_{Nx}=\frac{1}{Nh^d}\sum_{k=1}^NU\left(\frac{x_k-x}{h}\right)U^\top\left(\frac{x_k-x}{h}\right)V\left(\frac{x_k-x}{h}\right).$$
	Assume that the following conditions hold:
	\begin{itemize}
		\item[$\operatorname{(LP1)}$] There exist a real number $\lambda_0>0$ and a positive integer $n_0$ such that the smallest eigenvalue $\lambda_{\min}(B_{Nx})\geq\lambda_0$ for all $n\geq n_0$ and any $x\in\mathcal{J}$.
		\item[$\operatorname{(LP2)}$] 
		There exists a real number $a_0>0$ such that, for any $A\subset\mathcal{J}$ and all $N\geq1$,
		\[
		\frac{1}{N}\sum_{k=1}^N\mathbf{1}(x_k\in A)\leq a_0\max(\lambda(A),1/N),
		\]
		with $\lambda$ denoting the Lebesgue measure.
		\item[$\operatorname{(LP3)}$] 
		The kernel $V$ has compact support in $[-1,1]^d$, and there exists a number $V_{\max}<\infty$ such that $V(u)\leq V_{\max}$ for all $u\in\R^d$.
	\end{itemize} 
	Then, the weights defined by
	\begin{equation}
		\label{eq: weightfunction}
		w_k(x)\coloneqq\frac{1}{Nh^d}U^\top(0)B_{Nx}^{-1}U\left(\frac{x_k-x}{h}\right)V\left(\frac{x_k-x}{h}\right)
	\end{equation}
	satisfy Assumption \ref{ass: total}(iii).
\end{lemma}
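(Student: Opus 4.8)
The plan is to verify the four requirements (1)--(4) of Assumption \ref{ass: total}(iii) in turn, keeping track that the resulting constant $C_*$ depends only on $d$ and on $\lambda_0,a_0,V_{\max}$ from (LP1)--(LP3). Throughout I would abbreviate $u_k\coloneqq(x_k-x)/h$, so that $B_{Nx}=\tfrac{1}{Nh^d}\sum_{k=1}^NU(u_k)U^\top(u_k)V(u_k)$ and $w_k(x)=\tfrac{1}{Nh^d}U^\top(0)B_{Nx}^{-1}U(u_k)V(u_k)$. The one structural input driving everything is (LP1): $B_{Nx}$ is symmetric positive semidefinite, so (LP1) makes it invertible for $N$ large, \emph{uniformly} in $x\in\mathcal J$, with $\norm{B_{Nx}^{-1}}_{\mathrm{op}}=\lambda_{\min}(B_{Nx})^{-1}\le\lambda_0^{-1}$, and this $x$-uniform spectral bound is precisely what will make all subsequent estimates uniform over $x\in\mathcal J$.

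Requirement (3) is immediate from the compact support of $V$ in (LP3), since $w_k(x)$ carries the factor $V(u_k)$ which vanishes once $|x_k-x|$ exceeds a fixed multiple of $h$. For the reproducing property (4) I would establish the matrix identity
\[
\sum_{k=1}^N w_k(x)\,U(u_k)
=\frac{1}{Nh^d}\sum_{k=1}^N\bigl(U^\top(0)B_{Nx}^{-1}U(u_k)\bigr)U(u_k)V(u_k)
=B_{Nx}B_{Nx}^{-1}U(0)=U(0),
\]
where the middle equality uses that the scalar $U^\top(0)B_{Nx}^{-1}U(u_k)$ equals its own transpose $U^\top(u_k)B_{Nx}^{-1}U(0)$ (as $B_{Nx}^{-1}$ is symmetric), so the sum collapses to $\bigl(\tfrac{1}{Nh^d}\sum_kU(u_k)U^\top(u_k)V(u_k)\bigr)B_{Nx}^{-1}U(0)$. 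Since $U(0)=(1,0,\dots,0)^\top$, reading off the coordinates yields $\sum_kw_k(x)=1$ and $\sum_kw_k(x)(x_k-x)^\alpha=0$ for every $\alpha$ with $|\alpha|=1$, equivalently $\sum_kw_k(x)P(x_k)=P(x)$ for all polynomials $P$ of degree $\le1$, which is exactly (4).

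Requirements (1) and (2) I would deduce from a single Cauchy--Schwarz estimate: for $u_k\in\operatorname{supp}V$ one has $|U(u_k)|=(1+|u_k|^2)^{1/2}\le(1+d)^{1/2}$, hence $|w_k(x)|\le\tfrac{1}{Nh^d}\norm{B_{Nx}^{-1}}_{\mathrm{op}}\,|U(u_k)|\,|V(u_k)|\le\tfrac{(1+d)^{1/2}V_{\max}}{\lambda_0}(Nh^d)^{-1}$, which is (1). Summing over $k$ and using that $w_k(x)=0$ unless $x_k\in A\coloneqq\mathcal J\cap\{y:|y-x|\le c_dh\}$, I would apply (LP2) with this set $A$ to get $\sum_k|w_k(x)|\lesssim_d\tfrac{1}{Nh^d}\#\{k:x_k\in A\}\lesssim_d\tfrac{1}{h^d}\max(\lambda(A),1/N)\lesssim_d\max(1,(Nh^d)^{-1})$, using $\lambda(A)\lesssim_d h^d$; in the regime $Nh^d\gtrsim1$ — which is forced for $N$ large by (LP1), since $B_{Nx}$ cannot be positive definite unless each window contains at least $d+1$ design points, and which is needed in any case for consistency, cf.\ the discussion after Theorem \ref{thm: maintheorem} — the right-hand side is a dimensional constant, giving (2). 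I do not expect a genuine obstacle here: the statement is the SPDE-adapted analogue of the classical local-polynomial weight bounds \cite[Lemmas~1.3--1.5]{tsybakov_introduction_2008}, and the only points demanding care are the transpose bookkeeping in the reproducing identity and the uniformity over $x\in\mathcal J$, both handled respectively by symmetry of $B_{Nx}^{-1}$ and by the $x$-uniform eigenvalue lower bound $\lambda_0$ in (LP1).
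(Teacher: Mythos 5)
Your proposal is correct and is, in substance, the same argument as the paper's: the paper's proof consists of deferring to the multivariate versions of \cite[Proposition 1.12 and Lemma 1.3]{tsybakov_introduction_2008}, and what you wrote out (the transpose identity $U^\top(0)B_{Nx}^{-1}U(u_k)=U^\top(u_k)B_{Nx}^{-1}U(0)$ collapsing the sum to $B_{Nx}B_{Nx}^{-1}U(0)=U(0)$ for the reproducing property, and the bound $\norm{B_{Nx}^{-1}}_{\mathrm{op}}\le\lambda_0^{-1}$ combined with the counting bound from (LP2) for conditions (1)--(2)) is exactly the content of those two results, carried out uniformly in $x\in\mathcal J$ via (LP1).

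One caveat on your justification of the regime $Nh^d\gtrsim1$ used for condition (2): it does not follow from (LP1), even together with (LP2). Having at least $d+1$ design points in the $h$-window is compatible with $Nh^d\to0$ (the prefactor $(Nh^d)^{-1}$ then inflates the eigenvalues of $B_{Nx}$, so (LP1) can hold trivially, and (LP2) is satisfied once $a_0\ge d+1$ since the relevant branch of the maximum is $1/N$); in that regime one can in fact make $\sum_k|w_k(x)|$ as large as $(Nh^d)^{-1/2}$ with two nearly coincident points, so condition (2) genuinely requires $Nh^d\gtrsim1$ as a side condition. This is not a defect relative to the paper: Tsybakov's Lemma 1.3 carries the explicit restriction $h\ge1/(2n)$, which the paper's statement silently inherits, and in the present setting the restriction is automatic because the bandwidth is always chosen with $\delta\ll h$ and $Nh^d\to\infty$ — your second, contextual justification is the right one, so simply drop the appeal to (LP1) and invoke the standing bandwidth regime instead.
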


Assumptions (LP1)-(LP3) in Lemma \ref{lem: existweight} are satisfied under reasonable constraints on the design points $x_1,\dots,x_N$. 
(LP2) means that they are densely enough distributed over $\mathcal{J}$. 
This holds true, for instance, under equidistant design, noting that at most $N\asymp\delta^{-d}$. 
(LP1) is satisfied if $V(u)>V_{\min}>0$ in a neighbourhood around 0 and if additionally $x_1,\dots,x_N$ are sufficiently dense in $\mathcal{J}$, cf.~\cite[Lemma 1.4 and Lemma 1.5]{tsybakov_introduction_2008}. 
(LP3) presents no restriction since the kernel $V$ can be chosen according to need.

\begin{example}
	\label{ex: weights}
	Let us give a concrete example of the weights $w_k$ in \eqref{eq: weightfunction}.
	Assume $d=1$, $\mathcal{J}=[0,1]$, and choose the rectangular kernel $V(y)=\mathbf{1}(-1/2\leq y\leq 1/2)$. 
	Define $I_h=\{k:|x_k-x|\leq h/2\}$. 
	Then, 
	\[
	B_{Nx}=\frac{1}{Nh}\begin{pmatrix}\sum_{k\in I_h}1&\sum_{k\in I_h}\frac{x_k-x}{h}\\\sum_{k\in I_h}\frac{x_k-x}{h}&\sum_{k\in I_h}\left(\frac{x_k-x}{h}\right)^2
	\end{pmatrix}
	\]
	has strictly positive determinant if there are at least two different points $x_i,x_j$ in an $h/2$-neighbourhood around $x$. 
	If the measurement points $x_k$ are equidistantly distributed on $\mathcal{J}$, that is, if $x_k=(k-1)/(N-1)$, $k=1,\dots,N$, and we estimate at the location $x=r/(N-1)$, $1\leq r\leq N-2$, with $h<\min(x,1-x)/2$, then $\sum_{k\in I_h}(x_k-x)/h=0$ by symmetry. 
	The weights $w_k(x)$ in \eqref{eq: weightfunction} are given by
	$$w_k(x)=(\#I_h)^{-1}\mathbf{1}(k\in I_h).$$
	In that case, the weights correspond to the weight function of the Nadaraya--Watson estimator with rectangular kernel.
	
	An estimated trajectory based on the weights in \eqref{eq: weightfunction} with Epanechnikov kernel $V(y)=0.75(1-y^2)\mathbf{1}(|y|\leq1)$ is given in Figure \ref{fig: sim}.
\end{example}

\section{Lower bounds}\label{sec: optimality}
The convergence rate $N^{-\beta/(2\beta+d)}$ established for the weighted augmented MLE in Theorem \ref{thm: maintheorem} is optimal and cannot be improved in our general setup, as will be shown in this section.
We will only consider submodels $\P_\theta$ such that $A_\theta$ involves a negative reaction term, assuming a sufficiently regular kernel function $K$ and a stationary initial condition.

\begin{customass}{O}
	\label{ass:lowerBound} Suppose that $\P_{\theta}$ corresponds to the law of the stationary solution $X$ to the SPDE \eqref{eq: SPDE}, and assume that the following conditions hold:
	\begin{enumerate}[label=(\roman*)]
		\item The kernel function satisfies $K=\Delta^2\tilde{K}$ with $\tilde{K}\in C_c^{\infty}(\R^d)$.
		\item The model is $A_{\theta}=\Delta+\theta\cdot\nabla+c$ with a nonpositive reaction function $c\colon \Lambda\rightarrow\R$ and such that $\theta\colon \Lambda\rightarrow\R$ lies in the class $\Theta$ of $\beta$-Hölder continuous functions with the properties that there exists a constant $\gamma\leq0$ such that the $(\beta-1)$ Hölder-continuous function $c-\nabla\cdot\theta$ is smaller or equal than $\gamma$ and that $\theta$ is a conservative vector field.
		\item Let $x_1,\dots,x_N$ be $\delta$-separated points in $\Lambda$, that is, $|x_k-x_l|> \delta$ for all $1\leq k\neq l\leq N$. Moreover, suppose that $\mathrm{supp}(K_{\delta,x_k})\subset\Lambda$ for all $k=1,\dots,N$, and that $\mathrm{supp}(K_{\delta,x_k})\cap\mathrm{supp}(K_{\delta,x_l})=\emptyset$ for all $1\leq k\neq l\leq N$.
	\end{enumerate}
\end{customass}
We consider the null model to be $A_\theta=\Delta$, i.e., $\theta=0$, $c=\gamma=0$, and we test against alternatives where $\theta\neq0$ and $c$ is strictly negative such that $c-\nabla\cdot\theta\leq\gamma<0$.

\begin{theorem}\label{thm:lower:bound:M>1}
	Grant Assumption \ref{ass:lowerBound}. 
	Then, there exist $c_1>0$, depending only on $K$ and $d$, and an absolute constant $c_2>0$ such that, for any $x\in\Lambda$, the following assertion holds:
	\begin{align*}
		\inf_{\hat\vartheta}\sup_{\vartheta\in\Theta}
		\P_\vartheta\left(|\hat\vartheta(x)-\vartheta(x)|\geq \frac{c_1}{2}N^{-\beta/(2\beta+d)}\right)>c_2,
	\end{align*}
	where the infimum is taken over all real-valued estimators $\hat\vartheta_i=\hat\vartheta_i(X_{\delta})$.
\end{theorem}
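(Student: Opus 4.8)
The plan is to apply the standard two-point (or many-point) reduction scheme for minimax lower bounds, following the template of \cite{tsybakov_introduction_2008} but with the key estimates borrowed from the RKHS analysis of local measurements developed in \cite{altmeyer_anisotrop2021}. First I would fix $x\in\Lambda$ and construct a finite family of alternatives $\vartheta^{(0)}=0,\vartheta^{(1)},\dots,\vartheta^{(m)}\in\Theta$ concentrated near $x$: set $\vartheta^{(j)}(y)=\rho\,\psi_j(y)$, where $\psi_j$ is a bump function of the form $\psi_j(y)=c_1 N^{-\beta/(2\beta+d)} h^{-?}\,g((y-z_j)/h)$ with $g\in C_c^\infty$ a fixed conservative (gradient) vector field profile, $h\asymp N^{-1/(2\beta+d)}$ the bandwidth from \eqref{eq: orderh1}, and $z_j$ a grid of centres spaced so that the supports are disjoint and each contains a bounded number of measurement locations $x_k$. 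The scaling of the amplitude is chosen so that $\|\vartheta^{(j)}\|_{\mathcal H(\beta)}\lesssim 1$ (hence $\vartheta^{(j)}\in\Theta$ after also adjusting the nuisance $c$ so that $c-\nabla\cdot\vartheta^{(j)}\le\gamma<0$, which is possible since $\nabla\cdot\vartheta^{(j)}=O(h^{\beta-1})\to0$), while $|\vartheta^{(j)}(x)-0|\gtrsim N^{-\beta/(2\beta+d)}$ for the one centre $z_j$ closest to $x$.

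The core of the argument is bounding the information distance between $\P_0$ (the null law under $A_\theta=\Delta$) and each $\P_{\vartheta^{(j)}}$, restricted to the observation $\sigma$-algebra generated by $X_\delta$. Here I would invoke precisely the machinery of \cite{altmeyer_anisotrop2021}: the law of the local measurement vector $X_\delta$ lives in an RKHS, and the squared Hellinger distance (equivalently, up to constants, the KL divergence on the observation filtration) between $\P_0$ and $\P_{\vartheta^{(j)}}$ is controlled by a quadratic form of the type
\[
H^2(\P_0,\P_{\vartheta^{(j)}})\;\lesssim\;\sum_{k=1}^N \int_0^T \E\big[\langle X(t),(\vartheta^{(j)}\!\cdot\!\nabla+\varphi_{\vartheta^{(j)}})K_{\delta,x_k}\rangle^2\big]\,\diff t,
\]
the same remainder-type quantity $\mathcal R_\delta^x$ that appears in \eqref{eq: est2}. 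Using the stationarity of $X$, the support condition of Assumption~\ref{ass:lowerBound}(iii), the factorization $K=\Delta^2\tilde K$ (which supplies enough smoothness/decay for the covariance bounds exactly as in the upper-bound proof), and the fact that only $O(h^d\delta^{-d})$ of the $N\asymp\delta^{-d}$ measurement locations fall in the support of a given bump, one gets $H^2(\P_0,\P_{\vartheta^{(j)}})\lesssim (\text{amplitude})^2\cdot Nh^d\lesssim N^{-2\beta/(2\beta+d)}\cdot Nh^d$. With $h\asymp N^{-1/(2\beta+d)}$ this is $\lesssim N^{-2\beta/(2\beta+d)+1-d/(2\beta+d)}=N^{0}$... so in fact I should use the single-bump ($m=1$) version with amplitude tuned so that $H^2(\P_0,\P_{\vartheta^{(1)}})\le$ a small absolute constant, which pins the separation at exactly $c_1 N^{-\beta/(2\beta+d)}$; then Theorem~2.2(i)/Le Cam's two-point lemma of \cite{tsybakov_introduction_2008} yields the claim with $c_2$ absolute.

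The main obstacle I anticipate is the covariance estimate making the RKHS/Hellinger bound explicit in $\delta$, $N$, and $h$ simultaneously — i.e.\ showing that $\E[\langle X(t),(\vartheta\cdot\nabla+\varphi_\vartheta)K_{\delta,x_k}\rangle^2]$ scales like $\|\vartheta\|_\infty^2$ (plus lower-order $\delta$-terms from $\varphi_\vartheta$) uniformly in $k$ and in $\delta\le\delta'$, using only $K=\Delta^2\tilde K$, stationarity, and the semigroup bounds under $c-\nabla\cdot\vartheta\le\gamma<0$; this is the nonparametric analogue of the variance computations in \cite{altmeyer_anisotrop2021} and is where the hypotheses of Assumption~\ref{ass:lowerBound} (stationary start, negative reaction, $C_c^\infty$ fourth antiderivative of $K$) are genuinely needed. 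A secondary technical point is verifying $\vartheta^{(1)}\in\Theta$ as a conservative field with the correct Hölder norm: choosing $g=\nabla\phi$ for a scalar bump $\phi\in C_c^\infty$ makes $\vartheta^{(1)}$ automatically a gradient, and the amplitude bookkeeping $\|\vartheta^{(1)}\|_{\mathcal H(\beta)}\asymp (\text{amplitude})\cdot h^{-\beta}\cdot h^{?}$ must be arranged — with amplitude $\asymp N^{-\beta/(2\beta+d)}=h^\beta$ and profile rescaled by $h$, one indeed gets a unit-norm $\mathcal H(\beta)$ perturbation, closing the construction. Once these two estimates are in place, assembling Le Cam's bound is routine.
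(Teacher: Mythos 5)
Your construction of the alternative coincides with the paper's: a single gradient bump $\vartheta=\nabla\xi$ with $\xi(y)=c_4h^{\beta+1}V((y-x)/h)$, amplitude $h^\beta$, $h\asymp N^{-1/(2\beta+d)}$, the nuisance $c$ adjusted so that $c-\nabla\cdot\vartheta\leq\gamma<0$, and the final calibration $Nh^dh^{2\beta}\lesssim 1$ feeding into a two-point Le Cam argument. That skeleton is exactly what the paper does, and your observation that a multi-bump scheme buys nothing for the pointwise loss is also consistent with it.

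The genuine gap is the central information bound. You assert
\[
H^2(\P_0,\P_{\vartheta})\;\lesssim\;\sum_{k=1}^N\int_0^T\E\big[\langle X(t),(\vartheta\cdot\nabla+\varphi_\vartheta)K_{\delta,x_k}\rangle^2\big]\,\diff t,
\]
attributing it to ``the RKHS machinery'', but this inequality is not what that machinery gives, and it does not follow from any Girsanov-type reasoning on the observation filtration: the drift $\langle X(t),A^*_\vartheta K_{\delta,x_k}\rangle$ of the observed process $X_{\delta,k}$ is \emph{not} adapted to $\sigma(X_\delta)$ (this is precisely why the paper speaks of a modified log-likelihood and an \emph{augmented} MLE), so the likelihood ratio of the laws of $X_\delta$ is not the Girsanov density with that drift; and a data-processing bound through the full field is vacuous, since $\int_0^T\E\norm{(\vartheta\cdot\nabla+c)X(t)}^2_{L^2(\Lambda)}\diff t$ is not finite for the distribution-valued solution. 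The paper's actual route is the Gaussian covariance-operator comparison of Lemma \ref{lem:Gaussian:lower:bound}, the reduction in Lemma \ref{lem:seriesBound:M>1} to $L^2([0,T])$-norms of differences of temporal covariance kernels $c_{\vartheta^0,\delta,k,l}-c_{\vartheta^1,\delta,k,l}$ and their second derivatives over \emph{all pairs} $(k,l)$, and then the substantive Lemma \ref{lem:concrete_lower_bound}, which converts these kernel differences into $\sum_k\big(|\vartheta(x_k)|^2+\delta^2\tilde c_\vartheta(x_k)^2\big)$. That last step is where the hypotheses you list are actually consumed: the conservative-field assumption enters through the ground-state transform $U_{\vartheta^1}$ that diagonalises $A^*_{\vartheta^1}$, $K=\Delta^2\tilde K$ supplies the time-decay needed to integrate the semigroup bounds, and the $\delta$-separation together with heat-kernel off-diagonal decay (and Lemma \ref{lem:sum:inverse:packing}) controls the cross terms $k\neq l$, which your single-sum heuristic ignores entirely. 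So while your ``main obstacle'' paragraph correctly locates where the difficulty lies, the bound you propose to prove there is the diagonal part of a different (unjustified) inequality, and the argument as written would not close without redoing the covariance-kernel analysis of Lemmas \ref{lem:seriesBound:M>1} and \ref{lem:concrete_lower_bound}.
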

As the weighted augmented MLE is not only based on the observations of $X_\delta$, but also on $X^\Delta_\delta$ and $X^\nabla_\delta$, Theorem \ref{thm:lower:bound:M>1} can be furthermore extended to estimators $\hat{\theta}$ using those additional observations. 

\begin{theorem}\label{thm:lower:bound:M>1:add:measurements}
	Theorem \ref{thm:lower:bound:M>1} remains valid when the infimum is taken over all real-valued estimators $\hat\vartheta_i=\hat\vartheta_i(X_{\delta},X_{\delta}^\Delta,X_{\delta}^{\nabla})$, provided that $K$, $\Delta K$ and $\partial_i K$ are independent and Assumption \ref{ass:lowerBound}(i) holds for  $K$, $\Delta K$ and $\partial_i K$, $1\leq i\leq d$.
\end{theorem}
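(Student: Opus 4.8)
The plan is to revisit the lower‑bound argument behind Theorem~\ref{thm:lower:bound:M>1} and to check that it survives enlarging the observation by $X_\delta^\Delta$ and $X_\delta^\nabla$, at the expense of a few extra kernel‑dependent constants. Two preliminary reductions clean up the set‑up. First, the extra regularity requirement is automatic under Assumption~\ref{ass:lowerBound}: since $K=\Delta^2\tilde K$ with $\tilde K\in C_c^\infty(\R^d)$, we have $\Delta K=\Delta^2(\Delta\tilde K)$ and $\partial_iK=\Delta^2(\partial_i\tilde K)$ with $\Delta\tilde K,\partial_i\tilde K\in C_c^\infty(\R^d)$, so Assumption~\ref{ass:lowerBound}(i) transfers to $\Delta K$ and to each $\partial_iK$; only the independence of $K,\Delta K,\partial_1K,\dots,\partial_dK$ is a genuine extra hypothesis, and it will take over the role that non‑degeneracy of $K$ plays in the scalar proof. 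Second, since $\Delta K_{\delta,x_k}=\delta^{-2}(\Delta K)_{\delta,x_k}$ and $\partial_iK_{\delta,x_k}=\delta^{-1}(\partial_iK)_{\delta,x_k}$, the observations $X_\delta^\Delta$ and $X_\delta^\nabla$ are, up to deterministic and hence information‑free rescalings, the local measurements of the same solution $X$ against the kernels $\Delta K$ and $\partial_iK$ at resolution $\delta$. An estimator $\hat\vartheta(X_\delta,X_\delta^\Delta,X_\delta^\nabla)$ is therefore measurable with respect to the local measurements of $X$ against the $\R^{d+2}$‑valued kernel $\mathbf K\coloneqq(K,\Delta K,\partial_1K,\dots,\partial_dK)^\top$, and it suffices to prove the asserted lower bound for all estimators of that form.

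I would then run the proof of Theorem~\ref{thm:lower:bound:M>1} with $K$ replaced by $\mathbf K$. The construction of the competing velocities is untouched: it pits the null $\vartheta_0=0$ against a perturbation $\vartheta_1=\nabla\psi_1$, $\psi_1\in C_c^\infty(\R^d)$, rescaled to spatial width of order $h$ and amplitude of order $h^\beta$ and localised at $x$, with $h\asymp N^{-1/(2\beta+d)}$; this depends only on $\beta$, $d$, $x$ and the constraints defining $\Theta$ (Hölder regularity, conservativeness, and the sign condition on $c-\nabla\cdot\vartheta$), not on the kernel, and it keeps the pointwise separation $|\vartheta_1(x)-\vartheta_0(x)|\asymp N^{-\beta/(2\beta+d)}$. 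What has to be re‑examined is the information bound, i.e., a bound of constant order for the squared Hellinger distance between $\P_{\vartheta_0}$ and $\P_{\vartheta_1}$, now on the $\sigma$‑algebra generated by the $\mathbf K$‑local measurements. Exactly as in the scalar case this is read off from the RKHS/Cameron--Martin description of the enlarged centred Gaussian observation --- the independence of the components of $\mathbf K$ supplying the non‑degeneracy of this Gaussian process that makes the RKHS computation well posed --- while the disjoint‑support condition of Assumption~\ref{ass:lowerBound}(iii) splits the bound into a sum of per‑location contributions. Each such contribution is the squared RKHS norm of the perturbation direction induced at $x_k$ by the first‑order term $\nabla\psi_1\cdot\nabla$ in the generator; because every component of $\mathbf K$ is $\Delta^2$ of a smooth, compactly supported function, this norm remains of order $h^{2\beta}$, now with a constant that depends additionally on $d$ and on $\Delta K,\partial_iK$ (all of which are derived from $K$). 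Summing these per‑location terms over the $x_k$ in the support of $\psi_1$ reproduces, for the chosen bandwidth, the same $O(1)$ total as in the scalar case.

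With the pointwise separation of order $N^{-\beta/(2\beta+d)}$ and the information bounded by a constant, the same two‑point (Le Cam) argument as in Theorem~\ref{thm:lower:bound:M>1} yields constants $c_1=c_1(K,d)>0$ and an absolute $c_2>0$ with
\[
\inf_{\hat\vartheta}\sup_{\vartheta\in\Theta}\P_\vartheta\!\left(|\hat\vartheta(x)-\vartheta(x)|\geq\tfrac{c_1}{2}N^{-\beta/(2\beta+d)}\right)>c_2,
\]
the infimum now taken over all estimators $\hat\vartheta=\hat\vartheta(X_\delta,X_\delta^\Delta,X_\delta^\nabla)$, which is the assertion. The step requiring the most care is the re‑derivation of the per‑location information bound for the vector kernel $\mathbf K$: one must check that appending the $\Delta$‑ and $\nabla$‑measurements --- which probe the higher‑order spatial behaviour of $X$ near $x_k$ and whose unscaled variances behave very differently in $\delta$ --- inflates each per‑location Hellinger contribution only by a fixed multiplicative constant, so that the perturbation is still detected only through a term of size $\asymp h^\beta$ with an $O(1)$ prefactor. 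This is precisely where the independence hypothesis (for the invertibility of the joint local covariance) and the iterated factorisation through $\Delta^2$ (for the required smoothing) enter; the remainder is a routine repetition of the proof of Theorem~\ref{thm:lower:bound:M>1}.
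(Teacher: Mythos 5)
Your proposal follows exactly the route the paper intends: the paper omits this proof, stating it follows from the proof of Theorem \ref{thm:lower:bound:M>1} by minor modifications (as in \cite[Theorem 3.3]{altmeyer_anisotrop2021}), namely rerunning the RKHS/Hellinger two-point argument for the enlarged Gaussian observation with the vector kernel $(K,\Delta K,\partial_1K,\dots,\partial_dK)$, with the independence hypothesis ensuring non-degeneracy and the $\Delta^2$-structure preserving the per-location bounds. Your reductions (rescaling $X_\delta^\Delta,X_\delta^\nabla$ to local measurements against $\Delta K,\partial_iK$, unchanged alternative $\vartheta^1$, per-location contributions still of order $h^{2\beta}$ summing to $O(1)$) are precisely these minor modifications, so the proposal is correct and essentially coincides with the paper's argument.
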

Theorem \ref{thm:lower:bound:M>1} is proven in Section \ref{sec:prooflower} below. The proof of Theorem \ref{thm:lower:bound:M>1:add:measurements} is skipped as it relies only on minor modifications, see also \cite[Theorem 3.3]{altmeyer_anisotrop2021}.

\section{Technical supplement: Auxiliary results and proofs}\label{sec: proofs}
We start with a few initial notations and remarks. 
Write $\Lambda_{\delta,y}=\{\delta^{-1}(u-y)\colon u\in\Lambda\}$, $\Lambda_{0,y}=\R^d$, and introduce the rescaled operators $A_{\theta,\delta,y}$ and $\bar A_{\delta,y}$ with domain $H^1_0(\Lambda_{\delta,y})\cap H^2(\Lambda_{\delta,y})$ by setting
\[
A_{\theta,\delta,y}\coloneqq a\Delta +\delta\theta(y+\delta\cdot)\cdot\nabla+\delta^2c(y+\delta\cdot),\quad 
\bar{A}_{\delta,y}\coloneqq a\Delta.
\]
The associated analytic semigroups on $L^2(\Lambda_{\delta,y})$ are denoted by $(S_{\theta,\delta,y}(t))_{t\geq0}$ and $(\bar{S}_{\delta,y}(t))_{t\geq0}$, respectively. 
Write $\e^{ta\Delta}$ for the semigroup on $L^2(\R^d)$ generated by $a\Delta$ on $H^2(\R^d)$.
Define the heat kernel $q_t(u)=(4\pi t)^{-d/2}\exp(-|u|^2/(4t))$, and notice that, for $(\e^{ta\Delta})z=q_{at}\ast z$, by Young's inequality, 
\[
\norm{\e^{ta\Delta}z}_{L^2(\R^d)}\lesssim (1\wedge t^{-d/4})(\norm{z}_{L^1(\R^d)}+\norm{z}_{L^2(\R^d)}).
\]
We denote $\varphi_\theta=\nabla\cdot\theta-c$, and we want to estimate $\theta$ at the (fixed) location $x\in\mathcal{J}$.
The stochastic order $O_{\P}(h^\beta+\delta h+\delta^2)$ of $\mathcal{R}_\delta^x$, which can, in principle, be found from the proofs in Section \ref{sec: restterm} below, will always be dominated by $h^\beta$ as $\beta\leq2$. 
This is clear since our methodology is only applicable if $\delta\ll h$. 
Indeed, if $h\leq\delta$, then consistency cannot be achieved as the number of observations used to construct the estimator in \eqref{eq: weighted est1} remains finite. Optimising \eqref{eq: mainresult} with respect to the bandwidth $h$ yields that $h\asymp N^{-1/(2\beta+d)}$. 
Furthermore, Assumption \ref{ass: total} implies that there exist at most $N\asymp \delta^{-d}$ spatial observation locations. 
Together, this gives for any dimension $d\geq1$ and $\beta\in(1,2]$,
\begin{equation}
	\label{eq: order h}
	\delta^2\ll \delta^{\beta d/(2\beta+d)}=O(h^\beta),\quad \delta^{d/2}\ll h^\beta,
\end{equation}
which we will frequently use in Sections \ref{sec: properties measurements} and \ref{sec: detailederror} down below. 

\subsection{The rescaled semigroup}
\label{sec: rescaledsemigroup}
In this section, we present properties of the rescaled semigroup $(S^*_{\theta,\delta,y}(t))_{t\geq0}$ and its infinitesimal generator $A^*_{\theta,\delta,y}.$

\begin{lemma}[Lemma 3.1 of \cite{altmeyer_nonparametric_2020}]
	\label{rescaledsemigroup}
	For $\delta>0$ and $y\in\Lambda$, it holds:
	\begin{itemize}
		\item[$\operatorname{(i)}$] If $z\in H^1_0(\Lambda_{\delta,y})\cap H^2(\Lambda_{\delta,y})$, then $A^\ast_{\vartheta}z_{\delta,y}=\delta^{-2}(A^\ast_{\vartheta,\delta,y}z)_{\delta,y}$;
		\item[$\operatorname{(ii)}$] if $z\in L^2(\Lambda_{\delta,y})$, then $S^\ast_{\vartheta}(t)z_{\delta,y}=(S^\ast_{\vartheta,\delta,y}(t\delta^{-2})z)_{\delta,y}$, $t\geq0$.
	\end{itemize}
\end{lemma}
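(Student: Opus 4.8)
The plan is to prove (i) by a direct change-of-variables computation and then obtain (ii) from (i) by abstract semigroup uniqueness. Throughout, write $\mathcal{T}_{\delta,y}z\coloneqq z_{\delta,y}$, i.e.\ $(\mathcal{T}_{\delta,y}z)(x)=\delta^{-d/2}z(\delta^{-1}(x-y))$, for the localisation map. The normalising factor $\delta^{-d/2}$ makes $\mathcal{T}_{\delta,y}\colon L^2(\Lambda_{\delta,y})\to L^2(\Lambda)$ unitary (substitute $u=\delta^{-1}(x-y)$), with $\mathcal{T}_{\delta,y}^{\ast}=\mathcal{T}_{\delta,y}^{-1}$, and it restricts to a bijection of $H^1_0(\Lambda_{\delta,y})\cap H^2(\Lambda_{\delta,y})$ onto $H^1_0(\Lambda)\cap H^2(\Lambda)$, since rescaling preserves $H^2$-membership and $\partial\Lambda_{\delta,y}=\delta^{-1}(\partial\Lambda-y)$ preserves the zero trace. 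These two structural facts drive the whole argument.

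For (i), the key elementary identities are the chain-rule relations $\nabla(\mathcal{T}_{\delta,y}z)=\delta^{-1}\mathcal{T}_{\delta,y}(\nabla z)$ and $\Delta(\mathcal{T}_{\delta,y}z)=\delta^{-2}\mathcal{T}_{\delta,y}(\Delta z)$, together with the fact that a coefficient evaluated at $x$ equals its rescaled version evaluated at $u=\delta^{-1}(x-y)$, e.g.\ $\theta(x)=\theta(y+\delta u)$ and $(\nabla\cdot\theta)(x)=(\nabla\cdot\theta)(y+\delta u)$; note also $\nabla_u[\theta(y+\delta\,\cdot)]=\delta(\nabla\cdot\theta)(y+\delta\,\cdot)$, which is what produces the $\delta$- and $\delta^{2}$-weights in $A_{\theta,\delta,y}$. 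Inserting these into $A^{\ast}_{\theta}w=a\Delta w-\theta\cdot\nabla w+(c-\nabla\cdot\theta)w$ with $w=\mathcal{T}_{\delta,y}z$ and collecting powers of $\delta$ yields $a\delta^{-2}\mathcal{T}_{\delta,y}(\Delta z)-\delta^{-1}\mathcal{T}_{\delta,y}\bigl(\theta(y+\delta\,\cdot)\cdot\nabla z\bigr)+\mathcal{T}_{\delta,y}\bigl((c-\nabla\cdot\theta)(y+\delta\,\cdot)\,z\bigr)$, and writing out the $L^2(\Lambda_{\delta,y})$-adjoint of $A_{\theta,\delta,y}$ as $A^{\ast}_{\theta,\delta,y}=a\Delta-\delta\,\theta(y+\delta\,\cdot)\cdot\nabla+\delta^{2}(c-\nabla\cdot\theta)(y+\delta\,\cdot)$ shows that this equals $\delta^{-2}\mathcal{T}_{\delta,y}(A^{\ast}_{\theta,\delta,y}z)$, which is (i). Equivalently, one may first verify $A_{\theta}\mathcal{T}_{\delta,y}=\delta^{-2}\mathcal{T}_{\delta,y}A_{\theta,\delta,y}$ on the common domain and then take adjoints, using $\mathcal{T}_{\delta,y}^{\ast}=\mathcal{T}_{\delta,y}^{-1}$.

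For (ii), I would introduce the conjugated, time-rescaled family $\tilde S(t)\coloneqq\mathcal{T}_{\delta,y}\,S^{\ast}_{\theta,\delta,y}(t\delta^{-2})\,\mathcal{T}_{\delta,y}^{-1}$ on $L^2(\Lambda)$. Since $\mathcal{T}_{\delta,y}$ is bounded and boundedly invertible and $(S^{\ast}_{\theta,\delta,y}(s))_{s\ge0}$ is an analytic $C_0$-semigroup, $(\tilde S(t))_{t\ge0}$ is an analytic $C_0$-semigroup on $L^2(\Lambda)$ whose generator is $\delta^{-2}\mathcal{T}_{\delta,y}A^{\ast}_{\theta,\delta,y}\mathcal{T}_{\delta,y}^{-1}$ with domain $\mathcal{T}_{\delta,y}\bigl(H^1_0(\Lambda_{\delta,y})\cap H^2(\Lambda_{\delta,y})\bigr)=H^1_0(\Lambda)\cap H^2(\Lambda)$. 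By part (i) this generator is exactly $A^{\ast}_{\theta}$ (same operator, same domain $\mathcal{D}(A^{\ast}_{\theta})$), and since a $C_0$-semigroup is uniquely determined by its generator, $\tilde S(t)=S^{\ast}_{\theta}(t)$ for every $t\ge0$; unwinding the conjugation gives $S^{\ast}_{\theta}(t)z_{\delta,y}=(S^{\ast}_{\theta,\delta,y}(t\delta^{-2})z)_{\delta,y}$, which is (ii).

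None of this is deep — the content is bookkeeping of $\delta$-powers and a standard conjugation argument. The only point deserving genuine care is the transition from (i) to (ii): one must check that the conjugation really yields a $C_0$-semigroup with the conjugated generator on the conjugated domain, and confirm that this domain is literally $\mathcal{D}(A^{\ast}_{\theta})=H^1_0(\Lambda)\cap H^2(\Lambda)$ as used in the paper, so that the semigroup-uniqueness theorem applies verbatim. Since the statement is quoted from \cite{altmeyer_nonparametric_2020}, one could also simply cite it, but the self-contained computation above is short enough to include.
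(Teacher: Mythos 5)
Your argument is correct and is essentially the standard rescaling proof: the paper itself gives no proof of this lemma but quotes it from \cite{altmeyer_nonparametric_2020}, whose Lemma 3.1 is established by exactly this kind of computation — the chain-rule/change-of-variables identity for the adjoint operator, followed by conjugating the rescaled semigroup with the unitary localisation map and invoking uniqueness of the semigroup generated by $A_\theta^\ast$ on $H^1_0(\Lambda)\cap H^2(\Lambda)$. The only blemish is notational: the identity producing the $\delta$-weights should read $\nabla_u\cdot\bigl[\theta(y+\delta\,\cdot)\bigr]=\delta(\nabla\cdot\theta)(y+\delta\,\cdot)$ (a divergence, not a gradient), which is clearly what you intended.
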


The following lemma is a classical result for sectorial operators and corresponding analytic semigroups. 
Our version holds for growing domains $\Lambda_{\delta,y}$, uniformly in $y\in\mathcal{J}$.

\begin{lemma}\label{boundS*_01}
	There exist universal constants $M_0,M_1,C>0$ such that, for $\delta\geq 0$, $t>0$, 
	\begin{align*}
		\sup_{y\in \mathcal{J}}\norm{S^\ast_{\vartheta,\delta,y}(t)}_{L^2(\Lambda_{\delta,y})}&\le M_0\e^{C\delta^2t},\\
		\sup_{y\in \mathcal{J}}\norm{t(C\delta^2I-A^\ast_{\theta,\delta,y})S_{\vartheta,\delta,y}^\ast(t)}_{L^2(\Lambda_{\delta,y})}&\le M_1\e^{C\delta^2t}.
	\end{align*}
\end{lemma}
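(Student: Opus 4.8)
The plan is to realise each $A^*_{\vartheta,\delta,y}$ as a relatively bounded perturbation of the Dirichlet Laplacian on $L^2(\Lambda_{\delta,y})$ and then to check that every constant entering the perturbation argument can be chosen independently of $\delta$, of $y\in\mathcal{J}$, and of the growing domain $\Lambda_{\delta,y}$. The whole estimate will come from the classical theory of analytic semigroups; the only thing that requires care is the uniformity, and this is precisely where the specific structure (self-adjoint leading part, first-order term carrying a factor $\delta$, coefficients living on the fixed bounded domain $\Lambda$) is used.

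First I would record the properties of the unperturbed part. On $L^2(\Lambda_{\delta,y})$ with domain $H^1_0(\Lambda_{\delta,y})\cap H^2(\Lambda_{\delta,y})$ --- and on $L^2(\R^d)$ with domain $H^2(\R^d)$ in the limiting case $\delta=0$ --- the operator $a\Delta$ is self-adjoint and non-positive, since integration by parts gives $\operatorname{Re}\sc{a\Delta z}{z}=-a\norm{\nabla z}^2\le0$, the boundary term vanishing because $z\in H^1_0$. Consequently $a\Delta$ generates an analytic semigroup of contractions and is sectorial with constants that do not depend on the domain: $\norm{(\lambda I-a\Delta)^{-1}}\le\operatorname{dist}(\lambda,(-\infty,0])^{-1}$ for $\lambda\notin(-\infty,0]$, and hence also $\norm{a\Delta(\lambda I-a\Delta)^{-1}}\le1+|\lambda|\operatorname{dist}(\lambda,(-\infty,0])^{-1}$, uniformly in $\delta\ge0$ and $y\in\mathcal{J}$.

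Next I would split $A^*_{\vartheta,\delta,y}=a\Delta+P_{\delta,y}$, where $P_{\delta,y}$ gathers the transport term $-\delta\,\theta(y+\delta\cdot)\cdot\nabla$ and the zeroth-order term $\delta^2\big(c(y+\delta\cdot)-(\nabla\cdot\theta)(y+\delta\cdot)\big)$, and estimate its relative bound. Since translating and rescaling do not increase the supremum, $\norm{\theta(y+\delta\cdot)}_{L^\infty(\Lambda_{\delta,y})}\le\norm{\theta}_{L^\infty(\Lambda)}<\infty$ and likewise for $c$ and $\nabla\cdot\theta$ (all finite because $\theta\in\mathcal{H}(\beta)$, $c\in\mathcal{H}(\beta-1)$ on the bounded set $\Lambda$). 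Hence, for $z\in D(a\Delta)$,
\[
\norm{P_{\delta,y}z}\le\delta\norm{\theta}_{L^\infty(\Lambda)}\norm{\nabla z}+\delta^2\big(\norm{\nabla\cdot\theta}_{L^\infty(\Lambda)}+\norm{c}_{L^\infty(\Lambda)}\big)\norm{z},
\]
and combining $\norm{\nabla z}^2=-a^{-1}\operatorname{Re}\sc{a\Delta z}{z}\le a^{-1}\norm{a\Delta z}\norm{z}$ with Young's inequality absorbs one factor of $\delta$ into the principal part, leaving $\norm{P_{\delta,y}z}\le\tfrac14\norm{a\Delta z}+K\delta^2\norm{z}$ with $K$ depending only on $a$, $\norm{\theta}_{L^\infty(\Lambda)}$, $\norm{\nabla\cdot\theta}_{L^\infty(\Lambda)}$, $\norm{c}_{L^\infty(\Lambda)}$ --- in particular not on $\delta$, $y$, or the domain. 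The decisive point is that the relative bound $\tfrac14$ is itself $\delta$-free, thanks to the $\delta$ carried by the transport term, so that the entire $\delta$-dependence is relegated to the additive constant $K\delta^2$.

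Finally I would invoke the perturbation theorem for sectorial operators. Choosing $C>0$ large enough in terms of $K$ and the universal sector constants above, the two estimates give $\norm{P_{\delta,y}(\lambda I-a\Delta)^{-1}}\le\tfrac14\norm{a\Delta(\lambda I-a\Delta)^{-1}}+K\delta^2\norm{(\lambda I-a\Delta)^{-1}}\le\tfrac12$ on a suitable sector with vertex $C\delta^2$, so that the Neumann series shows $A^*_{\vartheta,\delta,y}-C\delta^2I$ to be sectorial with vertex $0$ and with constants bounded by universal multiples of those for $a\Delta$, uniformly in $\delta\ge0$ and $y\in\mathcal{J}$. Feeding these into the Dunford representation of $e^{t(A^*_{\vartheta,\delta,y}-C\delta^2I)}$ and of $t\frac{\diff}{\diff t}e^{t(A^*_{\vartheta,\delta,y}-C\delta^2I)}$ yields universal $M_0,M_1$ bounding $\norm{e^{t(A^*_{\vartheta,\delta,y}-C\delta^2I)}}$ and $\norm{t(A^*_{\vartheta,\delta,y}-C\delta^2I)e^{t(A^*_{\vartheta,\delta,y}-C\delta^2I)}}$ for all $t>0$; multiplying through by $e^{C\delta^2t}$ gives exactly the two claimed inequalities. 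The only genuine obstacle is the uniformity over the non-compact family $\{\Lambda_{\delta,y}\}$, and it is resolved precisely by the two facts isolated above: the Dirichlet Laplacian has domain-independent sectorial bounds (being self-adjoint and $\le0$), and the perturbation constants involve only $L^\infty$-norms over the fixed bounded domain $\Lambda$, which rescaling and translation leave untouched.
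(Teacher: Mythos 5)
Your proof is correct and reaches the lemma the same way the paper ultimately does -- by establishing a resolvent bound $\norm{(\lambda I-A^\ast_{\vartheta,\delta,y})^{-1}}_{L^2(\Lambda_{\delta,y})}\le M/|\lambda-C\delta^2|$ on a sector with vertex $C\delta^2$, uniformly in $\delta$ and $y\in\mathcal{J}$, and then feeding the shifted generator $C\delta^2-A^\ast_{\vartheta,\delta,y}$ into the standard analytic-semigroup estimates (the paper cites \cite[Proposition 2.1.1]{lunardi_analytic_1995}; your Dunford-integral formulation is the same statement). Where you differ is in how the uniform sectoriality is obtained: the paper simply adapts the proof of Proposition A.4 of \cite{altmeyer_nonparametric_2020} for elliptic rescaled operators, whereas you give a self-contained Kato-type perturbation argument, exploiting that the Dirichlet Laplacian is self-adjoint and nonpositive (hence sectorial with domain-independent constants on every $\Lambda_{\delta,y}$) and that the lower-order part carries the factors $\delta$ and $\delta^2$, so its relative bound can be absorbed with an $O(\delta^2)$ additive constant. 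That is a legitimate and arguably more transparent route, and it makes explicit exactly where the uniformity over the growing domains comes from. Two small points deserve tightening: first, with the relative bound fixed at $\tfrac14$, the estimate $\tfrac14\norm{a\Delta(\lambda I-a\Delta)^{-1}}+K\delta^2\norm{(\lambda I-a\Delta)^{-1}}\le\tfrac12$ does not quite hold as stated (already $\norm{a\Delta(\lambda I-a\Delta)^{-1}}$ can be as large as $1+C_\eta$ with $C_\eta\ge1$ on a sector of half-angle $\eta>\pi/2$); this is harmless because Young's inequality lets you take the relative bound as small as you like at the price of enlarging $K$. Second, on the sector with vertex $C\delta^2$ the bound $K\delta^2\norm{(\lambda I-a\Delta)^{-1}}$ must be controlled also for $\lambda$ near the vertex, where $|\lambda-C\delta^2|$ is small; there one should use $\operatorname{dist}(\lambda,(-\infty,0])\gtrsim_\eta C\delta^2+|\lambda-C\delta^2|$, so that the term is at most of order $K/C$ and becomes small by choosing $C$ large -- which is precisely the role the shift $C\delta^2$ plays in the lemma. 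With these adjustments your Neumann-series step yields the uniform resolvent bound and the claimed constants $M_0,M_1,C$.
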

This lemma shows that the shifted semigroup $\e^{-2C\delta^2t}S^\ast_{\theta,\delta,y}(t)$ decays exponentially,  
\[
\norm{\e^{-2C\delta^2t}S^\ast_{\theta,\delta,y}(t)}_{L^2(\Lambda_{\delta,y})}\le\e^{-C\delta^2t},
\] 
and so the resolvent set of the correspondingly shifted infinitesimal generator $2C\delta^2 - A^\ast_{\theta,\delta,y}$ contains the right half of the complex plane. 
This allows for defining the fractional powers $(2C\delta^2 - A^\ast_{\theta,\delta,y})^{s}$ for $s\in\R$, see \cite[Section 4.4]{hairer_introduction_2009}, and we obtain by \cite[Proposition 4.37]{hairer_introduction_2009} the usual smoothing property of analytic semigroups.

\begin{lemma}\label{boundS*_s}
	There exists a universal constant $M_2$ such that, for $\delta\geq 0$, $t>0$ and $s\geq0$,
	\[
	\sup_{y\in \mathcal{J}}\norm{ t^{s}(2C\delta^2-A^\ast_{\vartheta,\delta,y})^{s}S^\ast_{\vartheta,\delta,y}(t)}_{L^2(\Lambda_{\delta,y})}\leq M_2\e^{C\delta^2t}.
	\]
\end{lemma}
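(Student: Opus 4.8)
**Planning the proof of Lemma \ref{boundS*_s}.**

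The statement follows directly from the preceding remark: once we know that the shifted semigroup $\e^{-2C\delta^2 t}S^\ast_{\vartheta,\delta,y}(t)$ is uniformly exponentially stable (from Lemma \ref{boundS*_01}), the operator $-(2C\delta^2 - A^\ast_{\vartheta,\delta,y})$ generates an analytic semigroup whose growth bound is negative, uniformly in $\delta$ and $y\in\mathcal{J}$, and the classical smoothing estimate for fractional powers of the generator applies. The plan is to reduce to the uniform case of \cite[Proposition 4.37]{hairer_introduction_2009}.

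\medskip

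\textbf{Step 1: Uniformise the semigroup bound.} First I would record, from Lemma \ref{boundS*_01}, that $B_{\delta,y} \coloneqq 2C\delta^2 I - A^\ast_{\vartheta,\delta,y}$ satisfies $\norm{\e^{-tB_{\delta,y}}}_{L^2(\Lambda_{\delta,y})} = \e^{-2C\delta^2 t}\norm{S^\ast_{\vartheta,\delta,y}(t)} \le M_0 \e^{-C\delta^2 t} \le M_0$ for all $t>0$, with $M_0$ independent of $\delta\ge 0$ and $y\in\mathcal{J}$. Similarly the second bound in Lemma \ref{boundS*_01} gives $\norm{t B_{\delta,y}'\, \e^{-tB_{\delta,y}'}} \le M_1$ where $B_{\delta,y}' = C\delta^2 I - A^\ast_{\vartheta,\delta,y}$; shifting by a further $C\delta^2$ only improves decay, so $\norm{t B_{\delta,y}\,\e^{-tB_{\delta,y}}} \le M_1'$ uniformly as well (the extra factor $\e^{-C\delta^2 t}\le 1$ and the algebraic identity $tB_{\delta,y}\e^{-tB_{\delta,y}} = (tB'_{\delta,y}\e^{-tB'_{\delta,y}})\e^{-C\delta^2 t} + C\delta^2 t\,\e^{-tB_{\delta,y}}$, where the last term is bounded by $C\delta^2 t\, M_0\e^{-C\delta^2 t} \le M_0 \sup_{r>0} r\e^{-r}$). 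Hence the family $(B_{\delta,y})$ is, uniformly in $\delta\ge 0$ and $y\in\mathcal{J}$, a family of sectorial operators with a common sector and common constants, and the associated semigroups are uniformly bounded.

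\medskip

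\textbf{Step 2: Apply the fractional-power smoothing estimate.} For a single sectorial operator $B$ with $\norm{\e^{-tB}}\le M_0$ and $\norm{tB\e^{-tB}}\le M_1$ for all $t>0$, the standard argument (e.g.\ \cite[Proposition 4.37]{hairer_introduction_2009}) gives $\norm{t^s B^s \e^{-tB}} \le M_2(M_0,M_1,s)$ for every $s\ge 0$, first for $s\in[0,1]$ via the moment inequality / spectral representation $B^s\e^{-tB} = \frac{1}{\Gamma(-s)}\int_0^\infty (\e^{-rB} - I)\,e^{-tB}\, r^{-1-s}\diff r$ type formula (for $s\in(0,1)$) and then for general $s$ by the semigroup property $t^s B^s\e^{-tB} = (t/n)^{s_1} B^{s_1}\e^{-(t/n)B}\cdots$ splitting $s = s_1 + \dots + s_n$ with each $s_j\le 1$ and $t$ into $n$ equal pieces. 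Because in Step 1 the constants $M_0, M_1$ were shown to be independent of $\delta$ and $y\in\mathcal{J}$, the resulting constant $M_2$ is likewise independent, and we may reinstate the shift: $t^s(2C\delta^2 - A^\ast_{\vartheta,\delta,y})^s S^\ast_{\vartheta,\delta,y}(t) = t^s B_{\delta,y}^s \e^{-tB_{\delta,y}} \cdot \e^{2C\delta^2 t}$, so
\[
\sup_{y\in\mathcal{J}}\norm{t^s(2C\delta^2 - A^\ast_{\vartheta,\delta,y})^s S^\ast_{\vartheta,\delta,y}(t)}_{L^2(\Lambda_{\delta,y})} \le M_2\, \e^{C\delta^2 t},
\]
which is the claim (absorbing $\e^{C\delta^2 t}$ exactly as in the statement, while for bounded $\delta^2 t$ the factor is harmless and for the intended regime $\delta\to 0$ it tends to $1$).

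\medskip

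\textbf{Main obstacle.} The only genuine issue is \emph{uniformity in the growing domains} $\Lambda_{\delta,y}$: the constants in the sectoriality estimate and in the Dunford-integral representation of the fractional power must not degrade as $\delta\to 0$ (when $\Lambda_{\delta,y}\uparrow\R^d$) or as $y$ ranges over $\mathcal{J}$. This is precisely what Lemma \ref{boundS*_01} was engineered to deliver — a common sector and common resolvent bounds for the whole family — so the burden is really just to check that the abstract proof of \cite[Proposition 4.37]{hairer_introduction_2009} uses only these quantities (the sector angle, $M_0$, $M_1$) and nothing domain-specific. Everything else — the contour integral defining $B^s$, the splitting over $n$ pieces, letting $C\delta^2$ absorb into the stated exponential — is routine.
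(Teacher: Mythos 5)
This is essentially the paper's own argument: the paper proves the lemma precisely by noting that Lemma \ref{boundS*_01} makes the shifted generators $2C\delta^2-A^\ast_{\vartheta,\delta,y}$ uniformly sectorial in $\delta\geq0$ and $y\in\mathcal{J}$ and then invoking \cite[Proposition 4.37]{hairer_introduction_2009}, which is exactly your Steps 1--2. The only bookkeeping point is at the very end: undoing the shift from $\norm{t^sB_{\delta,y}^s\e^{-tB_{\delta,y}}}\leq M_2$ as written gives $M_2\e^{2C\delta^2t}$ rather than $M_2\e^{C\delta^2t}$, so to match the stated exponent you should carry the decay $\norm{\e^{-tB_{\delta,y}}}\leq M_0\e^{-C\delta^2t}$ recorded in your Step 1 through the smoothing estimate instead of absorbing it informally (a harmless discrepancy in any case, since all later applications only use a bound of the form $\e^{c_1\delta^2 t}$ with an unspecified universal constant and $\delta^2t\leq T$).
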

Intuitively, letting $\delta\to0$, the semigroup on $\Lambda_{\delta,y}$ will be close to the semigroup on $\R^d$. 
The following auxiliary result states this more precisely. 

\begin{lemma}
	\label{FeynmanKac}
	Let $t>0$, and grant Assumption \ref{ass: total}.
	\begin{itemize}
		\item[$\operatorname{(i)}$] There exist universal constants $c_1,c_2,c_3$ such that, if $z\in C_c(\mathbb{R}^d)$ is supported in $\bigcap_{y\in \mathcal{J}}\Lambda_{\delta,y}$ for some $\delta\geq 0$, then
		\[
		\sup_{y\in \mathcal{J}}\left|(S^\ast_{\vartheta,\delta,y}(t)z)(u)\right|\leq c_3\e^{c_1\delta^2t}(q_{c_2t}\ast|z|)(u),\quad u\in\R^d.
		\]
		\item[$\operatorname{(ii)}$] 
		If $z\in L^2(\mathbb{R}^d)$, then, as $\delta \to 0$,
		\[
		\sup_{y\in \mathcal{J}}\norm{ S^\ast_{\vartheta,\delta,y}(t)(z|_{\Lambda_{\delta,y}})-\e^{ta\Delta}z}_{L^2(\mathbb{R}^d)}\to0.
		\]
		\item[$\operatorname{(iii)}$] 
		If $z\in L^2(\mathbb{R}^d)$, then, for any $t\geq0$,
		\begin{equation*}
			\sup_{y\in \mathcal{J}}\norm{\bar{S}_{\delta,y}(t)z-\e^{ta\Delta}z}_{L^2(\R^d)}\lesssim \delta^{1/2}t^{1/4}\e^{-\delta^{-2}t^{-1}/2}.
		\end{equation*} 
	\end{itemize}
\end{lemma}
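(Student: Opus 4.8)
The plan is to establish the three assertions in turn, using throughout that $\mathcal J$ is a compact subset of the open set $\Lambda$, so that $c_0:=\operatorname{dist}(\mathcal J,\partial\Lambda)>0$ is fixed, the rescaled coefficients $\vartheta(y+\delta\cdot)$ and $(c-\nabla\cdot\vartheta)(y+\delta\cdot)$ are bounded in $L^\infty$ uniformly over $y\in\mathcal J$ and $\delta\le\delta'$ (after a bounded extension of $\vartheta,c$ to $\R^d$), and $\Lambda_{\delta,y}\supset B(0,c_0/\delta)$ for all such $y,\delta$. All three parts reduce to properties of the free heat semigroup $\e^{ta\Delta}$ (convolution with $q_{at}$) via a domain-monotonicity argument for Dirichlet heat kernels — valid also in the presence of the drift by the parabolic maximum principle — and uniformity in $y$ will come precisely from never using $\Lambda_{\delta,y}$ beyond ``$\supset B(0,c_0/\delta)$ and $\subset\R^d$''.

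\emph{Part (i).} Recall $A^\ast_{\vartheta,\delta,y}=a\Delta-\delta\,\vartheta(y+\delta\cdot)\cdot\nabla+\delta^2(c-\nabla\cdot\vartheta)(y+\delta\cdot)$ with Dirichlet conditions on $\Lambda_{\delta,y}$. Writing the bounded potential as $V=\delta^2(c-\nabla\cdot\vartheta)(y+\delta\cdot)\le\delta^2C_0$ and factoring $S^\ast_{\vartheta,\delta,y}(t)=\e^{\delta^2C_0t}\,\widehat S(t)$, where $\widehat S$ is the positivity-preserving, sub-Markovian semigroup with the nonpositive potential $V-\delta^2C_0$, domain monotonicity gives $|S^\ast_{\vartheta,\delta,y}(t)z|\le\e^{\delta^2C_0t}\,\widetilde S(t)|z|$ with $\widetilde S(t)$ the $\R^d$-semigroup of the same differential operator (with $V-\delta^2C_0$). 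The latter is uniformly parabolic with bounded drift of size $O(\delta)$, so the classical Gaussian upper bound (Aronson) bounds its kernel by $c_3'\,\e^{c_1'\delta^2t}\,t^{-d/2}\exp(-|u-v|^2/(c_2t))$, the constants depending only on $d$, $a$ and $\delta\norm{\vartheta}_\infty\le\norm{\vartheta}_\infty$; the factor $\e^{c_1'\delta^2t}$ accounts for re-centring the Gaussian at $u-v=0$ after the drift displacement of size $O(\delta t)$. Since $t^{-d/2}\exp(-|u-v|^2/(c_2t))\lesssim q_{c_2t}(u-v)$, convolving against $|z|$ and absorbing the two exponentials into a single $\e^{c_1\delta^2t}$ yields the claim (equivalently, one runs Feynman--Kac for the diffusion generated by $a\Delta-\delta\vartheta(y+\delta\cdot)\cdot\nabla$, killed at $\partial\Lambda_{\delta,y}$).

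\emph{Part (ii).} I would introduce the full-space semigroup $\widetilde S_{\vartheta,\delta,y}(t)$ generated by $A^\ast_{\vartheta,\delta,y}$ on $L^2(\R^d)$ and split
\[
S^\ast_{\vartheta,\delta,y}(t)(z|_{\Lambda_{\delta,y}})-\e^{ta\Delta}z=\big(S^\ast_{\vartheta,\delta,y}(t)(z|_{\Lambda_{\delta,y}})-\widetilde S_{\vartheta,\delta,y}(t)z\big)+\big(\widetilde S_{\vartheta,\delta,y}(t)z-\e^{ta\Delta}z\big).
\]
For the second summand, Duhamel's formula gives $\widetilde S_{\vartheta,\delta,y}(t)z-\e^{ta\Delta}z=\int_0^t\e^{(t-s)a\Delta}(A^\ast_{\vartheta,\delta,y}-a\Delta)\widetilde S_{\vartheta,\delta,y}(s)z\,\diff s$, and since $A^\ast_{\vartheta,\delta,y}-a\Delta=-\delta\,\vartheta(y+\delta\cdot)\cdot\nabla+\delta^2(c-\nabla\cdot\vartheta)(y+\delta\cdot)$ the integrand is bounded in $L^2$ by $\delta\norm{\vartheta}_\infty\norm{\nabla\widetilde S_{\vartheta,\delta,y}(s)z}_{L^2}+\delta^2\norm{c-\nabla\cdot\vartheta}_\infty\norm{\widetilde S_{\vartheta,\delta,y}(s)z}_{L^2}\lesssim\delta(s^{-1/2}+\delta)\e^{C\delta^2s}\norm{z}_{L^2}$, using the $\R^d$-analogues of Lemmas~\ref{boundS*_01}--\ref{boundS*_s} (analytic smoothing at the integrable cost $s^{-1/2}$); integrating, this summand is $O(\delta)\to0$, uniformly in $y$. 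For the first summand, the drift being bounded, the difference between the Dirichlet and free semigroups is governed by the probability that the diffusion leaves $\Lambda_{\delta,y}\supset B(0,c_0/\delta)$ before time $t$; started from a point of a fixed compact set this is $\le\e^{-c/\delta^2}\to0$ (this is part (iii) with the harmless drift added). For general $z\in L^2(\R^d)$, one proves convergence first for $z\in C_c(\R^d)$ and extends by density, invoking the uniform bound $\sup_{y\in\mathcal J,\,\delta\le\delta'}\norm{S^\ast_{\vartheta,\delta,y}(t)}_{L^2(\Lambda_{\delta,y})}\le M_0\e^{C\delta^2t}$ from Lemma~\ref{boundS*_01}, together with $\norm{z|_{\Lambda_{\delta,y}}}_{L^2}\le\norm{z}_{L^2}$ and boundedness of $\e^{ta\Delta}$.

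\emph{Part (iii) and the main obstacle.} Now $\bar A_{\delta,y}=a\Delta$ is the plain Dirichlet Laplacian on $\Lambda_{\delta,y}$, so $0\le p^D_t(u,v)\le q_{at}(u-v)$; by domain monotonicity $\bar S^B_\delta(t)\le\bar S_{\delta,y}(t)\le\e^{ta\Delta}$ on nonnegative functions, where $\bar S^B_\delta$ is the Dirichlet heat semigroup on $B=B(0,c_0/\delta)$, and (splitting $z=z^+-z^-$) it suffices to compare $\e^{ta\Delta}$ with $\bar S^B_\delta(t)$ on a ball of radius $\asymp\delta^{-1}$. For $u\in B$ the probabilistic representation gives $(\e^{ta\Delta}-\bar S^B_\delta(t))z(u)=\E_u[z(\eta_t)\mathbf 1(\tau_B\le t)]$, with $\eta$ generated by $a\Delta$ and $\tau_B$ its exit time from $B$; Cauchy--Schwarz bounds its square by $\big(\int q_{at}(u-v)|z(v)|^2\diff v\big)\,\P_u(\tau_B\le t)$, and since in the applications $\operatorname{supp}(z)$ is a fixed compact set that, for $\delta$ small, lies at distance $\ge c_0/(2\delta)$ from $\partial B$, the reflection principle yields $\P_u(\tau_B\le t)\lesssim_d\e^{-cc_0^2/(at\delta^2)}$ on (a neighbourhood of) $\operatorname{supp}(z)$, while the contribution from $u$ with $|u-v|$ or $\operatorname{dist}(u,\partial B)$ large is even smaller; taking the $L^2$ norm and tracking the $(at)^{-d/4}$ and $\delta^{-d/2}$ prefactors entering through the normalisation, and noting for $\delta$ small these are dominated by an exponential factor, gives the stated bound of the form $\delta^{1/2}t^{1/4}\e^{-\delta^{-2}t^{-1}/2}$ (with the precise constants depending on $c_0$, $a$, $d$, and the polynomial power being immaterial for the applications, which only ever use the regime where the exponential dominates). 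The main obstacle throughout is uniformity over $y\in\mathcal J$ while the domains $\Lambda_{\delta,y}$ move and blow up; it is resolved, as above, by sandwiching every object between the free semigroup and the Dirichlet semigroup on the fixed-shape ball $B(0,c_0/\delta)$, using that the Gaussian-, Aronson- and reflection-type estimates invoked have constants depending only on $d$, $a$, the uniform sup-norms of $\vartheta$ and $c-\nabla\cdot\vartheta$, and $c_0$.
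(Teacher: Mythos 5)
Your proposal is correct in substance and rests on the same core mechanism as the paper: compare the rescaled Dirichlet semigroups with the free heat semigroup and control the boundary (killing) effect by the exit probability of a Brownian-type path from a ball of radius of order $\delta^{-1}$, which is also exactly how the paper achieves uniformity in $y\in\mathcal J$ (it only uses that a ball $B_{\rho\delta^{-1}}$ sits inside $\bigcap_{y\in\mathcal J}\Lambda_{\delta,y}$). The execution differs in the details. For (i), the paper quotes the Feynman--Kac representation and transition-density bounds from \cite{altmeyer_nonparametric_2020}; your Aronson/parabolic-comparison route is equivalent, as you yourself note. For (ii), the paper stays with the Feynman--Kac formula, splits the error into a drift term, a potential term and a killing term ($T_1,T_2,T_3$), shows pointwise convergence of each, and concludes by dominated convergence against the Gaussian domination from (i); you instead pass through the full-space drift semigroup and a Duhamel bound, which gives the quantitative rate $O(\delta)$ for the coefficient perturbation plus an exponentially small killing term --- a slightly stronger and more self-contained argument, at the cost of invoking $\R^d$-analogues of Lemmas \ref{boundS*_01}--\ref{boundS*_s} (which do hold uniformly in $y$ and $\delta$), and of needing the same ``far starting points'' care that you only spell out in (iii). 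For (iii), the paper bounds the killing term pointwise via the running maximum of Brownian motion, $\lesssim\delta t^{1/2}\e^{-C\delta^{-2}t^{-1}}$, and then interpolates $\norm{\cdot}_{L^2}\leq(\norm{\cdot}_{L^1}\norm{\cdot}_{L^\infty})^{1/2}$ to produce the displayed prefactor $\delta^{1/2}t^{1/4}$; your Cauchy--Schwarz-against-the-heat-kernel argument yields a bound of the same shape but with unspecified constants in the exponent and polynomial prefactor. As you observe, this is harmless for every application (in Lemma \ref{lem: shiftsemigroup} only a bound of the type $\e^{-c\delta^{-2}s^{-1}}\lesssim\delta^{6}s^{3}$ is used), though it does not literally reproduce the constant $1/2$ in the display. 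Finally, your restriction of (iii) to compactly supported $z$ matches how the result is actually proved and used in the paper: the paper's own proof goes through the reduction to $z\in C_c$ made in part (ii), and for a general $L^2$ function with slowly decaying tails an exponential-in-$\delta^{-2}$ bound of this form is not available, so nothing of substance is lost.
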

The action of the semigroup operators $S_{\theta,\delta,y}^\ast(t)$ applied to functions of a certain smoothness and integrability is given in the next lemma. 
The proof relies on the Bessel potential spaces $H_0^{s,p}(\Lambda_{\delta,y})$, $1<p<\infty$, $s\in\R$, defined for $\delta>0$ as the domains of the fractional weighted Dirichlet--Laplacian $(-\bar{A}_{\theta,\delta,y})^{s/2}$ of order $s/2$ on $\Lambda_{\delta,y}$ with norms $\norm{\ \cdot\ }_{H^{s,p}(\Lambda_{\delta,y})}=\norm{(-\bar{A}_{\theta,\delta,y})^{s/2}\cdot}_{L^p(\Lambda_{\delta,y})}$.

\begin{lemma}[Lemma 6.4 of \cite{altmeyer_anisotrop2021}]	\label{boundS*u}
	Let $\delta\in[0,1]$, $t>0$, and grant Assumption \ref{ass: total}. 
	Let $z\in H^{s}_0(\mathbb{R}^d)$, $s\geq0$, be compactly supported in $\bigcap_{y\in \mathcal{J}}\Lambda_{\delta,y}$, and suppose that $V_{\delta,y}\colon L^p(\Lambda_{\delta,y})\to H_0^{-s,p}(\Lambda_{\delta,y})$ are bounded linear operators with $\norm{V_{\delta,y}z}_{H^{-s,p}(\Lambda_{\delta,y})}\leq V_{\operatorname{op}}\norm{z}_{L^p(\Lambda_{\delta,y})}$, for some $V_{\operatorname{op}}$ independent of $\delta$ and $y$. 
	Then, there exists a universal constant $C>0$ such that, for $1< p\leq2$ and $\gamma=(1/p-1/2)d/2$,
	\begin{align*}
		&\sup_{y\in \mathcal{J}}\norm{ S_{\vartheta,\delta,y}^\ast(t) V_{\delta,y}z }_{L^2(\Lambda_{\delta,y})}\leq C\e^{c_1t\delta^2}\sup_{y\in \mathcal{J}}\left(\norm{V_{\delta,y}z}_{L^2(\Lambda_{\delta,y})}\wedge (V_{\operatorname{op}}t^{-s/2-\gamma}\norm{z}_{L^{p}(\Lambda_{\delta,y})})\right),
	\end{align*}
	where  $c_1$ is the constant described in Lemma \ref{FeynmanKac}$\operatorname{(i)}$.
	If $s=0$, the inequality holds also for $p=1$.
\end{lemma}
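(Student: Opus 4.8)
The plan is to deduce the estimate from two mapping properties of the rescaled semigroup, combined through the factorisation $S^\ast_{\vartheta,\delta,y}(t)=S^\ast_{\vartheta,\delta,y}(t/2)\circ S^\ast_{\vartheta,\delta,y}(t/2)$. Write $g\coloneqq V_{\delta,y}z$, so the hypothesis on $V_{\delta,y}$ reads $\norm{g}_{H^{-s,p}(\Lambda_{\delta,y})}\le V_{\mathrm{op}}\norm{z}_{L^p(\Lambda_{\delta,y})}$. One entry of the minimum is immediate: by Lemma~\ref{boundS*_01}, $\norm{S^\ast_{\vartheta,\delta,y}(t)g}_{L^2(\Lambda_{\delta,y})}\le M_0\e^{C\delta^2t}\norm{g}_{L^2(\Lambda_{\delta,y})}=M_0\e^{C\delta^2t}\norm{V_{\delta,y}z}_{L^2(\Lambda_{\delta,y})}$. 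The other entry follows once I establish the smoothing bound $\norm{S^\ast_{\vartheta,\delta,y}(t)g}_{L^2(\Lambda_{\delta,y})}\lesssim\e^{c_1\delta^2t}\,t^{-s/2-\gamma}\norm{g}_{H^{-s,p}(\Lambda_{\delta,y})}$, whose right-hand side is $\le\e^{c_1\delta^2t}\,t^{-s/2-\gamma}V_{\mathrm{op}}\norm{z}_{L^p(\Lambda_{\delta,y})}$. Taking the minimum and then the supremum over $y\in\mathcal J$ — legitimate since the constants in Lemmas~\ref{boundS*_01},~\ref{boundS*_s},~\ref{FeynmanKac} are uniform in $y$ — gives the claim, the resulting constant $C$ depending only on $d$, $a$ and sectoriality data, hence universal.

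For the second factor of the splitting I would pass to the free heat kernel via Lemma~\ref{FeynmanKac}(i): for $w\in C_c(\R^d)$ supported in $\bigcap_{y\in\mathcal J}\Lambda_{\delta,y}$ one has $\sup_{y\in\mathcal J}\lvert(S^\ast_{\vartheta,\delta,y}(t/2)w)(u)\rvert\le c_3\e^{c_1\delta^2t}(q_{c_2t/2}\ast\lvert w\rvert)(u)$, which extends by density to $w\in L^p(\Lambda_{\delta,y})$, extended by zero. Young's inequality with $1/r=3/2-1/p$ then yields $\norm{S^\ast_{\vartheta,\delta,y}(t/2)w}_{L^2(\R^d)}\le c_3\e^{c_1\delta^2t}\norm{q_{c_2t/2}}_{L^r(\R^d)}\norm{w}_{L^p}\lesssim\e^{c_1\delta^2t}\,t^{-\gamma}\norm{w}_{L^p}$, since $(1-1/r)d/2=(1/p-1/2)d/2=\gamma$. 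For $s=0$ this is already the entire argument, and it also covers $p=1$ (with $\gamma=d/4$).

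For the first factor I would use the analytic smoothing of $S^\ast_{\vartheta,\delta,y}$ in $L^p$. The Bessel-potential norm being defined through the Dirichlet Laplacian $-\bar A_{\delta,y}=-a\Delta$, I write $g=(-\bar A_{\delta,y})^{s/2}f$ with $\norm{f}_{L^p}=\norm{g}_{H^{-s,p}(\Lambda_{\delta,y})}$ and set $\tilde f\coloneqq(2C\delta^2-A^\ast_{\vartheta,\delta,y})^{-s/2}(-\bar A_{\delta,y})^{s/2}f$, so that $g=(2C\delta^2-A^\ast_{\vartheta,\delta,y})^{s/2}\tilde f$ and $\norm{\tilde f}_{L^p}\lesssim\norm{f}_{L^p}$ provided the shifted generators $2C\delta^2-A^\ast_{\vartheta,\delta,y}$ and $-\bar A_{\delta,y}$ have uniformly comparable fractional powers on $L^p(\Lambda_{\delta,y})$. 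Since $S^\ast_{\vartheta,\delta,y}$ commutes with fractional powers of its own generator, $S^\ast_{\vartheta,\delta,y}(t/2)g=(2C\delta^2-A^\ast_{\vartheta,\delta,y})^{s/2}S^\ast_{\vartheta,\delta,y}(t/2)\tilde f$, and an $L^p$-version of Lemma~\ref{boundS*_s} gives $\norm{S^\ast_{\vartheta,\delta,y}(t/2)g}_{L^p}\lesssim\e^{c_1\delta^2t}\,t^{-s/2}\norm{\tilde f}_{L^p}\lesssim\e^{c_1\delta^2t}\,t^{-s/2}\norm{g}_{H^{-s,p}}$. Feeding $w=S^\ast_{\vartheta,\delta,y}(t/2)g$ into the $L^p\to L^2$ estimate of the previous paragraph produces the desired smoothing bound.

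The main obstacle is exactly the pair of $L^p$-facts used above for $1<p<2$: the uniform comparison $\sup_{y\in\mathcal J}\norm{(2C\delta^2-A^\ast_{\vartheta,\delta,y})^{-s/2}(-\bar A_{\delta,y})^{s/2}}_{L^p\to L^p}\lesssim1$ and the uniform bounded analyticity of $S^\ast_{\vartheta,\delta,y}$ on $L^p(\Lambda_{\delta,y})$ (the $L^p$-analogue of Lemma~\ref{boundS*_s}), both uniform in $y\in\mathcal J$ and over the domains $\Lambda_{\delta,y}$ that exhaust $\R^d$ as $\delta\to0$. For $p=2$ one has $\gamma=0$ and Lemma~\ref{boundS*_s} applies verbatim, so nothing new is needed; for $1<p<2$ the comparison is a Kato-type domain-of-fractional-powers statement. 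I would prove it perturbatively: since $A^\ast_{\vartheta,\delta,y}-a\Delta=-\delta\,\vartheta(y+\delta\,\cdot)\cdot\nabla-\delta^2\varphi_\vartheta(y+\delta\,\cdot)$ is first order with $O(\delta)$ coefficients plus an $O(\delta^2)$ zeroth-order term, it has small relative bound with respect to $a\Delta$, so a Duhamel/Neumann expansion of $S^\ast_{\vartheta,\delta,y}$ around $\e^{ta\Delta}$ — equivalently around $\bar S_{\delta,y}$, whose closeness to $\e^{ta\Delta}$ is quantified by Lemma~\ref{FeynmanKac}(iii) — converges with $y$- and $\delta$-uniform constants, reducing all fractional-power and smoothing bookkeeping to the Dirichlet Laplacian $-\bar A_{\delta,y}$ and, at $\delta=0$, to the free Laplacian on $\R^d$, for which the mapping $\e^{ta\Delta}\colon H^{-s,p}_0\to L^2$ with norm $\lesssim t^{-s/2-\gamma}$ is classical and, by parabolic scaling, independent of the domain.
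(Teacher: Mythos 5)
Your overall strategy is essentially the paper's: the $L^2$ entry of the minimum comes from Lemma \ref{boundS*_01}, the $s=0$ case (including the $p=1$ supplement) is Lemma \ref{FeynmanKac}(i) plus Young's inequality with exactly your exponent bookkeeping, and the $s>0$ case is handled by moving a fractional power of the shifted generator through the semigroup and converting the $H^{-s,p}$-norm into an $L^p$-norm via the Bessel-potential definition. The difference is the order in which you do this. You commute $(2C\delta^2-A^\ast_{\vartheta,\delta,y})^{s/2}$ through in $L^p$, which forces you to assume two uniform $L^p$-facts: comparability of the fractional powers of $2C\delta^2-A^\ast_{\vartheta,\delta,y}$ and $-\bar{A}_{\delta,y}$ on $L^p$, \emph{and} an $L^p$-analogue of the smoothing bound of Lemma \ref{boundS*_s}. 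The paper instead writes $\norm{S^\ast_{\vartheta,\delta,y}(t)u}_{L^2}=\norm{(2C\delta^2-A^\ast_{\vartheta,\delta,y})^{s/2}S^\ast_{\vartheta,\delta,y}(t/2)\,S^\ast_{\vartheta,\delta,y}(t/2)(2C\delta^2-A^\ast_{\vartheta,\delta,y})^{-s/2}u}_{L^2}$, bounds the outer factor by $t^{-s/2}\e^{C\delta^2t}$ using the $L^2$-smoothing Lemma \ref{boundS*_s} as it stands, applies the $s=0$ ($L^p\to L^2$) estimate to $S^\ast_{\vartheta,\delta,y}(t/2)(2C\delta^2-A^\ast_{\vartheta,\delta,y})^{-s/2}u$, and only then invokes the single comparison $\norm{(2C\delta^2-A^\ast_{\vartheta,\delta,y})^{-s/2}u}_{L^p}\lesssim\norm{(-\bar{A}_{\delta,y})^{-s/2}u}_{L^p}=\norm{u}_{H^{-s,p}(\Lambda_{\delta,y})}$. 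So the ``main obstacle'' you flag is genuinely present -- it is precisely the one-line fractional-power comparison the paper also uses without your proposed Duhamel/Neumann justification -- but your arrangement makes the task strictly harder than necessary: uniform bounded analyticity of $S^\ast_{\vartheta,\delta,y}$ on $L^p$ over the growing domains is never needed if the fractional power is extracted in $L^2$ as above. With that reordering your argument collapses onto the paper's proof; as written, the two unproved $L^p$ ingredients remain sketches, though the perturbative route you outline (first-order perturbation with $O(\delta)$ coefficients around the Dirichlet Laplacian) is the natural way to substantiate the comparison if one wanted full detail.
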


\subsection{Properties of multiple local measurements}\label{sec: properties measurements}
For the reader's convenience, we give the result of \cite{altmeyer_anisotrop2021}, specifying the covariance function of the Gaussian process defined in \eqref{eq: weaksol}.

\begin{lemma}[Lemma 6.5 of \cite{altmeyer_anisotrop2021}]\label{lem:covFun}
	\begin{enumerate}
		\item[$\operatorname{(i)}$] 
		If $X_0=0$, then the Gaussian process from \eqref{eq: weaksol} has mean zero and covariance function
		\[
		\operatorname{Cov}(\sc{X(t)}{z},\sc{X(t')}{z'}) = \int_0^{t\wedge t'} \sc{ S^\ast_{\theta}(t-s)z}{ S^\ast_{\theta}(t'-s)z'}\diff s.
		\]
		\item[$\operatorname{(ii)}$] 
		If $X_0$ is the stationary initial condition from Assumption \ref{ass: total}(iv), then the Gaussian process from \eqref{eq: weaksol} has mean zero and covariance function
		\begin{align*}
			&\operatorname{Cov}(\sc{X(t)}{z},\sc{X(t')}{z'}) = \int_0^{\infty} \sc{ S^\ast_{\theta}(t+s)z}{ S^\ast_{\theta}(t'+s)z'}\diff s.
		\end{align*}
	\end{enumerate}
\end{lemma}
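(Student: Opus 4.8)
The plan is to reduce Lemma~\ref{lem:covFun} to the mild (stochastic convolution) form of the weak solution and then to read the covariance off the It\^o isometry for the cylindrical noise $W$; this essentially reproduces the argument behind \cite[Lemma~6.5]{altmeyer_anisotrop2021}. As a preliminary step I would record that, for every $z\in L^2(\Lambda)$ and $0\le t\le T$, the process characterised by \eqref{eq: weaksol} satisfies
\[
\sc{X(t)}{z}=\sc{S_\theta(t)X_0}{z}+\int_0^t\sc{S_\theta^\ast(t-s)z}{\diff W(s)},
\]
the stochastic integral being a scalar Wiener integral against $W$ with deterministic $L^2(\Lambda)$-valued integrand $s\mapsto S_\theta^\ast(t-s)z$, which is strongly continuous and bounded on $[0,t]$ since $(S_\theta^\ast(t))_{t\ge0}$ is a $C_0$-semigroup. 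This representation is classical for analytic semigroups driven by cylindrical noise (see \cite[Proposition~2.1]{altmeyer_nonparametric_2020} and \cite{da_prato_stochastic_2014}) and extends from test functions $z\in H_0^1(\Lambda)\cap H^2(\Lambda)$ to all $z\in L^2(\Lambda)$ by density; granting it, the mean-zero claim is immediate, because Wiener integrals of deterministic integrands are centred Gaussian and $X_0$ is itself centred (deterministic in one case, the centred Gaussian variable of Assumption~\ref{ass: total}(iv) in the other), so $(\sc{X(t)}{z})_{0\le t\le T,\,z\in L^2(\Lambda)}$ is a centred Gaussian family.

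For part~(i), where $X_0=0$, the first summand vanishes and it remains to apply the It\^o isometry: for deterministic $f,g\in L^2([0,T];L^2(\Lambda))$ one has $\E\bigl[\int_0^t\sc{f(s)}{\diff W(s)}\int_0^{t'}\sc{g(s)}{\diff W(s)}\bigr]=\int_0^{t\wedge t'}\sc{f(s)}{g(s)}\,\diff s$. Taking $f(s)=S_\theta^\ast(t-s)z$ and $g(s)=S_\theta^\ast(t'-s)z'$ gives the asserted formula verbatim, and only the $L^2(\Lambda)$-boundedness of $S_\theta^\ast(u)$ is used here.

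For part~(ii) I would extend $W$ to a two-sided cylindrical Brownian motion on $\R$, so that the stationary initial law of Assumption~\ref{ass: total}(iv) is realised as $X_0=\int_{-\infty}^0 S_\theta(-s)\,\diff W(s)$; the semigroup law then gives $S_\theta(t)X_0=\int_{-\infty}^0 S_\theta(t-s)\,\diff W(s)$, and adding the forward stochastic convolution yields, after pairing with $z$, the identity $\sc{X(t)}{z}=\int_{-\infty}^{t}\sc{S_\theta^\ast(t-s)z}{\diff W(s)}$. Applying the It\^o isometry on $(-\infty,t\wedge t']$ produces $\int_{-\infty}^{t\wedge t'}\sc{S_\theta^\ast(t-s)z}{S_\theta^\ast(t'-s)z'}\,\diff s$, and a change of variables turns this into the stated expression. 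I expect the only genuine difficulty to lie precisely in making these infinite-horizon integrals rigorous: this is where the standing hypothesis $c-\nabla\cdot\theta\le\gamma<0$ enters, forcing the spectral bound of $A_\theta^\ast$ to be strictly negative, so that $\norm{S_\theta^\ast(u)}_{L^2(\Lambda)\to L^2(\Lambda)}$ decays exponentially in $u$ and $\int_{-\infty}^{t}\norm{S_\theta^\ast(t-s)z}_{L^2(\Lambda)}^2\,\diff s<\infty$; this makes the two-sided stochastic convolution well defined, independent of the chosen extension of $W$, and legitimises pulling $S_\theta(t)$ through the integral. Everything else is routine bookkeeping, and the result follows along the same lines as \cite[Lemma~6.5]{altmeyer_anisotrop2021}.
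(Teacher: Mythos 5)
Part (i) of your argument is fine, and since the paper itself does not prove this lemma (it is quoted from \cite{altmeyer_anisotrop2021}), your proposal has to stand on its own merits. The problem is the final step of part (ii). From $\sc{X(t)}{z}=\int_{-\infty}^{t}\sc{S_\theta^\ast(t-s)z}{\diff W(s)}$ the It\^o isometry does give
\begin{equation*}
\operatorname{Cov}\bigl(\sc{X(t)}{z},\sc{X(t')}{z'}\bigr)=\int_{-\infty}^{t\wedge t'}\sc{S_\theta^\ast(t-s)z}{S_\theta^\ast(t'-s)z'}\diff s,
\end{equation*}
but no change of variables turns this into the displayed expression of part (ii). Splitting the integral at $s=0$ and substituting $s=-u$ on $(-\infty,0]$ shows that it equals
\begin{equation*}
\int_{0}^{\infty}\sc{S_\theta^\ast(t+u)z}{S_\theta^\ast(t'+u)z'}\diff u+\int_{0}^{t\wedge t'}\sc{S_\theta^\ast(t-s)z}{S_\theta^\ast(t'-s)z'}\diff s,
\end{equation*}
i.e.\ the (ii)-display \emph{plus} the (i)-display. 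The second term cannot be absorbed by any substitution: a stationary covariance must depend on $t,t'$ only through $t-t'$, and the (ii)-integral alone does not (already for the scalar Ornstein--Uhlenbeck equation $\diff X=-\lambda X\diff t+\diff W$ it gives $\e^{-\lambda(t+t')}/(2\lambda)$, whereas the stationary covariance is $\e^{-\lambda|t-t'|}/(2\lambda)$, which is exactly the sum of the two terms). So your computation, which is essentially correct up to that point, proves a formula different from the one you claim to have reached; the sentence ``a change of variables turns this into the stated expression'' is false as it stands.

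What the (ii)-integral by itself represents is the contribution of the initial condition: writing $X(t)=S_\theta(t)X_0+\bar X(t)$ with $X_0$ independent of the forward noise, one has $\E[\sc{X_0}{u}\sc{X_0}{v}]=\int_0^\infty\sc{S_\theta^\ast(s)u}{S_\theta^\ast(s)v}\diff s$, and hence, using $S_\theta^\ast(s)S_\theta^\ast(t)=S_\theta^\ast(t+s)$, $\operatorname{Cov}(\sc{S_\theta(t)X_0}{z},\sc{S_\theta(t')X_0}{z'})=\int_0^\infty\sc{S_\theta^\ast(t+s)z}{S_\theta^\ast(t'+s)z'}\diff s$, while $\bar X$ contributes precisely the part-(i) term; this decomposition is also how the stationary covariance is actually used in the paper (see the stationary case in the proof of Proposition \ref{prop: splittingrest}). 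To close the gap you should either carry the forward-convolution term along, so that the covariance in the stationary case is the sum of the two displays, or explicitly point out that the (ii)-display as printed can only be the $S_\theta(t)X_0$-contribution; as written, your proof silently drops this term.
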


\begin{lemma}
	\label{lem: ConvFisher}
	Grant Assumption \ref{ass: total}, and consider $u,w\in\{D^\alpha K:|\alpha|\leq2\}=\{-D^\alpha \Delta\bar{K}:|\alpha|\leq2\}$. 
	Let $X_0=0$, and set $f_0(t)\coloneqq \sc{\e^{ta\Delta}u}{\e^{ta\Delta}w}_{L^2(\R^d)}$.
	Then, the following properties hold true:
	\begin{enumerate}
		\item[$\operatorname{(i)}$] $\psi({u},{w}) = \int_0^{\infty}f_0(t)\diff t$ is well-defined, i.e., $f_0\in L^1([0,\infty))$.
		\item[$\operatorname{(ii)}$] For $\delta\to0$, 
		\[
		\sup_{y\in \mathcal{J}}\left|\delta^{-2}\int_{0}^{T}\mathrm{Cov}(\langle X(t),u_{\delta,y}\rangle,\langle X(t),w_{\delta,y}\rangle)\diff t-T\psi({u},{w})\right|\to0.
		\]
		\item[$\operatorname{(iii)}$] If, additionally, $\psi({u},{w})=0$, then 
		\[
		\sup_{y\in \mathcal{J}}\left|\delta^{-3}\int_{0}^{T}\mathrm{Cov}(\langle X(t),u_{\delta,y}\rangle,\langle X(t),w_{\delta,y}\rangle)\diff t\right|\lesssim 1.
		\]
	\end{enumerate}
\end{lemma}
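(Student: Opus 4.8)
The plan is to treat the three assertions separately, using the whole-space heat semigroup $\e^{ta\Delta}$ as the reference object and transferring the covariances to the rescaled picture on $\Lambda_{\delta,y}$.

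\emph{Part (i).} I would argue on the Fourier side. Since $K=-\Delta\bar K$ we have $\hat u(\xi)=(i\xi)^{\alpha_u}|\xi|^2\hat{\bar K}(\xi)$, and likewise for $w$, so by Plancherel and Tonelli
\[
\int_0^\infty|f_0(t)|\,\diff t\le(2\pi)^{-d}\int_{\R^d}\Bigl(\int_0^\infty\e^{-2ta|\xi|^2}\,\diff t\Bigr)|\hat u(\xi)|\,|\hat w(\xi)|\,\diff\xi=(2\pi)^{-d}\int_{\R^d}\frac{|\hat u(\xi)|\,|\hat w(\xi)|}{2a|\xi|^2}\,\diff\xi.
\]
The last integrand is bounded by $|\xi|^{|\alpha_u|+|\alpha_w|+2}|\hat{\bar K}(\xi)|^2$, hence by $|\hat{\bar K}(\xi)|^2$ for $|\xi|\le1$ (integrable as $\bar K\in L^2$) and by $|\xi|^{6}|\hat{\bar K}(\xi)|^2$ for $|\xi|\ge1$ (integrable as $\bar K\in H^4$ and $|\alpha_u|+|\alpha_w|\le4$), so $f_0\in L^1([0,\infty))$. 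The same identity shows, because $|\hat{\bar K}|^2$ is even by Assumption \ref{ass: total}(ii), that $\psi(u,w)=0$ if and only if $\alpha_u+\alpha_w$ has an odd component, equivalently $f_0\equiv0$; and then $\alpha_u,\alpha_w$ are not both the zero multi-index, so at least one of $D^{\alpha_u}\bar K,D^{\alpha_w}\bar K$ has vanishing integral over $\R^d$. I would record these observations for Part (iii).

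\emph{Reduction and Part (ii).} For $\delta$ small enough that $\mathrm{supp}(u),\mathrm{supp}(w)\subset\Lambda_{\delta,y}$ for all $y\in\mathcal J$, Lemma \ref{lem:covFun}(i) (as $X_0=0$), the substitution $r=t-s$, Lemma \ref{rescaledsemigroup}(ii) together with the $L^2$-isometry $\langle z_{\delta,y},z'_{\delta,y}\rangle_{L^2(\Lambda)}=\langle z,z'\rangle_{L^2(\Lambda_{\delta,y})}$, and Fubini yield
\[
\delta^{-2}\int_0^T\mathrm{Cov}(\langle X(t),u_{\delta,y}\rangle,\langle X(t),w_{\delta,y}\rangle)\,\diff t=\int_0^{T\delta^{-2}}(T-\rho\delta^2)\,\langle S^\ast_{\theta,\delta,y}(\rho)u,S^\ast_{\theta,\delta,y}(\rho)w\rangle_{L^2(\Lambda_{\delta,y})}\,\diff\rho.
\]
For (ii), applying Lemma \ref{boundS*u} with $s=2,\,p=1$, $V_{\delta,y}=-\Delta$ and $z=D^{\alpha_u}\bar K$ (extracting the Laplacian from $u=-\Delta(D^{\alpha_u}\bar K)$) gives $\sup_y\|S^\ast_{\theta,\delta,y}(\rho)u\|_{L^2}\lesssim\e^{c_1\delta^2\rho}(1\wedge\rho^{-1-d/4})$, whence the integrand above is dominated on $[0,T\delta^{-2}]$ by $T\e^{2c_1T}C(1\wedge\rho^{-2-d/2})\in L^1([0,\infty))$, uniformly in small $\delta$ and $y$. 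Since Lemma \ref{FeynmanKac}(ii) and Lemma \ref{boundS*_01} give $\sup_{y\in\mathcal J}|\langle S^\ast_{\theta,\delta,y}(\rho)u,S^\ast_{\theta,\delta,y}(\rho)w\rangle-f_0(\rho)|\to0$ for each fixed $\rho>0$, dominated convergence applied to $\rho\mapsto\sup_{y\in\mathcal J}|(T-\rho\delta^2)\mathbf 1(\rho\le T\delta^{-2})\langle S^\ast_{\theta,\delta,y}(\rho)u,S^\ast_{\theta,\delta,y}(\rho)w\rangle-Tf_0(\rho)|$ proves (ii).

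\emph{Part (iii).} Now $\psi(u,w)=0$, so $f_0\equiv0$ and, relabelling if necessary, $w=-\Delta(D^{\alpha_w}\bar K)$ with $\int D^{\alpha_w}\bar K=0$ and $1\le|\alpha_w|\le2$; Lemma \ref{boundS*u} with $s=|\alpha_w|+2\ge3$ (this is exactly where $\bar K\in H^4$ enters) then gives the improved bound $\sup_y\|S^\ast_{\theta,\delta,y}(\rho)w\|_{L^2}\lesssim\e^{c_1\delta^2\rho}(1\wedge\rho^{-3/2-d/4})$, and likewise for $\bar S_{\delta,y}(\rho)w$. Using $f_0\equiv0$ I would split the inner product from the reduction as
\[
\langle S^\ast_{\theta,\delta,y}(\rho)u,S^\ast_{\theta,\delta,y}(\rho)w\rangle=\Bigl[\langle S^\ast_{\theta,\delta,y}(\rho)u,S^\ast_{\theta,\delta,y}(\rho)w\rangle-\langle\bar S_{\delta,y}(\rho)u,\bar S_{\delta,y}(\rho)w\rangle\Bigr]+\langle\bar S_{\delta,y}(\rho)u,\bar S_{\delta,y}(\rho)w\rangle.
\]
For the bracket, Duhamel between $S^\ast_{\theta,\delta,y}$ and $\bar S_{\delta,y}$ (their generators differ by $\delta B_{\delta,y}$ with $B_{\delta,y}z=-\theta(y+\delta\,\cdot)\cdot\nabla z+\delta(c-\nabla\cdot\theta)(y+\delta\,\cdot)z$ and $\sup_{\delta,y}\|B_{\delta,y}\|_{L^2\to H^{-1}}<\infty$), the smoothing bound $\|\bar S_{\delta,y}(\tau)\|_{H^{-1}\to L^2}\lesssim\tau^{-1/2}\e^{C\delta^2\tau}$, and the decay estimates above give $\sup_y\|S^\ast_{\theta,\delta,y}(\rho)u-\bar S_{\delta,y}(\rho)u\|_{L^2}\lesssim\delta\,\e^{C'\delta^2\rho}(1\wedge\rho^{-1/2})$, so the bracket integrates to $O(\delta)$ over $[0,T\delta^{-2}]$. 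For the remaining term, again by $f_0\equiv0$, $\langle\bar S_{\delta,y}(\rho)u,\bar S_{\delta,y}(\rho)w\rangle=\langle\bar S_{\delta,y}(\rho)u-\e^{\rho a\Delta}u,\bar S_{\delta,y}(\rho)w\rangle+\langle\e^{\rho a\Delta}u,\bar S_{\delta,y}(\rho)w-\e^{\rho a\Delta}w\rangle$; for $\rho\le\delta^{-1}$ the factor $\|\bar S_{\delta,y}(\rho)z-\e^{\rho a\Delta}z\|_{L^2}\lesssim\delta^{1/2}\rho^{1/4}\e^{-1/(2\delta^2\rho)}$ from Lemma \ref{FeynmanKac}(iii) is $\lesssim\delta^{1/4}\e^{-1/(2\delta)}$ (super-polynomially small), while for $\rho\ge\delta^{-1}$ I would bound that difference crudely by $\|\bar S_{\delta,y}(\rho)z\|+\|\e^{\rho a\Delta}z\|$ and use the $\rho^{-3/2-d/4}$ decay of the $w$-factor against the $\rho^{-1-d/4}$ decay of the $u$-factor, making the integrand $\lesssim\rho^{-5/2-d/2}$ with integral $\lesssim\delta^{3/2+d/2}$ over $[\delta^{-1},\infty)$. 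Since $0\le T-\rho\delta^2\le T$ on the range of integration, combining all contributions gives $\bigl|\int_0^{T\delta^{-2}}(T-\rho\delta^2)\langle S^\ast_{\theta,\delta,y}(\rho)u,S^\ast_{\theta,\delta,y}(\rho)w\rangle\,\diff\rho\bigr|\lesssim\delta$ uniformly in $y$, which is (iii).

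\emph{Main obstacle.} The delicate point is obtaining the extra factor $\delta$ in (iii). One cannot run Lemma \ref{FeynmanKac}(iii) on the whole interval $[0,T\delta^{-2}]$, since there $\delta^{1/2}\rho^{1/4}\e^{-1/(2\delta^2\rho)}$ is only of order one; nor can one dispense with the (anti-)symmetry of $\bar K$, which simultaneously forces $f_0\equiv0$ and the mean-zero property of the relevant $D^{\alpha}\bar K$ — the latter producing the additional half-powers of $\rho^{-1}$ that make the large-$\rho$ integral converge after summing only up to $T\delta^{-2}$. Choosing the split at $\rho=\delta^{-1}$ and bookkeeping which of $u,w$ carries the improved decay is the crux; the remaining steps, and the uniformity in $y\in\mathcal J$, are supplied by the already-established lemmas.
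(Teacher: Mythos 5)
Your parts (i) and (ii) are fine: (i) via Plancherel is a valid alternative to the paper's route (which instead applies Lemma \ref{boundS*u} with $s=2$ to get $|f_0(t)|\lesssim 1\wedge t^{-2}$), and (ii) is essentially the paper's argument (covariance formula, rescaling, domination by the Lemma \ref{boundS*u} bound, pointwise convergence from Lemma \ref{FeynmanKac}(ii), dominated convergence). Two small technical points there: Lemma \ref{boundS*u} only allows $p=1$ when $s=0$, so for $s=2$ you must take $p>1$ and accept the $\varepsilon$-loss $t^{-1-d/4+\varepsilon}$ (harmless); and the ``improved decay'' you invoke later for $w$ with $|\alpha_w|\ge 1$ should be justified by factoring out one more derivative, i.e.\ writing $w=-\Delta\partial_j(D^{\alpha_w-e_j}\bar K)$ and applying Lemma \ref{boundS*u} with a third-order $V_{\delta,y}$, not by ``mean zero'' per se.

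The genuine gap is in (iii), in the claim ``$\psi(u,w)=0$ if and only if $\alpha_u+\alpha_w$ has an odd component, equivalently $f_0\equiv0$''. Assumption \ref{ass: total}(ii) only makes $\bar K$ even or odd under the full reflection $x\mapsto-x$, so $|\hat{\bar K}|^2$ is even under $\xi\mapsto-\xi$ but in general \emph{not} even in each coordinate separately; the parity argument therefore only kills the Fourier integral when $|\alpha_u|+|\alpha_w|$ is odd. The hypothesis of (iii) is merely that the single number $\psi(u,w)=\int_0^\infty f_0$ vanishes, and this does not force the function $f_0$ to vanish: e.g.\ for $d\ge2$ and a non-coordinatewise-symmetric even $\bar K$ one can have $\psi(\partial_1K,\partial_2K)=0$ while $f_0\not\equiv0$, because $\psi$ and $f_0(t)$ weight $\hat u\overline{\hat w}$ by different radial factors. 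Since your whole argument for (iii) consists of peeling off $O(\delta)$ corrections and then \emph{dropping} the pure heat-semigroup term on the grounds that it is identically zero, the proof as written fails for admissible pairs with $\psi=0$ but $f_0\not\equiv0$. The good news is that the repair is short and stays inside your framework: the dropped contribution is
\begin{align*}
\int_0^{T\delta^{-2}}(T-\rho\delta^2)f_0(\rho)\,\diff\rho
=-T\int_{T\delta^{-2}}^{\infty}f_0(\rho)\,\diff\rho-\delta^2\int_0^{T\delta^{-2}}\rho f_0(\rho)\,\diff\rho ,
\end{align*}
using only $\int_0^\infty f_0=0$, and both terms are $O(\delta^2)$ by the decay $|f_0(\rho)|\lesssim 1\wedge\rho^{-2-d/2+\varepsilon}$. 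For comparison, the paper avoids the issue entirely: after the $S^*\to\bar S$ reduction (done there by an adjoint/integration-by-parts trick rather than your Duhamel-plus-$H^{-1}$ smoothing, both of which work), it integrates $\bar f$ in time exactly, using self-adjointness of $\bar S_{\delta,y}$ and $u=(-\bar A_{\delta,y})(D^{\alpha_u}\bar K)/a$, so that the main term is literally $\psi(u,w)=0$ and only an $O(\delta^2)$ boundary term remains — no statement about $f_0$ itself is needed.
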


\begin{lemma}
	\label{ConvouterVar}
	Grant Assumption \ref{ass: total}, and let $X_0=0$.
	\begin{itemize}
		\item[$\operatorname{(i)}$] 
		For $u,w\in\{D^\alpha K:|\alpha|\leq2\}$,
		\[ 
		\sup_{y\in \mathcal{J}}\mathrm{Var}\left(\int_{0}^{T}\langle X(t),u_{\delta,y}\rangle\langle X(t),w_{\delta,y}\rangle \diff t\right)=O(\delta^6).
		\]
		\item[$\operatorname{(ii)}$]  
		For $u\in\{\partial_iK:1\leq i\leq d\}$ and $w \coloneqq g^{(\theta,y,\delta)}\cdot \nabla K$, with $g^{(\theta,y,\delta)}$ defined in \eqref{eq: functiongtotal} below, it holds
		\[ 
		{\sup_{x\in\mathcal{J}}}\sup_{y\in \mathcal{J},|y-x|\leq h}\mathrm{Var}\left(\int_{0}^{T}\langle X(t),u_{\delta,y}\rangle\langle X(t),w_{\delta,y}\rangle \diff t\right)=O(\delta^4 h^{2\beta}).
		\]
		\item[$\operatorname{(iii)}$] 
		For $u\in\{\partial_iK:1\leq i\leq d\}$ and $w \coloneqq \varphi_\theta(y+\delta\cdot)K$, we have
		\[
		{\sup_{x\in\mathcal{J}}}\sup_{y\in \mathcal{J},|y-x|\leq h}\mathrm{Var}\left(\int_{0}^{T}\langle X(t),u_{\delta,y}\rangle\langle X(t),w_{\delta,y}\rangle \diff t\right)=O(\delta^2 h^{2\beta}).
		\]
	\end{itemize}
\end{lemma}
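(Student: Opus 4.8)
The plan is to reduce the variance to deterministic estimates on the rescaled semigroup and then carry out the time integrations. Since $X_0=0$, the field $(\sc{X(t)}{z})_{t,z}$ is centred Gaussian with covariance given by Lemma~\ref{lem:covFun}(i). Applying Isserlis' formula to the fourth mixed moment of $g(t)\coloneqq\sc{X(t)}{u_{\delta,y}}\sc{X(t)}{w_{\delta,y}}$, the contraction pairing $t$ with $t$ and $t'$ with $t'$ contributes exactly $(\E g(t))(\E g(t'))$ and thus drops out of the covariance, leaving
\[
\mathrm{Var}\Bigl(\int_0^T g(t)\diff t\Bigr)=\int_0^T\int_0^T\bigl[C_{uu}(t,t')C_{ww}(t,t')+C_{uw}(t,t')C_{uw}(t',t)\bigr]\diff t\diff t',
\]
where $C_{ab}(t,t')\coloneqq\int_0^{t\wedge t'}\sc{S^\ast_\theta(t-s)a_{\delta,y}}{S^\ast_\theta(t'-s)b_{\delta,y}}\diff s$. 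By Lemma~\ref{rescaledsemigroup}(ii) and the fact that $z\mapsto z_{\delta,y}$ is an $L^2$-isometry, each $C_{ab}$ can be rewritten through $S^\ast_{\theta,\delta,y}$ on $L^2(\Lambda_{\delta,y})$, already yielding one factor $\delta^2$ from the substitution $s\mapsto s\delta^{-2}$; the cross term $C_{uw}(t,t')C_{uw}(t',t)$ is treated exactly like $C_{uu}C_{ww}$.

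The deterministic heart is a uniform-in-$y\in\mathcal{J}$ control of $\norm{S^\ast_\theta(r)a_{\delta,y}}_{L^2}$ and of the time-integrated operators $\int_r^R S^\ast_\theta(\tau)\diff\tau=(S^\ast_\theta(r)-S^\ast_\theta(R))(-A^\ast_\theta)^{-1}$. Here Assumption~\ref{ass: total}(ii) is the key: for $|\alpha|\le2$ the localisation satisfies $(D^\alpha K)_{\delta,y}=-\delta^{2}\Delta\bigl((D^\alpha\bar K)_{\delta,y}\bigr)$, and commuting the Laplacian through the semigroup via $a\Delta=A^\ast_\theta+(\text{lower-order terms})$ — combined with the analyticity estimates of Lemmas~\ref{boundS*_01}--\ref{boundS*u} and the $L^1\to L^2$ smoothing of Lemma~\ref{FeynmanKac}(i) — extracts a factor $\delta^2$ for each such localised function; in particular, using Lemma~\ref{rescaledsemigroup}(i), $\norm{(-A^\ast_\theta)^{-1}u_{\delta,y}}_{L^2}=\delta^2\norm{(-A^\ast_{\theta,\delta,y})^{-1}u}_{L^2(\Lambda_{\delta,y})}=O(\delta^2)$ uniformly in $y$, since the $\bar K$-structure makes the rescaled resolvent bounded uniformly in $\delta$ even as $\Lambda_{\delta,y}$ grows to $\R^d$.

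For (i), inserting these bounds and applying Fubini in $(s,t,t')$: the resolvent identity supplies one factor $\delta^2$, the norm $\norm{(-A^\ast_\theta)^{-1}a_{\delta,y}}_{L^2}$ a second, and $\int_0^T\norm{S^\ast_\theta(r)a_{\delta,y}}_{L^2}\diff r=O(\delta^2)$ a third, so $\mathrm{Var}=O(\delta^6)$ (a case distinction between small and large $d$ absorbs a borderline logarithm in the time integral). For (ii) one pulls $\sup|g^{(\theta,y,\delta)}|$ out of the inner products; by the Hölder regularity of $\theta$, the structure of $g^{(\theta,y,\delta)}$ (which already absorbs the linear term cancelled by the reproducing weights) and $|y-x|\le h$ with $\delta\ll h$, this supremum is $O(h^\beta)$, leaving $O(h^{2\beta})$ times a variance of order $\delta^4$ — the power being $4$ rather than $6$ because, once $\nabla K$ is multiplied by $g^{(\theta,y,\delta)}$, it no longer admits a clean Laplacian extraction. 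For (iii) one writes $\varphi_\theta(y+\delta\,\cdot)=\varphi_\theta(x)+\bigl(\varphi_\theta(y+\delta\,\cdot)-\varphi_\theta(x)\bigr)$: the constant part reduces to (i) with $(u,w)=(\partial_iK,K)$ and is $O(\delta^6)=O(\delta^2h^{2\beta})$, while the remainder has $\sup$-norm $O(h^{\beta-1})$ and, thanks to $\int K=0$, mean of order $\delta^{\beta-1}$, which, fed through the same machinery, supplies the extra $h^{2\beta-2}$ together with a further $\delta^{2}$.

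The main obstacle is pinning down the exact power of $\delta$: one must balance the gain of $\delta^2$ per extracted Laplacian against the dimension-dependent smoothing rate $r^{-d/4}$ and the time-decay cost $r^{-1/2}$ of each derivative handled via Lemma~\ref{boundS*u}, treating separately the small-time regime $r\lesssim\delta^2$ (where only $\norm{S^\ast_\theta(r)a_{\delta,y}}_{L^2}\lesssim1$ is available) and the large-time regime, all while keeping every constant uniform over the growing domains $\Lambda_{\delta,y}$. In (ii) and (iii) the subtler point is verifying that the Hölder remainder genuinely scales like $h^{2\beta}$ rather than merely $h^2$, which forces use of the reproducing structure encoded in $g^{(\theta,y,\delta)}$ in place of a crude Lipschitz bound.
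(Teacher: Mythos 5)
Your overall skeleton for part (i) — Isserlis/Wick reduction of the variance to a double time integral of products of covariances, rescaling via Lemma \ref{rescaledsemigroup}, and extraction of the $\delta$-powers from the $K=-\Delta\bar K$ structure through the smoothing bounds of Lemma \ref{boundS*u} — is the same as the paper's, up to a different (and somewhat vaguer) bookkeeping of where the three factors $\delta^2$ come from; that part is fine in spirit.

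The genuine gap is in part (ii). You obtain the factor $h^{2\beta}$ by claiming $\sup|g^{(\theta,y,\delta)}|=O(h^\beta)$, justified by ``the structure of $g^{(\theta,y,\delta)}$ (which already absorbs the linear term cancelled by the reproducing weights)''. This is false: by \eqref{eq: functiongtotal}--\eqref{eq: functiong}, $g^{(\theta,y,\delta)}(z)=\theta(y+\delta z)-\theta(x)$ still contains the full linear Taylor term $\sum_{|\alpha|=1}D^\alpha\theta_i(x)(y+\delta z-x)^\alpha$, which for $|y-x|\leq h$ is genuinely of size $\asymp h$, not $h^\beta$ (take $\theta$ affine with nonzero slope). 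The cancellation of this linear term by the reproducing property of the weights happens only after summing over $k$ with $w_k(x)$ (Corollary \ref{cor: connweights}), i.e.\ in the bias analysis; no weights appear in Lemma \ref{ConvouterVar}, whose supremum is over a single location $y$, so that cancellation is simply not available here. With the correct bound $\sup|g^{(\theta,y,\delta)}|=O(h)$ your argument only delivers $O(\delta^4h^{2})$, which is strictly weaker than the claimed $O(\delta^4h^{2\beta})$ (recall $\beta>1$, $h\leq1$) and is not enough downstream: fed into Proposition \ref{prop: varrest} it gives $\operatorname{Var}(\mathcal{R}^x_\delta)=O(h^2)$, hence a remainder of stochastic order $h$ rather than $h^\beta$, which destroys the rate of Theorem \ref{thm: maintheorem}.

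The paper closes exactly this gap by a different mechanism, with no cancellation at all: it keeps the bound $\sup_{|y-x|\leq h}\|S^*_{\theta,\delta,y}(t)g^{(\theta,y,\delta)}\cdot\nabla K\|_{L^2(\Lambda_{\delta,y})}\lesssim h(1\wedge t^{-d/4+\varepsilon})$ (cf.\ the refined version \eqref{eq: sharpboundg}), which yields $V_1\lesssim h^2(1\vee\delta^{d-2-4\varepsilon})$ after the time integration, and then trades the surplus powers of $\delta$ for powers of $h$ via the standing relations $\delta\ll h$, $\delta^2\ll h^\beta$, $\delta^{d/2}\ll h^\beta$ of \eqref{eq: order h}, giving $V_1\lesssim h^{2\beta}\delta^{-2}$ and hence the stated $O(\delta^4h^{2\beta})$. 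The same remark applies to your part (iii): the splitting at $\varphi_\theta(x)$ and the claim of a ``mean of order $\delta^{\beta-1}$ thanks to $\int K=0$'' are both unsubstantiated and unnecessary — boundedness of $\varphi_\theta$ plus the same $\delta$-for-$h$ trade through \eqref{eq: order h} already gives $O(\delta^2h^{2\beta})$. So the subtle point you correctly identified (why $h^{2\beta}$ and not $h^2$) is resolved in the paper by the coupling of $h$ and $\delta$, not by any reproducing structure inside $g^{(\theta,y,\delta)}$, and your proposed resolution does not work as stated.
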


\subsection{Proof of the upper bound}
\label{sec: detailederror}
Before proving Theorem \ref{thm: maintheorem}, we carefully inspect the observed Fisher information $\mathcal{I}_\delta^x$ and the remainder $\mathcal{R}_\delta^x$ appearing in the decomposition \eqref{eq: est2}.

\subsubsection{The Fisher information and the martingale part}
\label{sec: fisherinfo}
\begin{proposition}
	\label{prop: fisher}
	Grant Assumption \ref{ass: total}. Then, 
	\[
	\mathcal{I}^x_\delta\stackrel{\P}{\rightarrow}\Sigma, \quad \text{ where }\quad
	\Sigma_{ij}=\frac{T}{2a}\sc{(-\Delta)^{-1}\partial_iK}{\partial_jK}, \ i,j\in\{1,\ldots,d\},
	\] and $\Sigma$ thus defined is invertible.
	\begin{proof}
		We only consider the case where $X_0=0$. 
		Note that Assumption \ref{ass: total}(iv) implies the assumed structure in \cite{altmeyer_anisotrop2021}, cf.~\cite[Lemma 2.2]{altmeyer_anisotrop2021}. 
		We hence refer to \cite[Theorem 2.3]{altmeyer_anisotrop2021} for the invertibility of $\Sigma$ and the generalisation of the initial condition. 
		Thus, it suffices to show that, for $1\leq i,j\leq d$,
		\begin{equation}
			\label{eq: propfishersuff}
			\E[\mathcal{I}^x_\delta]_{ij}\rightarrow\Sigma_{ij},\quad \mathrm{Var}((\mathcal{I}^x_\delta)_{ij})\rightarrow0.
		\end{equation}
		Recall that, for $z,z'\in L^2(\R^d)$, the function $\psi(\cdot,\cdot)$ introduced in Lemma \ref{lem: ConvFisher} is defined as
		\[
		\psi(z,z')=\int_{0}^{\infty}\sc{\e^{ta\Delta} z}{\e^{ta\Delta}z'}\diff t=\frac{1}{2a}\sc{(-\Delta)^{-1}z}{z'}.
		\]
		In view of $\sum_{k=1}^Nw_k(x)=1$, $\sum_{k=1}^N|w_k(x)|\lesssim1$, the first part of \eqref{eq: propfishersuff} follows by
		\begin{align*}
			&{\sup_{x\in\mathcal{J}}}|\E[\mathcal{I}_{\delta}^x]_{ij}-\Sigma_{ij}|\\
			&\leq {\sup_{x\in\mathcal{J}}}\sum_{k=1}^{N}|w_k(x)|\left|\delta^{-2}\int_{0}^{T}\mathrm{Cov}(\sc{X(t)}{(\partial_iK)_{\delta,x_k}},\sc{X(t)}{(\partial_jK)_{\delta,x_k}})\diff t-T\psi(\partial_iK,\partial_jK)\right|\\
			&{\leq C_*} \sup_{y\in \mathcal{J}}\left|\delta^{-2}\int_{0}^{T}\mathrm{Cov}(\sc{X(t)}{(\partial_iK)_{\delta,y}},\sc{X(t)}{(\partial_jK)_{\delta,y}})\diff t-T\psi(\partial_iK,\partial_jK)\right|\\
			&\rightarrow0,
		\end{align*}
		where the convergence statement in the last line is a consequence of Lemma \ref{lem: ConvFisher}(ii). 
		By the Cauchy--Schwarz inequality and Lemma \ref{ConvouterVar}(i), we obtain
		\begin{align*}
			{\sup_{x\in\mathcal{J}}}\operatorname{Var}((\mathcal{I}_\delta^x)_{ij})^{1/2}&\leq {\sup_{x\in\mathcal{J}}}\sum_{k=1}^{N}|w_k(x)|\delta^{-2}\mathrm{Var}\left(\int_{0}^{T}\sc{X(t)}{(\partial_iK)_{\delta,x_k}}\sc{X(t)}{(\partial_jK)_{\delta,x_k}}\diff t\right)^{1/2}\\
			&\leq {C_*}\sup_{y\in \mathcal{J}}\delta^{-2}\mathrm{Var}\left(\int_{0}^{T}\sc{X(t)}{(\partial_iK)_{\delta,y}}\sc{X(t)}{(\partial_jK)_{\delta,y}}\diff t\right)^{1/2}\rightarrow0,
		\end{align*}
		concluding the proof.
	\end{proof}
\end{proposition}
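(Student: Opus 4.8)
The plan is to prove the convergence entrywise by the classical $L^2$-argument: it suffices to show that, for each $i,j\in\{1,\dots,d\}$, one has $\E[(\mathcal{I}^x_\delta)_{ij}]\to\Sigma_{ij}$ and $\mathrm{Var}((\mathcal{I}^x_\delta)_{ij})\to0$, in fact uniformly in $x\in\mathcal J$, so that $(\mathcal{I}^x_\delta)_{ij}\to\Sigma_{ij}$ in $L^2(\Omega)$ and hence in probability; since the matrix is $d\times d$, this yields $\mathcal{I}^x_\delta\stackrel{\P}{\to}\Sigma$. All moments below are finite because $\langle X(t),\cdot\rangle$ is Gaussian. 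First I would reduce to $X_0=0$: the contribution of a general initial condition from Assumption \ref{ass: total}(iv) is asymptotically negligible by exactly the argument of \cite{altmeyer_anisotrop2021} (cf.\ their Lemma 2.2/Theorem 2.3), so without loss of generality the process is centred with covariance given by Lemma \ref{lem:covFun}(i).

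For the expectation, rewrite $X^\nabla_{\delta,k}(t)=\delta^{-1}\langle X(t),(\nabla K)_{\delta,x_k}\rangle$, so that $(\mathcal{I}^x_\delta)_{ij}=\sum_{k}w_k(x)\,\delta^{-2}\int_0^T\langle X(t),(\partial_iK)_{\delta,x_k}\rangle\langle X(t),(\partial_jK)_{\delta,x_k}\rangle\,\diff t$, and taking expectations turns the integrand into the covariance $\mathrm{Cov}(\langle X(t),(\partial_iK)_{\delta,x_k}\rangle,\langle X(t),(\partial_jK)_{\delta,x_k}\rangle)$. Using $\sum_k w_k(x)=1$ from Assumption \ref{ass: total}(iii)(4) to centre the target, and then $\sum_k|w_k(x)|\le C_*$ from Assumption \ref{ass: total}(iii)(2), I would bound $|\E[(\mathcal{I}^x_\delta)_{ij}]-\Sigma_{ij}|$ by $C_*\sup_{y\in\mathcal J}\big|\delta^{-2}\int_0^T\mathrm{Cov}(\langle X(t),(\partial_iK)_{\delta,y}\rangle,\langle X(t),(\partial_jK)_{\delta,y}\rangle)\,\diff t-T\psi(\partial_iK,\partial_jK)\big|$, which tends to $0$ by Lemma \ref{lem: ConvFisher}(ii), its part (i) guaranteeing that $\psi(\partial_iK,\partial_jK)$ is well-defined. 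Finally I identify $T\psi(\partial_iK,\partial_jK)=\tfrac{T}{2a}\langle(-\Delta)^{-1}\partial_iK,\partial_jK\rangle=\Sigma_{ij}$ using self-adjointness of $\e^{ta\Delta}$ and $\int_0^\infty\e^{2ta\Delta}\,\diff t=\tfrac1{2a}(-\Delta)^{-1}$.

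For the variance, the subtlety is that the summands across $k$ are \emph{not} independent (the processes $X_{\delta,k}$ do not inherit the independence of the driving noises), so I would not try to exploit independence; instead I apply Minkowski's inequality in $L^2(\Omega)$ to the centred sum, giving $\mathrm{Var}((\mathcal{I}^x_\delta)_{ij})^{1/2}\le\sum_k|w_k(x)|\,\delta^{-2}\mathrm{Var}\big(\int_0^T\langle X(t),(\partial_iK)_{\delta,x_k}\rangle\langle X(t),(\partial_jK)_{\delta,x_k}\rangle\,\diff t\big)^{1/2}$. By Lemma \ref{ConvouterVar}(i) each such variance is $O(\delta^6)$ uniformly in $y\in\mathcal J$, hence each term is $\delta^{-2}\cdot O(\delta^3)=O(\delta)$, and since $\sum_k|w_k(x)|\le C_*$ (and not $N$) the whole bound is $O(\delta)\to0$; this also makes the convergence uniform over $x\in\mathcal J$.

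It remains to verify that $\Sigma$ is invertible. For $v\in\R^d$, $v^\top\Sigma v=\tfrac{T}{2a}\langle(-\Delta)^{-1}(v\cdot\nabla K),v\cdot\nabla K\rangle\ge0$ because $(-\Delta)^{-1}$ is a positive operator, with equality only if $v\cdot\nabla K=0$, which forces $v=0$ since the partial derivatives $\partial_1K,\dots,\partial_dK$ are linearly independent (their mutual independence, which also underlies Assumption \ref{ass: total}(iii)(4)); alternatively one invokes \cite[Lemma 2.2 and Theorem 2.3]{altmeyer_anisotrop2021} directly. The only genuinely delicate point in this scheme is the variance step: it is crucial both that the sharp rate $O(\delta^6)$ of Lemma \ref{ConvouterVar}(i) — itself resting on the extra regularity of $K$ in Assumption \ref{ass: total}(ii) — beats the $\delta^{-2}$ blow-up with room to spare, and that the normalisation $\sum_k|w_k(x)|\le C_*$ absorbs the growing number $N$ of measurement locations. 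Everything else is bookkeeping and citation of the preparatory lemmas.
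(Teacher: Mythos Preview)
Your proposal is correct and follows essentially the same route as the paper: reduce to $X_0=0$, show $\E[(\mathcal{I}^x_\delta)_{ij}]\to\Sigma_{ij}$ via $\sum_k w_k(x)=1$, $\sum_k|w_k(x)|\le C_*$ and Lemma~\ref{lem: ConvFisher}(ii), and show $\mathrm{Var}((\mathcal{I}^x_\delta)_{ij})\to0$ via the triangle inequality for standard deviations together with Lemma~\ref{ConvouterVar}(i). Your brief direct argument for the invertibility of $\Sigma$ (positivity of $(-\Delta)^{-1}$ plus linear independence of the $\partial_iK$) is a harmless addition; the paper simply cites \cite{altmeyer_anisotrop2021} for this point.
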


\begin{proposition}
	\label{prop: quadvar}
	Grant Assumption \ref{ass: total}. Then,
	\begin{equation*}
		\label{eq: quadrvarord}
		[\mathcal{M}^x_\delta]_T\coloneqq \sum_{k=1}^{N}w_k(x)^2\int_{0}^{T}X^\nabla_{\delta,k}(t)X^\nabla_{\delta,k}(t)^\top \diff t=O_{\P}((Nh^d)^{-1}),
	\end{equation*}
 where the stochastic order of the right hand side is indepenent of $x\in\mathcal{J}.$
	\begin{proof}
		Again, it suffices to verify the claim with initial condition $X_0=0$. We show $\E[[\mathcal{M}^x_\delta]_T]=O((Nh^d)^{-1})$ and $\mathrm{Var}(([\mathcal{M}^x_\delta]_T)_{ij})=o((Nh^d)^{-2})$. Using that $\sum_{k=1}^Nw_k(x)^2\lesssim (Nh^d)^{-1}$, we get similarly to the proof of Proposition \ref{prop: fisher} that
		\begin{align*}
            {\sup_{x\in\mathcal{J}}}\E[[\mathcal{M}_\delta^x]_T]_{ij}&\leq{\sup_{x\in\mathcal{J}}}\sum_{k=1}^{N}w_k(x)^2\left|\delta^{-2}\int_{0}^{T}\mathrm{Cov}(\sc{X(t)}{(\partial_iK)_{\delta,x_k}},\sc{X(t)}{(\partial_jK)_{\delta,x_k}})\diff t\right|\lesssim (Nh^d)^{-1}
		\end{align*}
		as well as
		\begin{align*}
			{\sup_{x\in\mathcal{J}}}\mathrm{Var}(([\mathcal{M}^x_\delta]_T)_{ij})^{1/2}
			&\leq{\sup_{x\in\mathcal{J}}}\sum_{k=1}^{N}w_k(x)^2\sup_{y\in\mathcal{J}}\delta^{-2}\mathrm{Var}\left(\int_{0}^{T}\sc{X(t)}{(\partial_iK)_{\delta,y}}\sc{X(t)}{(\partial_jK)_{\delta,y}}\diff t\right)^{1/2}\\
            &\leq{C_*(Nh^d)^{-1}\sup_{y\in\mathcal{J}}\delta^{-2}\mathrm{Var}\left(\int_{0}^{T}\sc{X(t)}{(\partial_iK)_{\delta,y}}\sc{X(t)}{(\partial_jK)_{\delta,y}}\diff t\right)^{1/2}}\\
			&=o((Nh^d)^{-1}).
		\end{align*}
	\end{proof}
\end{proposition}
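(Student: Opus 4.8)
The plan is to follow the pattern of the proof of Proposition~\ref{prop: fisher} and split the argument into a first- and a second-moment bound. First I would reduce to $X_0=0$: by Assumption~\ref{ass: total}(iv) the initial-condition contribution $\langle S_\vartheta(t)X_0,\nabla K_{\delta,x_k}\rangle$ to each $X^\nabla_{\delta,k}$ is asymptotically negligible uniformly over the measurement locations, exactly as in Proposition~\ref{prop: fisher}, so it does not affect the stochastic order. Since $d$ is fixed, it then suffices to control each scalar entry $([\mathcal{M}^x_\delta]_T)_{ij}=\sum_{k=1}^N w_k(x)^2\int_0^T X^\nabla_{\delta,k}(t)_iX^\nabla_{\delta,k}(t)_j\diff t$, which I would do through its first two moments: establishing $\sup_{x\in\mathcal{J}}\E[([\mathcal{M}^x_\delta]_T)_{ij}]=O((Nh^d)^{-1})$ and $\sup_{x\in\mathcal{J}}\operatorname{Var}(([\mathcal{M}^x_\delta]_T)_{ij})=o((Nh^d)^{-2})$, and concluding the uniform stochastic order by Markov's inequality.

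For the expectation, $\E[X^\nabla_{\delta,k}(t)_iX^\nabla_{\delta,k}(t)_j]=\operatorname{Cov}(\langle X(t),(\partial_iK)_{\delta,x_k}\rangle,\langle X(t),(\partial_jK)_{\delta,x_k}\rangle)$, so Lemma~\ref{lem: ConvFisher}(ii) shows that its time integral is $O(\delta^2)$ uniformly over the base point in $\mathcal{J}$. Summing against the weights and using $\sum_{k=1}^N w_k(x)^2\le\max_k|w_k(x)|\sum_k|w_k(x)|\lesssim(Nh^d)^{-1}$, which follows from Assumption~\ref{ass: total}(iii)(1)--(2), would yield $\E[([\mathcal{M}^x_\delta]_T)_{ij}]\lesssim\delta^2(Nh^d)^{-1}\lesssim(Nh^d)^{-1}$, uniformly in $x$.

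For the variance I would deliberately avoid any independence argument — the support-disjointness of Assumption~\ref{ass: total}(i) decorrelates only the driving noises $W_k$, not the measurement processes $X_{\delta,k}$ themselves — and instead route everything through Minkowski's inequality in $L^2(\Omega)$: with $Y_k\coloneqq\int_0^T X^\nabla_{\delta,k}(t)_iX^\nabla_{\delta,k}(t)_j\diff t$ one has $\operatorname{Var}(\sum_k w_k(x)^2Y_k)^{1/2}\le\sum_k w_k(x)^2\operatorname{Var}(Y_k)^{1/2}$. Lemma~\ref{ConvouterVar}(i) gives $\operatorname{Var}(Y_k)^{1/2}=O(\delta^3)$ uniformly, whence $\operatorname{Var}(([\mathcal{M}^x_\delta]_T)_{ij})^{1/2}\lesssim(Nh^d)^{-1}\delta^3$ and therefore $\operatorname{Var}(([\mathcal{M}^x_\delta]_T)_{ij})=O((Nh^d)^{-2}\delta^6)=o((Nh^d)^{-2})$. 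Combining the two moment bounds via Markov's inequality, for any $\varepsilon>0$ I could choose $M$ large and $\delta$ small so that $\P(|([\mathcal{M}^x_\delta]_T)_{ij}|>M(Nh^d)^{-1})\le\varepsilon$ for every $x\in\mathcal{J}$, which is the stated uniform-in-$x$ conclusion.

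I do not expect a genuine obstacle here: this is a routine second-moment computation entirely parallel to Proposition~\ref{prop: fisher}. The single point requiring care is precisely the one noted above — not to mistake the disjoint supports for independence of the $X_{\delta,k}$ — so the variance estimate has to pass through the triangle inequality together with the pointwise bound of Lemma~\ref{ConvouterVar}(i) rather than through a sum-of-independent-variances argument. The spare powers of $\delta$ ($\delta^2$ in the mean, $\delta^6$ in the variance) are a windfall of the regime $\delta\ll h$ but play no role in reaching the claimed rate $(Nh^d)^{-1}$.
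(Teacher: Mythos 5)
Your proof follows the paper's argument essentially verbatim: reduction to $X_0=0$, entrywise first- and second-moment bounds combined with $\sum_{k}w_k(x)^2\lesssim (Nh^d)^{-1}$, Minkowski's inequality (rather than any independence of the $X_{\delta,k}$) for the variance, and Lemmas \ref{lem: ConvFisher}(ii) and \ref{ConvouterVar}(i) as the analytic inputs, so both the route and the conclusion coincide with the paper's proof.

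One bookkeeping slip should be corrected: since $\nabla (K_{\delta,x_k})=\delta^{-1}(\nabla K)_{\delta,x_k}$, the identity you state should read $\E[X^\nabla_{\delta,k}(t)_iX^\nabla_{\delta,k}(t)_j]=\delta^{-2}\operatorname{Cov}\big(\langle X(t),(\partial_iK)_{\delta,x_k}\rangle,\langle X(t),(\partial_jK)_{\delta,x_k}\rangle\big)$, and likewise $\operatorname{Var}(Y_k)=\delta^{-4}\operatorname{Var}\big(\int_0^T\langle X(t),(\partial_iK)_{\delta,x_k}\rangle\langle X(t),(\partial_jK)_{\delta,x_k}\rangle\diff t\big)$; Lemma \ref{ConvouterVar}(i) is stated for the localised test functions $(\partial_iK)_{\delta,y}$, not for $\nabla(K_{\delta,y})$ itself. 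The correct orders are therefore $\E[Y_k]=O(1)$ (by Lemma \ref{lem: ConvFisher}(ii), $\delta^{-2}\int_0^T\operatorname{Cov}\diff t\to T\psi(\partial_iK,\partial_jK)$ uniformly) and $\operatorname{Var}(Y_k)^{1/2}=O(\delta)$, not $O(\delta^2)$ and $O(\delta^3)$: there is no ``windfall'' of spare powers of $\delta$, since those powers are exactly consumed by the rescaling of the gradient. This does not damage the proof --- $O((Nh^d)^{-1})$ for the mean and $(Nh^d)^{-1}\,O(\delta)=o((Nh^d)^{-1})$ for the standard deviation are precisely the bounds the paper obtains and they suffice for the claim --- but the intermediate statements as written are too strong and the concluding remark about extra $\delta$-factors is mistaken.
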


\subsubsection{The remainder term}
\label{sec: restterm}
In this subsection, we will study the expected value and variance of the remainder term $\mathcal{R}_\delta^x$, given by 
\[
\mathcal{R}^x_\delta=\sum_{k=1}^{N}w_k(x)\int_{0}^{T}X^\nabla_{\delta,k}(t)\sc{X(t)}{((\theta-\theta(x)) \cdot\nabla+\varphi_\theta)K_{\delta,x_k}}\diff t.
\]
We start by exploring the connection between the weight functions $w_k(x)$ and the multivariate Taylor expansion. 
Define the difference
\begin{equation}
	\label{eq: functiongtotal}
	g^{(\theta,x_k,\delta)}(y)\coloneqq \theta(x_k+\delta y)-\theta(x).
\end{equation}
For $1\leq i\leq d$, its $i$-th entry is given by the first order multivariate Taylor expansion with Peano remainder $P_{1,i,x_k}(y)$
\begin{equation}
	\label{eq: functiong}
	\begin{split}
		g^{(\theta,x_k,\delta)}_i(y)&\coloneqq\sum_{|\alpha|=1}\frac{D^\alpha \theta_i(x)}{\alpha!}(x_k+\delta y-x)^\alpha+\sum_{|\alpha|=1}\frac{D^\alpha (\theta_i(\xi)-\theta_i(x))}{\alpha!}(x_k+\delta y-x)^\alpha\\
		&=\sum_{|\alpha|=1}\frac{D^\alpha \theta_i(x)}{\alpha!}(x_k+\delta y-x)^\alpha+P_{1,i,x_k}(y),
	\end{split}
\end{equation}
for some value $\xi\in B_{x_k+\delta y}(x)$. 
\begin{corollary}
	\label{cor: connweights}
	Grant Assumption \ref{ass: total}. Then, for any $0\leq s\leq T\delta^{-2}$,
	\begin{equation}
		\label{eq: connectionw}
		\sum_{k=1}^{N}w_k(x)\sc{\e^{2sa\Delta}\nabla K}{g^{(\theta,x_k,\delta)}\cdot \nabla K}_{L^2(\R^d)}\lesssim h^\beta(1\wedge s^{-3/2-d/4}).
	\end{equation}
	\begin{proof}
		By Assumption \ref{ass: total}(iii),
		\begin{align*}
			\sum_{k=1}^N w_k(x)g_i^{(\theta,x_k,\delta)}(y)=\sum_{k=1}^Nw_k(x)\left(\sum_{|\alpha|=1}\frac{D^\alpha \theta_i(x)}{\alpha!}(\delta y)^\alpha+P_{1,i,x_k}(y)\right).
		\end{align*}
		Thus,
		\begin{align*}
			&\sum_{k=1}^{N}w_k(x)\sc{\e^{2sa\Delta}\nabla K}{g^{(\theta,x_k,\delta)}\cdot \nabla K}_{L^2(\R^d)}\\
			&\hspace*{1em}=\sum_{k=1}^{N}w_k(x)\sc{\e^{2sa\Delta}\nabla K}{\sum_{i=1}^d\left(\sum_{|\alpha|=1}\frac{D^\alpha \theta_i(x)}{\alpha!}(\delta \cdot)^\alpha+P_{1,i,x_k}\right)\partial_iK}_{L^2(\R^d)}.
		\end{align*}
		By the symmetry of the heat kernel, $\e^{2as\Delta}\partial_j{K}(y)=(q_{2as}*\partial_jK)(y)$ is even if $\partial_jK$ is even and odd if $\partial_jK$ is odd, respectively, and Assumption \ref{ass: total} guarantees that one of these cases always holds true. 
		Moreover, the identity in $\R^d$ is an odd function which implies that $y_l\partial_iK(y)$ is odd if $\partial_iK$ is even and even if $\partial_iK$ is odd, respectively. Hence, for all $1\leq j,l,i\leq d$,
		\[
		\sc{\e^{2sa\Delta}\partial_j K(y)}{\delta y_l\partial_iK(y)}_{L^2(\R^d)}=0
		\]
		as an integral over an odd function. Note that $\norm{P_{1,i,x_k}\partial_iK}_{L^2(\R^d)}\lesssim h^\beta$ whenever $|x_k-x|\leq h$ due to $\delta\ll h$, the Hölder assumption on $\theta$ and $\partial_iK$ having compact support. 
		Indeed, 
		\begin{align*}
			\norm{P_{1,i,x_k}\partial_iK}_{L^2(\R^d)}&\lesssim\norm{|x_k+\delta y-x|^\beta\partial_iK(y)}_{L^2(\R^d)}\\
			&\lesssim |x_k-x|^\beta \norm{\partial_iK}_{L^2(\R^d)}\lesssim h^\beta.
		\end{align*}
		\eqref{eq: connectionw} hence follows by the Cauchy--Schwarz inequality, $K=(-\Delta)\bar{K}$ and Lemma \ref{boundS*u}, since 
		\begin{align*}
			\sum_{k=1}^{N}w_k(x)\sum_{i=1}^d\sc{\e^{2sa\Delta}\nabla K}{g^{(\theta,x_k,\delta)}\cdot \nabla K}_{L^2(\R^d)}
			&=\sum_{k:|x_k-x|\leq h}w_k(x)\sum_{i=1}^d\sc{\e^{2sa\Delta}\nabla K}{g^{(\theta,x_k,\delta)}\cdot \nabla K}_{L^2(\R^d)}\\
			&=\sum_{k:|x_k-x|\leq h}w_k(x)\sc{\e^{2sa\Delta}\nabla K}{\sum_{i=1}^dP_{1,i,x_k}\partial_iK}_{L^2(\R^d)}\\
			&\lesssim h^\beta(1\wedge s^{-3/2-d/4}).
		\end{align*}
	\end{proof}
\end{corollary}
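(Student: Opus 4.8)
The plan is to combine the first-order Taylor expansion \eqref{eq: functiong} of $g^{(\theta,x_k,\delta)}$ about $x$ with the order-one reproducing property of the weights in Assumption \ref{ass: total}(iii). Since $w_k(x)=0$ unless $|x_k-x|\le h$, only such indices contribute, and for those the $i$-th component reads $g^{(\theta,x_k,\delta)}_i(y)=\nabla\theta_i(x)\cdot(x_k-x)+\delta\,\nabla\theta_i(x)\cdot y+P_{1,i,x_k}(y)$. Summing against $w_k(x)$ and invoking $\sum_k w_k(x)(x_k-x)^\alpha=0$ for $|\alpha|=1$ together with $\sum_k w_k(x)=1$, the first term cancels and the second collapses to $\delta\,\nabla\theta_i(x)\cdot y$, whence
\[
\sum_{k}w_k(x)\,g^{(\theta,x_k,\delta)}(y)\cdot\nabla K(y)=\delta\sum_{i,l}\partial_l\theta_i(x)\,y_l\,\partial_i K(y)+\sum_{k}w_k(x)\sum_i P_{1,i,x_k}(y)\,\partial_i K(y).
\]

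First I would dispose of the linear-in-$y$ term by a parity argument. Assumption \ref{ass: total}(ii) forces $K=(-\Delta)\bar K$ with $\bar K$ even or odd, and since $-\Delta$ preserves parity, \emph{all} first partials $\partial_m K$ share one and the same parity; convolution with the even heat kernel $q_{2as}$ preserves it, so $\e^{2sa\Delta}\partial_j K$ has the same parity as $\partial_i K$, whereas $y_l\,\partial_i K$ has the opposite one. Consequently $\sc{\e^{2sa\Delta}\partial_j K}{y_l\,\partial_i K}_{L^2(\R^d)}=0$ as an integral of an odd function over $\R^d$, for every $i,j,l$, and only the remainder term survives.

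It then remains to bound, for each component $j$, the quantity $\sum_k w_k(x)\sum_i\sc{\e^{2sa\Delta}\partial_j K}{P_{1,i,x_k}\partial_i K}_{L^2(\R^d)}$. For $|x_k-x|\le h$, the Hölder assumption $\theta_i\in\mathcal H(\beta)$ yields the Peano bound $|P_{1,i,x_k}(y)|\lesssim |x_k+\delta y-x|^\beta$, and since $\partial_i K$ has compact support and $\delta\ll h$ (so that $|x_k+\delta y-x|\lesssim h$ on that support), one gets $\norm{P_{1,i,x_k}\partial_i K}_{L^2(\R^d)}\lesssim h^\beta$, uniformly in $k$ and in $x\in\mathcal J$. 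By Cauchy--Schwarz the matter reduces to estimating $\norm{\e^{2sa\Delta}\partial_j K}_{L^2(\R^d)}$; writing $\partial_j K=-\partial_j\Delta\bar K$ and distributing the smoothing of the heat semigroup among three factors of $\e^{(2sa/3)\Delta}$ — one mapping $\bar K\in L^1\cap L^2$ into $L^2$ at cost $s^{-d/4}$ (the Young-type bound $\norm{\e^{ta\Delta}z}_{L^2(\R^d)}\lesssim(1\wedge t^{-d/4})(\norm{z}_{L^1(\R^d)}+\norm{z}_{L^2(\R^d)})$ noted above), one absorbing $\Delta$ at cost $s^{-1}$ and one absorbing $\partial_j$ at cost $s^{-1/2}$, with $L^2$-contractivity of $\e^{2sa\Delta}$ handling bounded $s$ — produces $\norm{\e^{2sa\Delta}\partial_j K}_{L^2(\R^d)}\lesssim 1\wedge s^{-3/2-d/4}$; this is exactly what Lemma \ref{boundS*u} with $\delta=0$ delivers, applied to the order-three operator $-\partial_j\Delta$ and $z=\bar K$. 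Since $\sum_k|w_k(x)|\le C_*$ and the sum over the $d$ values of $i$ costs only a constant, collecting the bounds gives $\lesssim h^\beta(1\wedge s^{-3/2-d/4})$.

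The main obstacle is getting the decay exponent $3/2+d/4$ exactly right: it is not dictated by the bandwidth $h$ and the smoothness $\beta$ alone but requires the structural assumption $K=(-\Delta)\bar K$ to buy two of the three half-powers of $s^{-1/2}$, the compact support of $\bar K$ to buy the dimension-dependent factor $s^{-d/4}$, and the derivative already present in $\nabla K$ to buy the last $s^{-1/2}$. The second delicate point is the parity bookkeeping — in particular the observation that all first partials of $K$ have a common parity — which is what forces the linear Taylor term to vanish for every pair $(i,j)$ rather than merely "on average".
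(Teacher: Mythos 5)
Your argument is correct and follows essentially the same route as the paper's proof: first-order Taylor expansion combined with the order-one reproducing property of the weights, the common-parity/antisymmetry argument (via $K=(-\Delta)\bar K$) to annihilate the linear term $\delta\,y_l\partial_iK$, the Peano-remainder bound $\norm{P_{1,i,x_k}\partial_iK}_{L^2(\R^d)}\lesssim h^\beta$ using $\delta\ll h$ and compact support, and Cauchy--Schwarz plus heat-semigroup smoothing for the factor $1\wedge s^{-3/2-d/4}$. The only cosmetic difference is that you make the decay estimate $\norm{\e^{2sa\Delta}\nabla K}_{L^2(\R^d)}\lesssim 1\wedge s^{-3/2-d/4}$ explicit by splitting the semigroup into three factors and using Young's inequality, whereas the paper simply invokes Lemma \ref{boundS*u}; both are fine.
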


\begin{proposition}\label{prop: expectrest}
	Grant Assumption \ref{ass: total}, and assume that $X_0=0$. 
	Then,
	\[
	{\sup_{x\in\mathcal{J}}}\E[\mathcal{R}_\delta^x]=O(h^\beta).
	\]
	\begin{proof}
		Using the covariance structure in Lemma \ref{lem:covFun} and the rescaling Lemma \ref{rescaledsemigroup}, we obtain
		\begin{align}\nonumber
			\E[\mathcal{R}_\delta^x]&=\sum_{k=1}^{N}w_k(x)\int_{0}^{T}\E\left[X^\nabla_{\delta,k}(t)\sc{X(t)}{((\theta-\theta(x)) \cdot\nabla+\varphi_\theta)K_{\delta,x_k}}\right]\diff t\\\nonumber
			&=\sum_{k=1}^{N}w_k(x)\int_{0}^{T}\mathrm{Cov}(X^\nabla_{\delta,k},\sc{X(t)}{((\theta-\theta(x))\cdot\nabla+\varphi_\theta)K_{\delta,x_k}})\diff t\\\label{eq: expectvaldecomp}
			&=\int_{0}^{T}\int_{0}^{t\delta^{-2}}A(s)\diff s\diff t	+\delta\int_{0}^{T}\int_{0}^{t\delta^{-2}}B(s)\diff s\diff t,
		\end{align}
		with
		\begin{align*}
			A(s)&\coloneqq \sum_{k=1}^{N}w_k(x)\sc{S^\ast_{\theta,\delta,x_k}(s)\nabla K}{S^\ast_{\theta,\delta,x_k}(s)(\theta(x_k+\delta\cdot)-\theta(x))\cdot \nabla K}_{L^2(\Lambda_{\delta,x_k})},\\
			B(s)&\coloneqq \sum_{k=1}^{N}w_k(x)\sc{S^\ast_{\theta,\delta,x_k}(s)\nabla K}{S^\ast_{\theta,\delta,x_k}(s)\varphi_\theta(x_k+\delta\cdot)K}_{L^2(\Lambda_{\delta,x_k})}.		
		\end{align*}
		Noting that ${S}^\ast_{\theta,\delta,x_k}(s)u(x)=0$ for $x\notin\Lambda_{\delta,x_k}$ and using multivariate Taylor expansion for $\theta_i$, we can write  
		\begin{equation*}
			\label{eq: taylorrepres}
			A(s)=\sum_{k=1}^{N}w_k(x)\sc{{S}^*_{\theta,\delta,x_k}(s)\nabla K}{{S}^*_{\theta,\delta,x_k}(s)g^{(\theta,x_k,\delta)}\cdot \nabla K}_{L^2(\R^d)},
		\end{equation*}
		with $g^{(\theta,x_k,\delta)}$ given by \eqref{eq: functiongtotal}. 
		Corollary \ref{cor: connweights} already implies
		\[
		\sum_{k=1}^{N}w_k(x)\sc{\e^{2sa\Delta}\nabla K}{g^{(\theta,x_k,\delta)}\cdot \nabla K}_{L^2(\R^d)}\lesssim h^\beta(1\wedge s^{-3/2-d/4}).
		\]
		Hence,
		\begin{equation}
			\label{eq: error e}
			\int_0^T\int_0^{t\delta^{-2}}\sum_{k=1}^{N}w_k(x)\sc{\e^{2sa\Delta}\nabla K}{g^{(\theta,x_k,\delta)}\cdot \nabla K}_{L^2(\R^d)}\diff s\diff t\lesssim h^\beta.
		\end{equation}
		Thus, it remains to control the error terms resulting from the switch of semigroups. This is given in the next lemma. The proof relies on the $L^2$-distance of $\e^{sa\Delta}$ to $\bar{S}_{\delta,y}(s)$ (pointed out in Lemma \ref{FeynmanKac}(iii)), the $L^2$-distance of $\bar{S}_{\delta,y}(s)$ to $S^*_{\theta,\delta,y}(s)$ (which can be controlled via the variation of parameters formula) and a sufficiently sharp upper bound for $\norm{S^*_{\theta,\delta,y}(s)g^{(\theta,\delta,y)}\cdot\nabla K}_{L^2(\Lambda_{\delta,y})}$. 
		\begin{lemma}
			\label{lem: shiftsemigroup} It holds
			\begin{align*}
				&\int_0^T\int_0^{t\delta^{-2}}A(s)\diff s\diff t=\int_0^T\int_0^{t\delta^{-2}}\sum_{k=1}^{N}w_k(x)\sc{\e^{2sa\Delta}\nabla K}{g^{(\theta,x_k,\delta)}\cdot \nabla K}_{L^2(\R^d)}\diff s\diff t + o(h^\beta),
			\end{align*}
        {where the $o$-term is independent of $x\in\mathcal{J}.$}
		\end{lemma}
		Lemma \ref{lem: shiftsemigroup} combined with \eqref{eq: error e} already yields the desired rate $h^\beta$ for the leading order term $\int_0^T\int_0^{t\delta^{-2}}A(s)\diff s\diff t$ in \eqref{eq: expectvaldecomp}. 
		The lower order term $\delta\int_{0}^{T}\int_{0}^{t\delta^{-2}}B(s)\diff s\diff t$ is bounded	in the same manner. Expand the right-hand side of the scalar product by adding and subtracting $\varphi_\theta(x)$. 
		Following the same structure as above, i.e., switching from the semigroup on $L^2(\Lambda_{\delta,x_k})$ to the heat kernel on $L^2(\R^d)$, we similarly obtain
		\begin{align*}
			&\delta\int_{0}^{T}\int_{0}^{t\delta^{-2}}\sum_{k=1}^{N}w_k(x)\langle{S}^*_{\theta,\delta,x_k}(s)\nabla K,{S}^*_{\theta,\delta,x_k}(s)(\varphi_\theta(x_k+\delta\cdot)-\varphi_\theta(x))K\rangle_{L^2(\Lambda_{\delta,x_k})}\diff s\diff t=o(h^\beta).
		\end{align*}
		On the other hand, using  $\psi(\nabla {K},\varphi_\theta(x){K})=0$ (due to integration by parts), Lemma \ref{lem: ConvFisher}(iii), Lemma \ref{ConvouterVar}(i) and \eqref{eq: order h}, we derive
		\begin{align*}
			&\delta\int_{0}^{T}\int_{0}^{t\delta^{-2}}\sum_{k=1}^{N}w_k(x)\sc{{S}^*_{\theta,\delta,x_k}(s)\nabla {K}}{{S}^*_{\theta,\delta,x_k}(s)\varphi_\theta(x)K}_{L^2(\Lambda_{\delta,x_k})}\diff s\diff t\lesssim\delta^2=o(h^\beta).
		\end{align*}
	\end{proof}
\end{proposition}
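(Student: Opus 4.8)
The plan is to pass to the expectation, exploit that $X$ is a centred Gaussian process (since $X_0=0$) so that the products $X^\nabla_{\delta,k}(t)\sc{X(t)}{\,\cdot\,}$ become covariances, rescale to the blow-up domains $\Lambda_{\delta,x_k}$, and thereby split $\E[\mathcal R_\delta^x]$ into a leading contribution coming from the velocity increment $\theta-\theta(x)$ and a lower-order one coming from $\varphi_\theta=\nabla\cdot\theta-c$. Concretely, using Lemma \ref{lem:covFun}(i), the rescaling identity of Lemma \ref{rescaledsemigroup}(ii), the isometry $\sc{f_{\delta,y}}{g_{\delta,y}}_{L^2(\R^d)}=\sc{f}{g}_{L^2(\R^d)}$, the identities $\nabla K_{\delta,x_k}=\delta^{-1}(\nabla K)_{\delta,x_k}$ and $(\varphi_\theta K_{\delta,x_k})=(\varphi_\theta(x_k+\delta\,\cdot)K)_{\delta,x_k}$, and the substitution $\sigma=(t-s)\delta^{-2}$ (which contributes a factor $\delta^2$), I would arrive at
\[
\E[\mathcal R_\delta^x]=\int_0^T\!\!\int_0^{t\delta^{-2}}\!A(s)\,\diff s\,\diff t+\delta\int_0^T\!\!\int_0^{t\delta^{-2}}\!B(s)\,\diff s\,\diff t,
\]
where $A(s)=\sum_{k=1}^N w_k(x)\sc{S^\ast_{\theta,\delta,x_k}(s)\nabla K}{S^\ast_{\theta,\delta,x_k}(s)\,g^{(\theta,x_k,\delta)}\!\cdot\nabla K}_{L^2(\Lambda_{\delta,x_k})}$ with $g^{(\theta,x_k,\delta)}$ the increment \eqref{eq: functiongtotal}, and $B(s)=\sum_{k=1}^N w_k(x)\sc{S^\ast_{\theta,\delta,x_k}(s)\nabla K}{S^\ast_{\theta,\delta,x_k}(s)\,\varphi_\theta(x_k+\delta\,\cdot)K}_{L^2(\Lambda_{\delta,x_k})}$; the $\delta^{-1}$ from $\nabla K_{\delta,x_k}$ combines with the $\delta^2$ to leave prefactor $1$ in the first term and $\delta$ in the second.

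For the leading term I would first replace $S^\ast_{\theta,\delta,x_k}(s)$ by the free heat semigroup $\e^{sa\Delta}$ and use self-adjointness to rewrite the pairings as $\sc{\e^{2sa\Delta}\nabla K}{g^{(\theta,x_k,\delta)}\!\cdot\nabla K}_{L^2(\R^d)}$; Corollary \ref{cor: connweights} bounds their weighted sum by $h^\beta(1\wedge s^{-3/2-d/4})$ — the reproducing property \ref{ass: total}(iii)(4) together with the parity of $\nabla K$ granted by Assumption \ref{ass: total}(ii) kills the linear Taylor term, leaving the $L^2$-bounded Peano remainder of size $h^\beta$, while $\nabla K=-\nabla\Delta\bar K$ supplies three derivatives and an $L^1\!\to\!L^2$ gain. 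Since $3/2+d/4>1$, $\int_0^T\int_0^\infty h^\beta(1\wedge s^{-3/2-d/4})\,\diff s\,\diff t\lesssim h^\beta$, which disposes of the heat-kernel target. It then remains to control the semigroup-switching error $\int_0^T\int_0^{t\delta^{-2}}\big|A(s)-\sum_k w_k(x)\sc{\e^{2sa\Delta}\nabla K}{g^{(\theta,x_k,\delta)}\!\cdot\nabla K}_{L^2(\R^d)}\big|\,\diff s\,\diff t$; I would isolate this as a separate lemma and show it is $o(h^\beta)$ by telescoping the difference of inner products, estimating $\norm{S^\ast_{\theta,\delta,x_k}(s)f-\e^{sa\Delta}f}_{L^2}$ through $\norm{S^\ast_{\theta,\delta,x_k}(s)f-\bar S_{\delta,x_k}(s)f}_{L^2}$ (variation of parameters against the $\delta^2$-scaled drift and reaction, via Lemmas \ref{boundS*_01}--\ref{boundS*u}) plus $\norm{\bar S_{\delta,x_k}(s)f-\e^{sa\Delta}f}_{L^2}$ (Lemma \ref{FeynmanKac}(iii)), and pairing these against the smoothing bounds for $S^\ast_{\theta,\delta,x_k}(s)(g^{(\theta,x_k,\delta)}\!\cdot\nabla K)$ and $\e^{sa\Delta}\nabla K$, which supply enough $s$-decay to make the $s$-integral over $[0,t\delta^{-2}]$ finite and of strictly smaller order than $h^\beta$.

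For the lower-order term I would write $\varphi_\theta(x_k+\delta\,\cdot)=\varphi_\theta(x)+\big(\varphi_\theta(x_k+\delta\,\cdot)-\varphi_\theta(x)\big)$. On $\operatorname{supp}K$ and for $|x_k-x|\le h$, the bound $\varphi_\theta\in\mathcal H(\beta-1)$ and $\delta\ll h$ give $\norm{(\varphi_\theta(x_k+\delta\,\cdot)-\varphi_\theta(x))K}_{L^2}\lesssim h^{\beta-1}$, so running the same switching argument together with $\sum_k|w_k(x)|\lesssim1$ produces a contribution $\lesssim\delta h^{\beta-1}=o(h^\beta)$. For the constant part, undoing the rescaling recovers $\varphi_\theta(x)\delta^{-1}\sum_k w_k(x)\int_0^T\operatorname{Cov}(\sc{X(t)}{(\nabla K)_{\delta,x_k}},\sc{X(t)}{K_{\delta,x_k}})\,\diff t$; since $\psi(\partial_iK,K)=\tfrac1{2a}\sc{(-\Delta)^{-1}\partial_iK}{K}=0$ by integration by parts, Lemma \ref{lem: ConvFisher}(iii) upgrades each covariance integral from $O(\delta^2)$ to $O(\delta^3)$, whence this part is $\lesssim\delta^2=o(h^\beta)$ by \eqref{eq: order h}. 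Summing the three contributions yields $\sup_{x\in\mathcal J}\E[\mathcal R_\delta^x]=O(h^\beta)$; the supremum is harmless because the constant $C_*$ of Assumption \ref{ass: total}(iii) is $x$-free and Corollary \ref{cor: connweights}, Lemma \ref{lem: ConvFisher} and the switching estimate are all uniform over the relevant locations $y\in\mathcal J$ with $|y-x|\le h$.

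I expect the main obstacle to be the semigroup-switching estimate: one must compare $S^\ast_{\theta,\delta,x_k}$ on the growing domain $\Lambda_{\delta,x_k}$ with the free heat semigroup while the inner time integral already runs up to $t\delta^{-2}\to\infty$, so the error has to combine the bounds of Lemma \ref{FeynmanKac}(iii) (small in $\delta$ but not in $s$) with the algebraic $s$-decay from smoothing $\e^{2sa\Delta}$ on $\nabla K=-\nabla\Delta\bar K$; making these two effects interact so that the result is $o(h^\beta)$ — and uniformly in $x\in\mathcal J$, rather than merely $O(h^\beta)$ — is the delicate point, whereas the leading-order bound is essentially immediate once Corollary \ref{cor: connweights} is in hand.
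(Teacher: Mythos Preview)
Your proposal is correct and follows essentially the same route as the paper: the decomposition into the $A$- and $B$-integrals via Lemma~\ref{lem:covFun} and Lemma~\ref{rescaledsemigroup}, the treatment of $A$ by Corollary~\ref{cor: connweights} plus a separate semigroup-switching lemma (telescoping through $\bar S_{\delta,x_k}$ with variation of parameters and Lemma~\ref{FeynmanKac}(iii)), and the splitting of $B$ into a constant part handled by $\psi(\partial_iK,K)=0$ with Lemma~\ref{lem: ConvFisher}(iii) and a Hölder-controlled increment part, all coincide with the paper's argument. The only cosmetic difference is that for the $\varphi_\theta$-increment you extract the bound $\delta h^{\beta-1}=o(h^\beta)$ directly from $\varphi_\theta\in\mathcal H(\beta-1)$, whereas the paper phrases this as ``following the same structure'' of the switching argument; both yield $o(h^\beta)$.
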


\begin{proposition}
	\label{prop: varrest}
	Grant Assumption \ref{ass: total}, and suppose that $X_0=0$. Then,
	\[ {\sup_{x\in\mathcal{J}}}\mathrm{Var}(\mathcal{R}_\delta^x)=O(h^{2\beta}).\]
	\begin{proof}
		We will show that each entry of the covariance matrix of $\mathcal{R}_\delta^x$ satisfies the required order, which then directly implies the order for the entire covariance matrix.
		Note that $	\mathrm{Cov}(\mathcal{R}_{\delta}^x)_{ij}$ is given by
		\begin{align*}
			&\sum_{k=1}^{N}\sum_{l=1}^{N}w_k(x)w_l(x)\delta^{-2} \mathrm{Cov}\left(\int_{0}^{T}\sc{X(t)}{(\partial_iK)_{\delta,x_k}}\sc{X(t)}{(\varphi_\theta+(\theta-\theta(x)) \cdot\nabla)K_{\delta,x_k}}\diff t,\right.\\
			&\hspace*{12em}	\left.\int_{0}^{T}\sc{X(t)}{(\partial_jK)_{\delta,x_l}}\sc{X(t)}{(\varphi_\theta+(\theta-\theta(x)) \cdot\nabla)K_{\delta,x_l}}\diff t\right).
		\end{align*}
		The Cauchy--Schwarz inequality and $(a+b)^2\leq 2a^2+2b^2$ imply that, up to constants {independent of $x\in\mathcal{J}$}, this last quantity is upper bounded by
		\begin{align*}
			&\delta^{-2}\sup_{y\in \mathcal{J},|y-x|\leq h,k\leq d}\mathrm{Var}\left(\int_{0}^{T}\sc{X(t)}{(\partial_kK)_{\delta,y}}\sc{X(t)}{(\varphi_\theta(y+\delta\cdot)K)_{\delta,y}}\diff t\right)\\
			&\quad+ \delta^{-4}\sup_{y\in \mathcal{J},|y-x|\leq h,k\leq d}\mathrm{Var}\left(\int_{0}^{T}\sc{X(t)}{(\partial_kK)_{\delta,y}}\sc{X(t)}{g^{(\theta,y,\delta)} \cdot\nabla K)_{\delta,y}}\diff t\right),
		\end{align*}
		with $g^{(\theta,y,\delta)}$ from \eqref{eq: functiongtotal}. 
		The result follows then immediately by Lemma \ref{ConvouterVar}(ii) and (iii).
	\end{proof}
\end{proposition}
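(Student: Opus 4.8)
The plan is to reduce the matrix-valued claim to a scalar one and then quote the single-location fourth-moment estimates of Lemma~\ref{ConvouterVar}. Since an $O(h^{2\beta})$ bound for every entry of the covariance matrix $\mathrm{Cov}(\mathcal{R}_\delta^x)$ carries over to the matrix itself, it suffices to bound a fixed entry $\mathrm{Cov}(\mathcal{R}_\delta^x)_{ij}$, $1\le i,j\le d$, uniformly in $x\in\mathcal{J}$. Writing out $\mathcal{R}_\delta^x$ and using $X^\nabla_{\delta,k}(t)=\sc{X(t)}{\nabla K_{\delta,x_k}}$ with $\nabla K_{\delta,x_k}=\delta^{-1}(\nabla K)_{\delta,x_k}$, this entry becomes a double sum over $k,l\le N$ of $w_k(x)w_l(x)$ times $\delta^{-2}$ times the covariance of two time-integrals of the form $\int_0^T\sc{X(t)}{(\partial_iK)_{\delta,x_k}}\sc{X(t)}{(\varphi_\theta+(\theta-\theta(x))\cdot\nabla)K_{\delta,x_k}}\diff t$ (and its analogue at $j,l$). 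Note that the off-diagonal terms $k\ne l$ do \emph{not} vanish, because the localisations at $x_k$ and $x_l$ have disjoint supports but the Gaussian field $\sc{X(t)}{\cdot}$ still exhibits long-range spatial correlations.

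To handle the full double sum I would bound each covariance by the Cauchy--Schwarz inequality $|\mathrm{Cov}(U,V)|\le\mathrm{Var}(U)^{1/2}\mathrm{Var}(V)^{1/2}$, which decouples the $k$- and $l$-sums, and then split the integrand inside each resulting variance into its $\varphi_\theta K_{\delta,x_k}$-part and its $(\theta-\theta(x))\cdot\nabla K_{\delta,x_k}$-part by means of $\mathrm{Var}(A+B)\le 2\mathrm{Var}(A)+2\mathrm{Var}(B)$. After rescaling the integration variable via $u=x_k+\delta v$ one has $(\theta-\theta(x))\cdot\nabla K_{\delta,x_k}=\delta^{-1}(g^{(\theta,x_k,\delta)}\cdot\nabla K)_{\delta,x_k}$, so the convective part picks up an additional factor $\delta^{-2}$, whereas $\varphi_\theta K_{\delta,x_k}=(\varphi_\theta(x_k+\delta\cdot)K)_{\delta,x_k}$ carries no extra factor. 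Using $\sum_k|w_k(x)|\le C_*$ from Assumption~\ref{ass: total}(iii)(2) to collapse the decoupled sums, and $w_k(x)=0$ for $|x_k-x|>h$ from Assumption~\ref{ass: total}(iii)(3) to restrict the surviving centres, this yields, with constants independent of $x\in\mathcal{J}$,
\begin{align*}
\mathrm{Cov}(\mathcal{R}_\delta^x)_{ij}
&\lesssim \delta^{-2}\sup_{y\in\mathcal{J},\,|y-x|\le h,\,m\le d}\mathrm{Var}\!\left(\int_0^T\sc{X(t)}{(\partial_mK)_{\delta,y}}\sc{X(t)}{(\varphi_\theta(y+\delta\cdot)K)_{\delta,y}}\diff t\right)\\
&\quad+\delta^{-4}\sup_{y\in\mathcal{J},\,|y-x|\le h,\,m\le d}\mathrm{Var}\!\left(\int_0^T\sc{X(t)}{(\partial_mK)_{\delta,y}}\sc{X(t)}{(g^{(\theta,y,\delta)}\cdot\nabla K)_{\delta,y}}\diff t\right).
\end{align*}

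It then remains to invoke Lemma~\ref{ConvouterVar}: part~(iii) bounds the first supremum by $O(\delta^2h^{2\beta})$, making the first summand $O(h^{2\beta})$, and part~(ii) bounds the second supremum by $O(\delta^4h^{2\beta})$, making the second summand $O(h^{2\beta})$ as well; since all constants are uniform in $x\in\mathcal{J}$, this gives $\sup_{x\in\mathcal{J}}\mathrm{Var}(\mathcal{R}_\delta^x)=O(h^{2\beta})$. At the level of the proposition the argument is essentially bookkeeping, and the only point requiring care is matching the powers of $\delta$ generated by the gradient rescalings, so that the extra $\delta^{-4}$ on the convective term is compensated exactly by the $\delta^4$ in Lemma~\ref{ConvouterVar}(ii). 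The genuine difficulty is therefore not in this proof but in Lemma~\ref{ConvouterVar}(ii)--(iii) themselves: their proofs must control fourth-order moments of the Gaussian field $\sc{X(t)}{\cdot}$ uniformly over all localisation centres $y$, which is where the rescaled-semigroup bounds of Section~\ref{sec: rescaledsemigroup} and the integration-by-parts cancellation $\psi(\nabla K,\varphi_\theta(x)K)=0$ enter to produce the required gains in $\delta$ and $h$.
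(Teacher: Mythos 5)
Your proposal is correct and follows essentially the same route as the paper: reduce to a single covariance entry, decouple the double sum via Cauchy--Schwarz and $\mathrm{Var}(A+B)\le 2\mathrm{Var}(A)+2\mathrm{Var}(B)$, use the weight bounds to collapse the sums into suprema over $|y-x|\le h$, and conclude with Lemma \ref{ConvouterVar}(ii) and (iii), whose $\delta^4h^{2\beta}$ and $\delta^2h^{2\beta}$ bounds exactly cancel the $\delta^{-4}$ and $\delta^{-2}$ prefactors. Your extra bookkeeping of where the $\delta$-powers arise from the gradient rescaling matches the paper's implicit computation.
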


Proposition \ref{prop: expectrest} and \ref{prop: varrest} already imply that $\mathcal{R}_\delta^x$ is of stochastic order $O_{\P}(h^\beta)$ whenever $X_0=0$. 
Under Assumption \ref{ass: total}(iv), this can furthermore be extended to general initial conditions. 

\begin{proposition}\label{prop: splittingrest}
	Grant Assumption \ref{ass: total}. 
	Define $\bar{\mathcal{R}}_\delta^x$ analogous to $\mathcal{R}_\delta^x$, but with respect to $\bar{X}$ satisfying \eqref{eq: SPDE} with initial condition $\bar{X}(0)=0$. 
	Then,
	\begin{equation*}
		\mathcal{R}^x_\delta=\bar{\mathcal{R}}_\delta^x+o_{\P}(h^\beta),
	\end{equation*}
 {where the $o_{\P}$-term does not depend on $x\in\mathcal{J}.$}
\end{proposition}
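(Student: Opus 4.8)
The goal is to show that replacing the zero initial condition by the general $X_0$ from Assumption~\ref{ass: total}(iv) only changes the remainder by $o_{\P}(h^\beta)$. Write $X = \bar X + \tilde X$, where $\bar X$ is the solution with $\bar X(0)=0$ and $\tilde X(t) = S_\theta(t)X_0$ is the deterministic (given $X_0$) transient. By linearity of \eqref{eq: weaksol} in the solution, $\mathcal{R}_\delta^x - \bar{\mathcal{R}}_\delta^x$ splits into three bilinear cross terms involving $\langle\tilde X(t), \partial_i K_{\delta,x_k}\rangle$ and $\langle\tilde X(t), ((\theta-\theta(x))\cdot\nabla + \varphi_\theta)K_{\delta,x_k}\rangle$, plus the pure $\tilde X$–$\tilde X$ term. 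So the first step is to record this decomposition cleanly and reduce the claim to showing that each of these cross terms, weighted by $w_k(x)$ and summed over $k$, is $o_{\P}(h^\beta)$ uniformly in $x\in\mathcal J$.

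The main estimate I would establish is a pointwise-in-time decay bound for the transient: using Lemma~\ref{rescaledsemigroup} to rescale, $\langle \tilde X(t), z_{\delta,x_k}\rangle = \langle X_0, S_\theta^\ast(t) z_{\delta,x_k}\rangle = \delta^{?}\langle (X_0)_{\cdot}, S^\ast_{\theta,\delta,x_k}(t\delta^{-2}) z\rangle$ for $z \in \{\partial_i K, ((\theta-\theta(x))\cdot\nabla+\varphi_\theta)K\}$ (appropriately rescaled), and then invoking Lemma~\ref{boundS*u} (with the relevant $s$ and $p$, exploiting that $z = (-\Delta)\bar K$-type functions are regular and compactly supported) together with the exponential factor $\e^{c_1 t\delta^2}$ from Lemma~\ref{boundS*_01}. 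In the bounded-$L^p$ case one gets a bound of the form $\|\langle \tilde X(t), z_{\delta,x_k}\rangle\| \lesssim \delta^{m}(1\wedge (t\delta^{-2})^{-\kappa})$ for a suitable power $m$ and decay exponent $\kappa$; in the stationary case (with $c-\nabla\cdot\theta \le \gamma < 0$) one gets genuine exponential-in-time decay of the semigroup, which is even more favourable. Either way, integrating $\int_0^T$ against $X^\nabla_{\delta,k}(t)$ (whose second moment is controlled as in Section~\ref{sec: fisherinfo}, of order $\delta^2$ after rescaling) and using Cauchy--Schwarz, the weight bounds $\sum_k|w_k(x)|\le C_*$ and $\sup_k|w_k(x)|\le C_*(Nh^d)^{-1}$, plus the Hölder bound $\|P_{1,i,x_k}\partial_i K\|_{L^2}\lesssim h^\beta$ for the $(\theta-\theta(x))$-part exactly as in Corollary~\ref{cor: connweights}, one obtains that the cross terms are of order $\delta^{\text{(positive power)}}$ times either $h^\beta$ or a constant — hence $o_{\P}(h^\beta)$ by \eqref{eq: order h}, which guarantees $\delta^{d/2}\ll h^\beta$ and $\delta^2 \ll h^\beta$. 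The $\tilde X$–$\tilde X$ term is even smaller and handled by the same bounds squared.

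I expect the main obstacle to be bookkeeping the exact powers of $\delta$ that come out of the rescaling in Lemma~\ref{rescaledsemigroup} combined with the $L^p\to L^2$ smoothing exponent $s/2+\gamma$ in Lemma~\ref{boundS*u}, and checking that the resulting power of $\delta$ is strictly positive so that \eqref{eq: order h} can absorb everything into $o_{\P}(h^\beta)$ — in particular one must be careful that the time integral $\int_0^{T\delta^{-2}}(1\wedge s^{-\kappa})\diff s$ does not reintroduce a bad power of $\delta$, which forces $\kappa > 1$ and hence a sufficiently strong smoothing bound (this is why one needs $K = (-\Delta)\bar K$ with $\bar K\in H^4$, analogous to the role it plays in Corollary~\ref{cor: connweights}). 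The $X_0 \in L^p \cap \mathcal{D}(A_\theta)$, $p>2$, branch requires the Bessel-potential / $L^p$ machinery; the stationary branch is softer because of the exponential decay but requires the sign condition $c - \nabla\cdot\theta \le \gamma < 0$ to make the time integral over $[0,\infty)$ finite. Uniformity in $x\in\mathcal J$ follows throughout because every bound used (Lemmas~\ref{boundS*_01}--\ref{boundS*u}, the weight constants $C_*$) is already uniform in the relevant base point $y\in\mathcal J$.
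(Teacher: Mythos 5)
Your starting point coincides with the paper's: write $X=\bar X+S_\theta(\cdot)X_0$, split $\mathcal R_\delta^x-\bar{\mathcal R}_\delta^x$ into the two cross terms and the pure transient term, and treat the two branches of Assumption \ref{ass: total}(iv) separately. The gap is in how you estimate these extra terms. For the cross terms you propose a pointwise-in-time bound, $\E\big[\sc{\bar X(t)}{(\partial_iK)_{\delta,x_k}}^2\big]^{1/2}\lesssim\delta$, paired via Cauchy--Schwarz with the deterministic bound $\big|\sc{X_0}{S^*_\theta(t)(g^{(\theta,x_k,\delta)}\cdot\nabla K)_{\delta,x_k}}\big|\lesssim h\big(1\wedge(t\delta^{-2})^{-1-d/4+\varepsilon}\big)+(h^\beta+\delta)\big(1\wedge(t\delta^{-2})^{-d/4}\big)$, then the weight bounds. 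This does not close: the Peano part of $g^{(\theta,x_k,\delta)}\cdot\nabla K$ has size $h^\beta$ but only decay $(t\delta^{-2})^{-d/4}$ (it is merely Hölder, so it cannot be written as a derivative of a nice function and no stronger smoothing exponent in Lemma \ref{boundS*u} is available), and after the change of variables $t=\delta^2 s$ its time integral over $[0,T\delta^{-2}]$ contributes a factor $\delta^{-2+d/2}$; the resulting bound for the rescaled cross term is of order $\delta h+(h^\beta+\delta)\delta^{-1+d/2}$, which is $\gg h^\beta$ for $d=1$ and only $O(h^\beta)$, not $o(h^\beta)$, for $d=2$. The paper's proof avoids this by using that the cross terms are centred and computing their \emph{variance} through the covariance kernel of $\bar X$ (Itô isometry/Lemma \ref{lem:covFun}, Wick in the stationary case): in the temporal-correlation integral the strongly decaying factor $\norm{S^*_{\theta,\delta,y}(r)\partial_iK}\lesssim 1\wedge r^{-1-d/4+\varepsilon}$ supplies exactly the integrability your pointwise bound lacks, giving variance $o(\delta^4h^{2\beta})$ per location and hence $o_{\P}(h^\beta)$ after the $\delta^{-2}$ rescaling and the weight sums. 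This exploitation of the temporal decorrelation of $\bar X$ is the essential missing idea.

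Two further steps are glossed over. For the pure transient term in the deterministic branch, "the same bounds squared" give only $O(h)$ (from the linear Taylor part of $g^{(\theta,x_k,\delta)}$), because with $X_0\in L^2$ alone your power $\delta^m$ is $m=0$: the $L^2$-normalisation of $K_{\delta,x_k}$ produces no gain. The paper extracts the needed factor by using $X_0\in\mathcal D(A_\theta)$, writing $\sc{S_\theta(t)X_0}{(\partial_iK)_{\delta,x_k}}=\sc{(-A_\theta)X_0}{S^*_\theta(t)(-A^*_\theta)^{-1}(\partial_iK)_{\delta,x_k}}$ and rescaling $(-A^*_\theta)^{-1}(\partial_iK)_{\delta,x_k}=\delta^2\big((-A^*_{\theta,\delta,x_k})^{-1}\partial_iK\big)_{\delta,x_k}$, so that the whole term becomes $\delta^2\int_0^{T\delta^{-2}}(\cdots)\diff t=o(h^\beta)$; this trick (trading decay for a $\delta^2$) is absent from your plan. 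Finally, in the stationary branch the exponential decay acts only at rate $|\gamma|\delta^2$, so it does not make time integrals over $[0,\infty)$ harmless; for $d\le2$ the paper must convert it into polynomial decay at the cost of a negative power of $\delta$ and verify that \eqref{eq: order h} still absorbs everything. Without these ingredients the proposal does not reach the claimed $o_{\P}(h^\beta)$, uniformly in $x$ or otherwise.
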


\subsubsection{Proof of the upper bound statement}

\begin{proof}[Proof of Theorem \ref{thm: maintheorem}]
	We use the error decomposition \eqref{eq: est2}. 
	{To prove \eqref{eq: mainresult}, it} suffices to show for $\delta\to0$ that $\mathcal{I}^x_\delta\stackrel{\P}{\to}\Sigma$ for some invertible, deterministic matrix $\Sigma$, while $\mathcal{M}^x_\delta=O_{\P}((Nh^d)^{-1/2})$ and $\mathcal{R}^x_\delta=O_{\P}(h^\beta)$.
	Proposition \ref{prop: fisher} gives that $\mathcal{I}^x_\delta\stackrel{\P}{\to}\Sigma$ for some invertible $\Sigma$. 
	Define a sequence of martingales via
	\[
	\mathcal{M}^x_\delta(t)=\sum_{k=1}^{N}w_k(x)\int_{0}^{t}X^\nabla_{\delta,k}(s)\diff W_k(s).
	\]
	In particular, due to the independence of the Brownian motions $W_k$ guaranteed by Assumption \ref{ass: total}, the quadratic variation of $\mathcal{M}^x_\delta=\mathcal{M}^x_\delta(T)$ is given by
	\[
	[\mathcal{M}^x_\delta]_T=\sum_{k=1}^{N}w_k(x)^2\int_{0}^{T}X^\nabla_{\delta,k}(t)X^\nabla_{\delta,k}(t)^\top \diff t.
	\]
	A standard argument, cf.~\cite[Lemma 3.6 or Lemma 3.8]{whitt_martingale_FCLT_2007}, shows that $\mathcal{M}^x_\delta$ behaves like the squared root of its quadratic variation, i.e., using Proposition \ref{prop: quadvar}, $\mathcal{M}^x_\delta=O_{\P}((Nh^d)^{-1/2})$.
	Combining Proposition \ref{prop: expectrest}, Proposition \ref{prop: varrest} and Proposition \ref{prop: splittingrest} yields the rate $O_{\P}(h^\beta)$ for $\mathcal{R}_\delta^x$.

    To prove the supplement \eqref{eq: extraresult}, it is enough to show that 
    \begin{align}
        \P\left(\big|(\mathcal{I}_\delta^x)^{-1}\mathcal{R}_\delta^x\big|h^{-\beta}>M\right)\leq\frac{\varepsilon}{2},\label{eq: extra1}\\
        \P\left(\big|(\mathcal{I}_\delta^x)^{-1}\mathcal{M}_\delta^x\big|(Nh^d)^{1/2} >M\norm{K}^{-1}_{L^2(\R^d)}\right)\leq\frac{\varepsilon}{2}.\label{eq: extra2}
    \end{align}
    We only show the statement \eqref{eq: extra1}, as the arguments for \eqref{eq: extra2} are similar. 
    Now,
    \begin{align}
        \P\left(\big|(\mathcal{I}_\delta^x)^{-1}\mathcal{R}_\delta^x\big|h^{-\beta}>M\right)&\leq 
        \P\left(\big|((\mathcal{I}_\delta^x)^{-1}-\Sigma^{-1})\mathcal{R}_\delta^x\big|h^{-\beta}>M\right)
        +\P\left(\big|\Sigma^{-1}\mathcal{R}_\delta^x\big|h^{-\beta}>M\right)\nonumber\\
        &\leq \P\left(\norm{(\mathcal{I}_\delta^x)^{-1}-\Sigma^{-1}}\big|\mathcal{R}_\delta^x\big|h^{-\beta}>M\right)+P\left(\big|\mathcal{R}_\delta^x\big|h^{-\beta}>M\norm{\Sigma^{-1}}^{-1}\right)\label{eq: extra3}
    \end{align}
    with arbitrary matrix norm $\norm{\cdot}$ on $\R^{d\times d}$. Due to Proposition \ref{prop: expectrest}, Proposition \ref{prop: varrest}, Chebyshev's inequality, and for $\delta$ sufficiently small and $M$ sufficiently large, the term $\P\left(\big|\mathcal{R}_\delta^x\big|h^{-\beta}>M\norm{\Sigma^{-1}}^{-1}\right)$ is uniformly bounded in $x\in\mathcal{J}$ by $\varepsilon/4$. 
    On the other hand, 
    \begin{align*}
        P\left(\norm{(\mathcal{I}_\delta^x)^{-1}-\Sigma^{-1}}\left|\mathcal{R}_\delta^x\right|h^{-\beta}>M\right)\leq P\left(\left|\mathcal{R}_\delta^x\right|h^{-\beta}>M\right)+P\left(\norm{(\mathcal{I}_\delta^x)^{-1}-\Sigma^{-1}}>1\right).
    \end{align*}
    Again, $\delta$ and $M$ can be chosen such that $P\left(\left|\mathcal{R}_\delta^x\right|h^{-\beta}>M\right)\leq\varepsilon/8$. 
    Moreover, there exists a value $\eta>0$ with the property that $\norm{((\mathcal{I}_\delta^x)^{-1}-\Sigma^{-1})}\leq 1$ whenever $\norm{(\mathcal{I}_\delta^x-\Sigma)}\leq \eta$, due to the continuity of the function $y\mapsto y^{-1}$ and the fact that both $\Sigma$ and $\mathcal{I}_\delta^x$ are (a.s.) invertible. Hence, for sufficiently small $\delta$,
    \begin{align*}
        \P\left(\norm{(\mathcal{I}_\delta^x)^{-1}-\Sigma^{-1}}>1\right)\leq P\left(\norm{\mathcal{I}_\delta^x-\Sigma}> \eta\right)\leq\varepsilon/8
    \end{align*}
    due to Proposition \ref{prop: fisher}, thus showing the assertion.
\end{proof}

\subsection{Proof of the lower bound}
\label{sec:prooflower}
The proof of Theorem \ref{thm:lower:bound:M>1} relies on the general reduction scheme in \cite[Section 2.2]{tsybakov_introduction_2008} and the RKHS machinery described in detail in \cite[Section 6.3]{altmeyer_anisotrop2021}. 
In what follows, we will therefore summarise the key components until the nonparametric setup requires a different reasoning.

Let $\mathbb{P}_{\vartheta^0}$ and $\mathbb{P}_{\vartheta^1}$ be two Gaussian measures defined on a separable Hilbert space $\mathcal{H}$ with expectation zero and positive self-adjoint trace-class covariance operators $C_{\vartheta^0}$ and $C_{\vartheta^1}$, respectively. $\theta^0$ and $\theta^1$ belong to a set of functions $\Theta$.
By the spectral theorem, there exist (strictly) positive eigenvalues $(\sigma_j^2)_{j\geq 1}$ and an associated orthonormal system of eigenvectors $(u_j)_{j\geq 1}$ such that $C_{\vartheta^0}=\sum_{j\geq 1}\sigma_j^2(u_j\otimes u_j)$. 
The reproducing kernel Hilbert space (RKHS) associated to $\mathbb{P}_{\vartheta^0}$ is given by
\begin{align*}
	H_{\theta^0}=\{h\in \mathcal{H}:\|h\|_{H_{\vartheta^0}}<\infty\},\qquad \|h\|_{H_{\vartheta^0}}^2=\sum_{j\geq 1}\frac{\sc{u_j}{h}_{\mathcal{H}}^2}{\sigma_j^2}.
\end{align*}
Instead of \cite[Lemma 6.8]{altmeyer_anisotrop2021}, we rely on its nonparametric equivalent. 
The proof is identical and therefore skipped.
\begin{lemma}\label{lem:Gaussian:lower:bound}
	In the above Gaussian setting, suppose that $(u_j)_{j\geq 1}$ is an orthonormal basis of $\mathcal{H}$ and that 
	\begin{align}\label{eq:Gaussian:lower:bound:condition}
		\sum_{j\geq 1}\sigma_j^{-2}\|(C_{\vartheta^1}-C_{\vartheta^0})u_{j}\|_{H_{\vartheta^0}}^2
		\leq \frac12.
	\end{align}
	Then, the squared Hellinger distance satisfies the bound $H^2(\mathbb{P}_{\theta^0},\mathbb{P}_{\vartheta^1})\leq 1$. Therefore, for any $x\in\Lambda$ and a generic constant $c_1>0$,
	\begin{align*}
		\inf_{\hat\vartheta}\max_{\vartheta\in\{\vartheta^0,\vartheta^1\}}\mathbb{P}_\vartheta\left(|\hat\vartheta(x)-\vartheta(x)|\geq \frac{c_1N^{-\beta/(2\beta+d)}}{2}\right)\geq \frac{1}{4}\cdot\frac{2-\sqrt{3}}{4}\eqqcolon c_2.
	\end{align*}
\end{lemma}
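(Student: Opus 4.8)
The plan is to separate the assertion into two essentially independent parts: the Hellinger bound $H^2(\mathbb{P}_{\vartheta^0},\mathbb{P}_{\vartheta^1})\le1$, which is a pure statement about two centred Gaussian measures, and the passage from that bound to the testing inequality, which is the textbook two-point reduction. I would begin by rewriting hypothesis \eqref{eq:Gaussian:lower:bound:condition}. Setting $D\coloneqq C_{\vartheta^0}^{-1/2}(C_{\vartheta^1}-C_{\vartheta^0})C_{\vartheta^0}^{-1/2}$ and using $C_{\vartheta^0}^{-1/2}u_j=\sigma_j^{-1}u_j$ together with the series defining $\|\cdot\|_{H_{\vartheta^0}}$, one checks that the left-hand side of \eqref{eq:Gaussian:lower:bound:condition} equals $\sum_{j,k\ge1}\sc{u_k}{Du_j}_{\mathcal H}^2=\|D\|_{\mathrm{HS}}^2$, so the hypothesis is exactly $\|D\|_{\mathrm{HS}}^2\le\tfrac12$. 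In particular $\|D\|_{\mathrm{op}}\le\|D\|_{\mathrm{HS}}\le1/\sqrt2<1$, hence $I+D$ is strictly positive (so $C_{\vartheta^1}=C_{\vartheta^0}^{1/2}(I+D)C_{\vartheta^0}^{1/2}$ is a genuine covariance) and every eigenvalue of $D$ lies in $[-1/\sqrt2,\infty)$.

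For the Hellinger bound I would use the Gaussian affinity formula: for centred Gaussians on $\mathbb{R}^n$ with covariances $\Sigma_0,\Sigma_1$, the Hellinger affinity equals $\det(I+E)^{1/4}\det(I+E/2)^{-1/2}=\prod_j g(\mu_j)$ with $E=\Sigma_0^{-1/2}\Sigma_1\Sigma_0^{-1/2}-I$, eigenvalues $\mu_j$, and $g(\mu)=(1+\mu)^{1/4}(1+\mu/2)^{-1/2}$. I would apply this to the marginals of $\mathbb{P}_{\vartheta^0},\mathbb{P}_{\vartheta^1}$ on $\mathrm{span}(u_1,\dots,u_n)$; since $(u_j)$ diagonalises $C_{\vartheta^0}$, the corresponding matrix $E^{(n)}$ is precisely the upper-left $n\times n$ block of $(\sc{u_k}{Du_j})_{j,k\ge1}$, so $\|E^{(n)}\|_{\mathrm{HS}}\le\|D\|_{\mathrm{HS}}$. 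With $f=\log g$, i.e. $f(\mu)=\tfrac14\log(1+\mu)-\tfrac12\log(1+\mu/2)$, the central elementary estimate is $|f(\mu)|\le\tfrac12\mu^2$ for $\mu\ge-1/\sqrt2$, obtained from $f(0)=f'(0)=0$ and a crude bound on $f'(\mu)=-\tfrac{\mu}{8(1+\mu)(1+\mu/2)}$ on that interval. Combining $1-\rho_n\le-\log\rho_n=\sum_j|f(\mu^{(n)}_j)|$ with this estimate yields $H^2(\text{$n$-dim marginals})\le\sum_j(\mu^{(n)}_j)^2=\|E^{(n)}\|_{\mathrm{HS}}^2\le\tfrac12$. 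Since $(u_j)$ is an orthonormal \emph{basis}, the coordinate $\sigma$-algebras increase to the full one and the Hellinger affinities of the marginals decrease to $\rho(\mathbb{P}_{\vartheta^0},\mathbb{P}_{\vartheta^1})$; letting $n\to\infty$ gives $H^2(\mathbb{P}_{\vartheta^0},\mathbb{P}_{\vartheta^1})\le\tfrac12\le1$.

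For the testing inequality I would run the usual two-point scheme, cf.\ \cite[Section 2.2]{tsybakov_introduction_2008}. Put $s=\tfrac{c_1}{2}N^{-\beta/(2\beta+d)}$; under the construction underlying the lemma the two points are $c_1N^{-\beta/(2\beta+d)}$-separated at $x$, i.e.\ $|\vartheta^0(x)-\vartheta^1(x)|\ge2s$. For any estimator $\hat\vartheta$ the test $\hat\psi=\mathbf{1}\{|\hat\vartheta(x)-\vartheta^1(x)|<|\hat\vartheta(x)-\vartheta^0(x)|\}$ satisfies, by the triangle inequality, $\{\hat\psi\ne j\}\subseteq\{|\hat\vartheta(x)-\vartheta^j(x)|\ge s\}$ under $\mathbb{P}_{\vartheta^j}$, whence
\[
\max_{j\in\{0,1\}}\mathbb{P}_{\vartheta^j}\big(|\hat\vartheta(x)-\vartheta^j(x)|\ge s\big)\ge\tfrac12\big(\mathbb{P}_{\vartheta^0}(\hat\psi=1)+\mathbb{P}_{\vartheta^1}(\hat\psi=0)\big)\ge\tfrac12\big(1-\|\mathbb{P}_{\vartheta^0}-\mathbb{P}_{\vartheta^1}\|_{\mathrm{TV}}\big).
\]
Finally $\|\mathbb{P}_{\vartheta^0}-\mathbb{P}_{\vartheta^1}\|_{\mathrm{TV}}\le\sqrt{H^2(1-H^2/4)}\le\sqrt3/2$, because $u\mapsto u(1-u/4)$ is increasing on $[0,1]$ and $H^2\le1$, so the displayed maximum is $\ge\tfrac12(1-\sqrt3/2)=\tfrac{2-\sqrt3}{4}\ge\tfrac14\cdot\tfrac{2-\sqrt3}{4}=c_2$; taking the infimum over $\hat\vartheta$ completes the argument.

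The one genuinely non-routine point is the Gaussian affinity step: I expect the main effort to go into (i) the scalar inequality $|f(\mu)|\le\tfrac12\mu^2$ on $[-1/\sqrt2,\infty)$ — here the confinement of the spectrum of $D$ to this interval, coming from $\|D\|_{\mathrm{HS}}\le1/\sqrt2$, is exactly what keeps $1+\mu$ bounded away from $0$ — and (ii) the careful justification that the Hellinger affinities of the finite-dimensional coordinate marginals converge to that of $\mathbb{P}_{\vartheta^0},\mathbb{P}_{\vartheta^1}$ on $\mathcal H$. Everything else is a direct determinant computation or a standard estimation-to-testing reduction.
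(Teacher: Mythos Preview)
Your argument is correct. The paper does not actually prove this lemma: it states that the proof is identical to that of \cite[Lemma~6.8]{altmeyer_anisotrop2021} and skips it. Your write-up therefore supplies what the paper omits, and the two ingredients you isolate --- the Gaussian Hellinger bound via the Hilbert--Schmidt condition on $D=C_{\vartheta^0}^{-1/2}(C_{\vartheta^1}-C_{\vartheta^0})C_{\vartheta^0}^{-1/2}$, and the standard two-point reduction from \cite[Section~2.2]{tsybakov_introduction_2008} --- are exactly the ones the cited reference uses. Your identification of \eqref{eq:Gaussian:lower:bound:condition} with $\|D\|_{\mathrm{HS}}^2\le\tfrac12$, the compression argument giving $\|E^{(n)}\|_{\mathrm{HS}}\le\|D\|_{\mathrm{HS}}$ (with the spectrum of $E^{(n)}$ confined to $[-1/\sqrt2,1/\sqrt2]$ by self-adjointness of $D$ and the min--max principle), and the scalar bound $|f(\mu)|\le\tfrac12\mu^2$ on $[-1/\sqrt2,\infty)$ are all sound.

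One minor observation: your final chain gives the testing lower bound $\tfrac{2-\sqrt3}{4}$ directly, which is four times the constant $c_2=\tfrac14\cdot\tfrac{2-\sqrt3}{4}$ stated in the lemma. That is not a gap in your proof --- the stated $c_2$ follows a fortiori --- but it is worth flagging that you actually obtain the sharper constant.
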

We assume without loss of generality that $\norm{K}_{L^2(\R^d)}=1$.
Choose $\theta^0$ such that the null model is $A_{\theta^0}=\Delta$, i.e., $\theta=0$, $c=0$, and choose $\theta^1$ such that the alternatives are  $A_{\theta^1}=\Delta+\theta\cdot\nabla+c$, where $c-\nabla\cdot\theta\leq\gamma<0$ and $\theta$ is componentwise $\beta$-Hölder continuous and a conservative vector field. 
For $\vartheta\in\{\vartheta^0,\vartheta^1\}$, let $\P_{\theta,\delta}$ be the law of $X_{\delta}$ on $\mathcal{H}=L^2([0,T])^M$, let  $C_{\vartheta,\delta}$ be its covariance operator, and let $(H_{\vartheta,\delta},\norm{\cdot}_{H_{\vartheta,\delta}})$ be the associated RKHS. 
For $(f_k)_{k=1}^M\in\mathcal{H}$, we have $C_{\theta,\delta}(f_k)_{k=1}^M=(\sum_{l=1}^MC_{\theta,\delta,k,l}f_l)_{k=1}^M$ with (cross-) covariance operators $C_{\theta,\delta,k,l}\colon L^2([0,T])\to L^2([0,T])$ defined by
\begin{align*}
	C_{\theta,\delta,k,l}f_l(t)=\E_{\theta} [\sc{X_{\delta,l}}{f_l}_{L^2([0,T])}X_{\delta,k}(t)],\qquad 0\leq t\leq T.
\end{align*}
Due to stationarity of $X_\delta$ (cf.~Assumption \ref{ass:lowerBound}), we have, for $0\leq t\leq T,$
\begin{align*}
	C_{\theta,\delta,k,l}f_l(t)
	&=\int_0^t c_{\theta,\delta,k,l}(t-t')f_l(t')\,\diff t'+\int_t^T c_{\theta,\delta,l,k}(t'-t)f_l(t')\,\diff t',
\end{align*}
with covariance kernels $c_{\theta,\delta,k,l}(t)= \E_{\theta} [X_{\delta,k}(t)X_{\delta,l}(0)]$, $0\leq t\leq T$. \\

\smallskip

Let $(\sigma_j^2)_{j\geq 1}$ be the strictly positive eigenvalues of $C_{\vartheta^0,\delta}$, and let $(u_j)_{j\geq 1}$ with $u_j=(u_{j,k})_{k=1}^M\in \mathcal{H}$ be a corresponding orthonormal system of eigenvectors. 
We want to verify the assumption in \eqref{eq:Gaussian:lower:bound:condition}, for which we require the following lemma.

\begin{lemma}[Lemma 6.9 in \cite{altmeyer_anisotrop2021}]\label{lem:seriesBound:M>1}
	In the above setting, we have
	\begin{align*}
		&\sum_{j=1}^\infty \sigma_j^{-2}\norm{(C_{\theta^0,\delta}-C_{\theta^1,\delta}) u_{j}}^2_{H_{\vartheta^0,\delta}}\\
		&\hspace*{1em}\leq CT  \sum_{k,l=1}^N\Big(\frac{\norm{\Delta K}^4_{L^2(\R^d)}}{\delta^{8}}\norm{c_{\theta^0,\delta,k,l}-c_{\theta^1,\delta,k,l}}^2_{L^2([0,T])}+\norm{c_{\theta^0,\delta,k,l}''-c_{\theta^1,\delta,k,l}''}^2_{L^2([0,T])}\Big)
	\end{align*}
	for all $\delta^2\leq \norm{\Delta K}_{L^2(\R^d)}$ and all $T\geq 1$, where $C>0$ is an absolute constant.
\end{lemma}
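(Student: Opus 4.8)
The plan is to recognise the series as the squared Hilbert--Schmidt norm of a single operator, to exploit a cancellation forced by the fact that $\P_{\vartheta^0}$ and $\P_{\vartheta^1}$ correspond to observation processes with the \emph{same} diffusion coefficient, and to combine this with a Sobolev-type control of the null RKHS norm. First I would reduce to a Hilbert--Schmidt norm. Since $(u_j)_{j\geq1}$ is an orthonormal basis of $\mathcal H=L^2([0,T])^N$, $C_{\vartheta^0,\delta}^{-1/2}u_j=\sigma_j^{-1}u_j$, and $\norm{h}_{H_{\vartheta^0,\delta}}^2=\norm{C_{\vartheta^0,\delta}^{-1/2}h}_{\mathcal H}^2$, writing $D\coloneqq C_{\vartheta^0,\delta}-C_{\vartheta^1,\delta}$ (self-adjoint, as a difference of covariance operators) one gets
\[
\sum_{j\geq1}\sigma_j^{-2}\norm{Du_j}_{H_{\vartheta^0,\delta}}^2=\sum_{j\geq1}\norm{C_{\vartheta^0,\delta}^{-1/2}DC_{\vartheta^0,\delta}^{-1/2}u_j}_{\mathcal H}^2=\norm{C_{\vartheta^0,\delta}^{-1/2}DC_{\vartheta^0,\delta}^{-1/2}}_{\mathrm{HS}}^2 .
\]

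The first, structural, point is that under both $\vartheta^0$ and $\vartheta^1$ the scalar observation $X_{\delta,k}$ is an Itô process with the same diffusion coefficient $\norm{K}_{L^2(\R^d)}$. Using the stationary covariance formula of Lemma~\ref{lem:covFun}(ii), a short computation of the one-sided derivative of the diagonal kernels at $0$ gives $(c_{\vartheta,\delta,k,k})'(0^+)=\tfrac12\big[\norm{S^\ast_{\vartheta}(s)K_{\delta,x_k}}^2\big]_{s=0}^{s=\infty}=-\tfrac12\norm{K}_{L^2(\R^d)}^2$, which is independent of $\vartheta$; for $k\neq l$ the disjoint-support condition of Assumption~\ref{ass:lowerBound}(iii) gives $(c_{\vartheta,\delta,k,l}+c_{\vartheta,\delta,l,k})'(0^+)=-\sc{K_{\delta,x_k}}{K_{\delta,x_l}}=0$. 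Hence the kernel differences $e_{k,l}\coloneqq c_{\vartheta^0,\delta,k,l}-c_{\vartheta^1,\delta,k,l}$ are $C^1$ across the diagonal $s=t$ of the matrix kernel $D_{k,l}(s,t)$ of $D$, so that $\partial_s^2 D_{k,l}$ carries \emph{no} singular (Dirac) part and is a genuine $L^2([0,T]^2)$ function with $\norm{\partial_s^2 D_{k,l}}_{L^2([0,T]^2)}^2\lesssim T\big(\norm{e_{k,l}''}_{L^2([0,T])}^2+\norm{e_{l,k}''}_{L^2([0,T])}^2\big)$, and likewise $\norm{D_{k,l}}_{L^2([0,T]^2)}^2\lesssim T\big(\norm{e_{k,l}}_{L^2}^2+\norm{e_{l,k}}_{L^2}^2\big)$, by stationarity.

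The second ingredient is the quadratic-form bound for the null RKHS norm,
\begin{equation}\label{eq:nullRKHSbound}
\sc{C_{\vartheta^0,\delta}^{-1}h}{h}_{\mathcal H}\lesssim\frac{\norm{\Delta K}_{L^2(\R^d)}^2}{\delta^4}\sum_{k=1}^N\norm{h_k}_{L^2([0,T])}^2+\sum_{k=1}^N\norm{h_k'}_{L^2([0,T])}^2,\qquad h_k\in H^1([0,T]),
\end{equation}
equivalently $C_{\vartheta^0,\delta}^{-1}\preceq\mathcal L\coloneqq\bigoplus_{k=1}^N\big(c_0\delta^{-4}\norm{\Delta K}^2 I+c_0 A_{\mathrm{der}}\big)$ in the form sense, where $A_{\mathrm{der}}=-\partial_s^2$ under Neumann boundary conditions so that $\sc{A_{\mathrm{der}}g}{g}=\norm{g'}^2$. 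To obtain it I would use that for the null model $A_{\vartheta^0}=\Delta$, whence $S^\ast_{\vartheta^0}(t)=\e^{t\Delta}$, and that by the rescaling Lemma~\ref{rescaledsemigroup}, the $L^2$-isometry of the spatial rescaling and Lemma~\ref{FeynmanKac}, the stationary diagonal covariance equals, up to a negligible error, $c_{\vartheta^0,\delta,k,k}(t)=\delta^2\Phi(|t|\delta^{-2})+o(\delta^2)$ for a fixed, rapidly decaying profile $\Phi$ (integrable by Lemma~\ref{lem: ConvFisher}(i)); each diagonal block of $C_{\vartheta^0,\delta}$ is thus, after rescaling, comparable to an Ornstein--Uhlenbeck covariance of mean-reversion scale $\asymp\delta^{-2}$ and unit diffusion, whose inverse obeys the classical second-order estimate once the Robin boundary terms are absorbed via the trace embedding $H^1([0,T])\hookrightarrow C([0,T])$ (uniform for $T\geq1$), while the off-diagonal blocks are dominated by Gaussian heat-kernel decay in $|x_k-x_l|\geq\delta$ and contribute only a lower-order perturbation. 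The hypothesis $\delta^2\leq\norm{\Delta K}_{L^2(\R^d)}$ enters precisely here, to guarantee that the coefficient $\delta^{-4}\norm{\Delta K}^2$ dominates the effective squared rate of the null process.

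Granting \eqref{eq:nullRKHSbound}, I would finish by applying it term by term at $g=Du_j$ (legitimate since $Du_j\in H^1([0,T])^N$) and rewriting the resulting sum as a trace,
\[
\norm{C_{\vartheta^0,\delta}^{-1/2}DC_{\vartheta^0,\delta}^{-1/2}}_{\mathrm{HS}}^2\leq\operatorname{tr}\!\big(C_{\vartheta^0,\delta}^{-1}D\mathcal L D\big)\leq\operatorname{tr}\!\big((\mathcal L D)^2\big)\leq\norm{\mathcal L D}_{\mathrm{HS}}^2,
\]
using $D\mathcal L D\succeq0$, $C_{\vartheta^0,\delta}^{-1}\preceq\mathcal L$, cyclicity of the trace and $|\operatorname{tr}(B^2)|\leq\norm{B}_{\mathrm{HS}}^2$ — the point being that $\mathcal L D$ \emph{is} Hilbert--Schmidt thanks to the cancellation from the second step. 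Since $\mathcal L$ is block diagonal, $\mathcal L D$ has matrix kernel $c_0\delta^{-4}\norm{\Delta K}^2 D_{k,l}(s,t)-c_0\partial_s^2 D_{k,l}(s,t)$, so that
\[
\norm{\mathcal L D}_{\mathrm{HS}}^2\lesssim\frac{\norm{\Delta K}^4}{\delta^8}\sum_{k,l=1}^N\norm{D_{k,l}}_{L^2([0,T]^2)}^2+\sum_{k,l=1}^N\norm{\partial_s^2 D_{k,l}}_{L^2([0,T]^2)}^2\lesssim T\sum_{k,l=1}^N\Big(\frac{\norm{\Delta K}^4}{\delta^8}\norm{e_{k,l}}_{L^2}^2+\norm{e_{k,l}''}_{L^2}^2\Big),
\]
which after relabelling is the claimed bound; note that, because of the cancellation, no $L^2$-norm of a first derivative and hence no interpolation step appears. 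The main obstacle is the second ingredient: because $X_\delta$ is not Markovian, $C_{\vartheta^0,\delta}^{-1}$ is not literally a differential operator, so proving \eqref{eq:nullRKHSbound} — the block-diagonal form lower bound on $C_{\vartheta^0,\delta}$ uniformly in $\delta$, with the stated $\delta$- and $K$-dependent constants — requires combining the explicit stationary representation through the rescaled heat semigroups of Section~\ref{sec: rescaledsemigroup} with a quantitative decoupling of the $N$ blocks from the disjoint supports, and then a comparison of each diagonal block from below against a genuine Ornstein--Uhlenbeck covariance; everything else (the Hilbert--Schmidt reduction, the $C^1$-across-the-diagonal cancellation, and the passage to the kernel norms) is then routine.
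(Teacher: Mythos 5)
You should first note that the paper does not prove this lemma at all: it is imported verbatim as Lemma 6.9 of \cite{altmeyer_anisotrop2021}, so your argument has to stand on its own. Two pieces of it do: the reduction of the series to $\norm{C_{\vartheta^0,\delta}^{-1/2}(C_{\vartheta^0,\delta}-C_{\vartheta^1,\delta})C_{\vartheta^0,\delta}^{-1/2}}_{\mathrm{HS}}^2$ is correct, and the observation that the one-sided derivatives of the stationary kernels at $0^+$ (equal to $-\tfrac12\norm{K}_{L^2(\R^d)}^2$ on the diagonal and $0$ off the diagonal by the disjoint supports) are the same under $\vartheta^0$ and $\vartheta^1$, so that the difference kernel carries no Dirac part, is a genuine and correct structural point. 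The gap is that the two steps you then lean on are exactly where the content of the lemma lies, and neither is established. Your quadratic-form bound $C_{\vartheta^0,\delta}^{-1}\preceq c_0\bigl(\delta^{-4}\norm{\Delta K}_{L^2(\R^d)}^2 I+A_{\mathrm{der}}\bigr)$ --- uniformly over the $N\asymp\delta^{-d}$ coupled blocks, with precisely this constant, and in a way that explains the hypotheses $\delta^2\leq\norm{\Delta K}_{L^2(\R^d)}$ and $T\geq1$ --- is the heart of the matter: $X_\delta$ is non-Markovian, the blocks are correlated through the drift even though the $W_k$ are independent, and your sketch (a lower comparison of each diagonal block with an Ornstein--Uhlenbeck covariance, off-diagonal blocks dismissed as a ``lower-order perturbation'') is a plan rather than a proof; quantifying the decoupling uniformly in $N$ and comparing a non-Markovian stationary covariance with an OU covariance from below are each nontrivial, and you acknowledge as much. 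As it stands, the proposal proves only the routine outer layer.

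Moreover, the part you call routine is not quite routine as written. You take $A_{\mathrm{der}}=-\partial_s^2$ with Neumann boundary conditions so that $\langle A_{\mathrm{der}}g,g\rangle=\norm{g'}_{L^2}^2$, but you then apply $\mathcal{L}$ as an operator to the range of $D$ and identify the kernel of $\mathcal{L}D$ with $c_0\delta^{-4}\norm{\Delta K}^2D_{k,l}(s,t)-c_0\partial_s^2D_{k,l}(s,t)$. The functions $s\mapsto D_{k,l}(s,t)$ do not satisfy the Neumann conditions at $s\in\{0,T\}$ (their derivative there is $\pm e_{k,l}'$, not $0$), so they are not in the operator domain of $A_{\mathrm{der}}$, $\mathcal{L}D$ is not the integral operator you write down, and the identity $\operatorname{tr}((\mathcal{L}D)^2)\leq\norm{\mathcal{L}D}_{\mathrm{HS}}^2$ with the displayed kernel does not follow. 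If one redoes the computation through the form or the Neumann eigenbasis, boundary terms involving $\partial_sD_{k,l}(s,t)$ at $s=0,T$ survive and must be controlled, e.g.\ by interpolating $\norm{e_{k,l}'}$ between $\norm{e_{k,l}}$ and $\norm{e_{k,l}''}$ and invoking $T\geq1$ --- precisely the kind of first-derivative/interpolation bookkeeping you assert is absent, and plausibly the reason the lemma carries the factor $T$ and only the two extreme norms. Both the key form bound and this boundary bookkeeping need to be supplied before the claimed inequality follows.
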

Adapting \cite[Lemma 6.10]{altmeyer_anisotrop2021} to our setting results in another upper bound. 
\begin{lemma}\label{lem:concrete_lower_bound}
	In the above setting, let $\theta^1\in\Theta$ with $N\geq1$. 
	Then, there exists a constant $c_3>0$, depending only on $K$ and $d$, such that
	\[
		\sum_{k, l=1}^N\left(\delta^{-8}\norm{c_{\theta^0,\delta,k,l}-c_{\theta^1,\delta,k,l}}^2_{L^2([0,T])}+\norm{c_{\theta^0,\delta,k,l}''-c_{\theta^1,\delta,k,l}''}^2_{L^2([0,T])}\right)\le c_3\sum_{k=1}^N\left(|\theta(x_k)|^2+\delta^2\tilde{c}_\theta(x_k)^2\right),
	\]
	with $\tilde{c}_\theta=c-\nabla\cdot\theta /2-|\theta|^2/4$.
\end{lemma}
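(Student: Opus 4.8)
The plan is to follow the computation behind \cite[Lemma 6.10]{altmeyer_anisotrop2021}, adapted to the spatially varying coefficients, so that the $k$-th term on the left-hand side is controlled by the corresponding constant-coefficient bound with $\theta$ and $c$ evaluated at the base point $x_k$. First I would use the stationary covariance formula of Lemma \ref{lem:covFun}(ii) to write, for $0\le t\le T$,
\[
c_{\theta,\delta,k,l}(t)=\int_0^\infty\sc{S^\ast_\theta(t+s)K_{\delta,x_k}}{S^\ast_\theta(s)K_{\delta,x_l}}_{L^2(\Lambda)}\,\diff s,
\]
substitute $s=\delta^2\sigma$, and apply Lemma \ref{rescaledsemigroup} together with the identity $\sc{z_{\delta,y}}{w_{\delta,y'}}_{L^2(\Lambda)}=\sc{z}{w(\cdot+\delta^{-1}(y-y'))}$, which passes to the rescaled semigroups $S^\ast_{\theta,\delta,x_k}$ acting on the fixed function $K$ on the growing domains $\Lambda_{\delta,x_k}$ and exposes an overall factor $\delta^{2}$, the large time argument $(t+s)\delta^{-2}$, and (for $k\neq l$) a translation by $\delta^{-1}(x_k-x_l)$. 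Two time derivatives bring down $(A^\ast_\theta)^2$, i.e.\ an extra $\delta^{-4}$ after rescaling; since $(\delta^{-4})^2=\delta^{-8}$, the contribution $\|c''_{\theta^0,\delta,k,l}-c''_{\theta^1,\delta,k,l}\|_{L^2([0,T])}^2$ is of the same type as $\delta^{-8}\|c_{\theta^0,\delta,k,l}-c_{\theta^1,\delta,k,l}\|_{L^2([0,T])}^2$ with two additional operator powers inserted, which are absorbed by the smoothing estimates of Lemmas \ref{boundS*_s} and \ref{boundS*u}. It thus suffices to bound, separately for $k=l$ and for $k\neq l$, the suitably weighted $L^2([0,T])$-norm of the rescaled kernel differences.

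\emph{Diagonal terms.} For $k=l$ I would expand $S^\ast_{\theta^1,\delta,x_k}-S^\ast_{\theta^0,\delta,x_k}$ by the variation-of-parameters formula (as in the proof of Lemma \ref{lem: shiftsemigroup}), iterated once more, against the perturbation
\[
A^\ast_{\theta^1,\delta,x_k}-A^\ast_{\theta^0,\delta,x_k}=-\delta\,\theta(x_k+\delta\cdot)\cdot\nabla+\delta^2\big(c-\nabla\cdot\theta\big)(x_k+\delta\cdot),
\]
and Taylor-expand $\theta(x_k+\delta\cdot)$, $c(x_k+\delta\cdot)$ and $(\nabla\cdot\theta)(x_k+\delta\cdot)$ around $x_k$, using the $\mathcal H(\beta)$- resp.\ $\mathcal H(\beta-1)$-regularity (together with $\delta\ll h$ and the compact support of $K$) to bound the Peano remainders. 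Tracking the powers of $\delta$ — the $\delta^2$ prefactor, one $\delta$ from each perturbation, a further $\delta$ from the change of variables in the time integral — the first-order transport contribution is bounded by the corresponding constant-coefficient expression with $\theta$ replaced by $\theta(x_k)$, yielding after multiplication by $\delta^{-8}$ the $|\theta(x_k)|^2$-term; the genuinely $\delta^2$-scale contributions — the reaction term, the divergence term produced by differentiating $\theta(x_k+\delta\cdot)$, and the second transport iterate — assemble into $\delta^2\tilde c_\theta(x_k)^2$; and all remaining Taylor and semigroup-switching errors, as well as the error from replacing $S^\ast_{\theta,\delta,x_k}$ by $\e^{sa\Delta}$ on $\R^d$, are of strictly smaller order uniformly in $k$ and $\delta$, controlled via Lemmas \ref{boundS*_01}, \ref{boundS*_s}, \ref{boundS*u}, \ref{FeynmanKac} and the exponential decay of the rescaled semigroups (coming from the Dirichlet boundary condition, respectively from $c-\nabla\cdot\theta\le\gamma<0$), which also makes the $\sigma$-integral finite. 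The $c''$-variant is treated identically with two extra operator powers.

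\emph{Off-diagonal terms.} For $k\neq l$ I would exploit the disjoint-support assumption \ref{ass:lowerBound}(iii) together with the structural identity $K=\Delta^2\tilde K$ of Assumption \ref{ass:lowerBound}(i): $K$, $\nabla K$, $\Delta K$ and every function produced by the perturbation expansion have vanishing moments up to order at least three. A multipole expansion of the $(-\Delta)^{-1}$-type Green's kernel governing the null covariance, combined with the heat-kernel comparison of Lemma \ref{FeynmanKac}(i), then shows that the rescaled kernels (and their $\delta^{-8}$-weighted differences, and the $c''$-analogues) decay polynomially in $|x_k-x_l|$ with a large negative exponent while carrying extra positive powers of $\delta$. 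Since the $x_k$ are $\delta$-separated, $\#\{l:|x_k-x_l|\in[m\delta,(m+1)\delta)\}\lesssim m^{d-1}$, and the resulting series $\sum_{m\geq1}m^{-(1+\varepsilon)}$, $\varepsilon>0$, converges in every dimension, so the entire off-diagonal sum is dominated by the diagonal bound of the previous step.

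\emph{Main obstacle.} The delicate part is the diagonal computation: verifying that, after the second-order Duhamel expansion and the Taylor expansion, the surviving $\delta^2$-scale contributions combine \emph{exactly} into $\tilde c_\theta(x_k)=c(x_k)-\tfrac12\nabla\cdot\theta(x_k)-\tfrac14|\theta(x_k)|^2$ — this is where the conservativity of $\theta$ (Assumption \ref{ass:lowerBound}(ii)) is used, it being what lets the divergence and squared-transport contributions organise into this gauge-type combination — while simultaneously showing that every Taylor remainder, semigroup-switching error, and bounded-domain correction is of order below the target, uniformly in $k$ and $\delta$. Once it is checked that the constants depend only on $d$ and on $K$ through $\|\Delta^2\tilde K\|$-type norms and finitely many moments, summing over $k$ and $l$ yields the asserted bound with $c_3=c_3(K,d)$.
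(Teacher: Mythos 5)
The crux of this lemma is that the second-order (in $\delta$) part of the bound comes out as the specific gauge combination $\tilde c_\theta(x_k)=c(x_k)-\tfrac12\nabla\cdot\theta(x_k)-\tfrac14|\theta(x_k)|^2$, and your diagonal argument never actually produces it: you defer exactly this point ("verifying that \ldots the contributions combine exactly into $\tilde c_\theta(x_k)$") to the end without giving a mechanism. The paper does not obtain $\tilde c_\theta$ from a perturbative Duhamel expansion at all. It uses the conservativity of $\theta$ structurally: writing $\theta=2\nabla\xi$ and conjugating with the multiplication operator $U_{\theta^1}=\e^{\xi}$ diagonalises the generator, $U_{\theta^1}^{-1}A^*_{\theta^1}U_{\theta^1}=\Delta+\tilde c_\theta$, so that $S^*_{\theta^1,\delta,x_k}$ is an explicitly gauge-conjugated heat semigroup and the covariance difference $\kappa_{k,l}$ splits into four terms, each involving a difference of multiplication operators (such as $1-\e^{\tilde c_{\theta^1}(x_k+\delta\cdot)\delta^2(t+t')+\xi(x_k+\delta\cdot)-\xi(x_k)}$) whose operator norm at the base point is of size $\delta|\theta(x_k)|+\delta^2(t+t')|\tilde c_\theta(x_k)|$; from there everything is the smoothing bound of Lemma \ref{boundS*u} with $K=\Delta^2\tilde K$. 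If you instead expand $S^*_{\theta^1,\delta,x_k}-\e^{t\Delta_{\delta,x_k}}$ by variation of parameters against $-\delta\,\theta(x_k+\delta\cdot)\cdot\nabla+\delta^2(c-\nabla\cdot\theta)(x_k+\delta\cdot)$, freeze coefficients at $x_k$ and estimate term by term, the $\delta^2$-scale contributions carry coefficients $(c-\nabla\cdot\theta)(x_k)$, the full Jacobian $D\theta(x_k)$ (from the first-order Taylor term of $\theta(x_k+\delta y)$), and $|\theta(x_k)|^2$ from the iterated transport insertion; these do not reorganise into $\tilde c_\theta(x_k)$ under a triangle-inequality bookkeeping (the off-trace part of $D\theta(x_k)$ and the cross terms between the two transport insertions must cancel in the pairing), so the route you sketch would either prove a bound with a different right-hand side or require you to re-derive the gauge identity by hand inside the expansion. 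This missing diagonalisation step — which is precisely why conservativity is an assumption of the lemma — is a genuine gap, not a detail.

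The peripheral parts are closer to the paper or at least repairable, but note two deviations. For the $c''$-terms the paper does exactly what you propose (replace $K_{\delta,x_k}$ by $\delta^{-4}(A^2_{\theta,\delta,x_k}K)_{\delta,x_k}$). For $k\neq l$, however, the paper does not use vanishing moments or a multipole expansion: it uses the Feynman--Kac heat-kernel bound of Lemma \ref{FeynmanKac}(i) to get Gaussian decay $t^{-d/2}\e^{-c'|x_k-x_l|^2/(\delta^2 t)}$, interpolates this with the time-decay bound via $\min(a,b)\le a^{1-\varepsilon}b^{\varepsilon}$ — this interpolation is needed because the quantity to control is an $L^2([0,T])$-norm in $t$, so spatial and temporal decay must be combined, a point your sketch does not address — and then sums using the packing estimate $\sum_{l\neq k}|x_k-x_l|^{-p}\lesssim\delta^{-p}$ for $p>d$ (Lemma \ref{lem:sum:inverse:packing}), which is the rigorous form of your shell-counting argument. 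The role of $K=\Delta^2\tilde K$ in the paper is to gain time decay $(1\wedge(t+t')^{-4-d/2+\varepsilon})$ through Lemma \ref{boundS*u}, not spatial moment cancellation.
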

Let $c_4,c_5>0$ be constants independent of $N$ and $h$. 
Consider a kernel function $V\in C^\infty_c(\R^d;[0,\infty])$ with compact support in $[-1/2,1/2]^d$. 
Define the potential $\xi(y)=c_4h^{\beta+1} V((y-x)/h)$, and let $\theta=\nabla\xi$.
We consider hence the alternative
$$\theta(y)=c_4h^\beta (\nabla V)\left(\frac{y-x}{h}\right)$$
and a reaction function $c\colon\Lambda\to\R_-$ small enough. 
For $h=c_5N^{-1/(2\beta+d)}$, we have that 
$$c_3\sum_{k=1}^N\left(|\theta(x_k)|^2+\delta^2\tilde{c}_\theta(x_k)^2\right)\lesssim \sum_{k=1}^N|\theta(x_k)|^2\lesssim Nh^{d}h^{2\beta}\lesssim1.$$
The claim of Theorem \ref{thm:lower:bound:M>1} follows now from Lemma \ref{lem:Gaussian:lower:bound} in combination with Lemmas \ref{lem:seriesBound:M>1} and \ref{lem:concrete_lower_bound} and sufficiently small constants $c_4,c_5$.
\qed

\subsection{Remaining proofs}\label{app:additional proofs}
\subsubsection{Remaining proofs for Section \ref{subs: pointwise}}
\begin{proof}[Proof of Corollary \ref{cor: unknown a}]
	We decompose
	$$\tilde{\theta}_\delta(x)=\theta(x)-(\mathcal{I}_\delta^x)^{-1}\mathcal{M}_\delta^x\norm{K}_{L^2(\R^d)}+(\mathcal{I}_\delta^x)^{-1}\mathcal{R}_\delta^x+(\hat{a}_\delta-a)(\mathcal{I}_\delta^x)^{-1}\mathcal{U}_\delta^x$$
	with
	$$\mathcal{U}_\delta^x=\sum_{k=1}^Nw_k(x)\int_0^TX^\nabla_{\delta,k}(t)X^\Delta_{\delta,k}(t)\diff t.$$
	Combining Lemma \ref{lem: ConvFisher}(iii) and Lemma \ref{ConvouterVar}(i), it follows by the arguments given in Section \ref{sec: detailederror} that $\mathcal{U}_\delta^x=O_{\P}(1)$. 
	Thus, the claim hold once $\hat{a}_\delta$ satisfies \eqref{eq: orda}. 
	Just as the estimator $\hat{\theta}_\delta(x)$ described in \eqref{eq: est2}, the estimates in \eqref{eq: esta} can again be decomposed into a bias and martingale part. 
	While the orders of the appearing coefficients differ due to a different scaling in $\delta$, all terms can be controlled with the techniques used in Section \ref{sec: fisherinfo} and \ref{sec: restterm}. 
	It is therefore straightforward to verify that both given candidates for $\hat{a}_\delta$ satisfy
	$$\hat{a}_\delta-a\in O_{\P}(\delta h+\delta^2+\delta(Nh^d)^{-1/2})$$
	and thus fulfill \eqref{eq: orda}.
\end{proof}
\begin{proof}[Proof of Corollary \ref{cor: intrisk}]
	By decomposing the integral and {using \eqref{eq: extraresult} }from Theorem \ref{thm: maintheorem}, we obtain 
	\begin{align*}
		\int_\Lambda\left(\hat{\theta}_\delta(x)-\theta(x)\right)^2\diff x&=\int_{\mathcal{J}}\left(\hat{\theta}_\delta(x)-\theta(x)\right)^2\diff x+\int_{\Lambda\setminus\mathcal{J}}\left(\hat{\theta}_\delta(x)-\theta(x)\right)^2\diff x\\
		&=O_{\P}\left(h^{2\beta}+\frac{1}{Nh^d}\right)+\int_{\Lambda\setminus\mathcal{J}}\left(\hat{\theta}_\delta(x)-\theta(x)\right)^2\diff x.
	\end{align*}
	Due to the decomposition \eqref{eq: est2} and \eqref{eq: estwhole}, it holds for $x\notin\mathcal{J}$ and appropriate $x_0=x_0(x)\in\mathcal{J}$ that
	$$\hat{\theta}_\delta(x)=\hat{\theta}_\delta(x_0)=\theta(x)+(\theta(x_0)-\theta(x))+O_{\P}(h^\beta+(Nh^d)^{-1/2}).$$
	Thus, plugging this into the previous display yields by the Hölder regularity of $\theta$,
	\begin{align*}
		\int_{\Lambda\setminus\mathcal{J}}\left(\hat{\theta}_\delta(x)-\theta(x)\right)^2\diff x&\lesssim O_{\P}\left(h^{2\beta}+\frac{1}{Nh^d}\right)
		+\int_{\Lambda\setminus\mathcal{J}}\left(\theta(x)-\theta(x_0)\right)^2\diff x\\
		&\lesssim O_{\P}\left(h^{2\beta}+\frac{1}{Nh^d}\right)+\int_{\Lambda\setminus\mathcal{J}}\textrm{dist}^2(x,\mathcal{J})\diff x\\
		&\lesssim O_{\P}\left(h^{2\beta}+\frac{1}{Nh^d}\right)+d^2_{\max}\lambda(\Lambda\setminus\mathcal{J}).
	\end{align*}
\end{proof}
\begin{proof}[Proof of Lemma \ref{lem: existweight}]
	We use the well-known theory for local polynomial estimators, more specifically, for the local linear case. 
	The one-dimensional case in \cite[Chapter 1.6]{tsybakov_introduction_2008} can be easily extended to the general $d$-dimensional version. 
	By a first order multivariate Taylor expansion for a function $f\colon \R^d\rightarrow\R$, we can write for $y,z\in\R^d$, a multiindex $\alpha$, and any $h>0$,
	$$f(z)\approx\sum_{0\leq|\alpha|\leq 1}\frac{D^\alpha f(y)}{\alpha!}(z-y)^\alpha=\xi^\top(y)U\left(\frac{z-y}{h}\right),$$
	where
	\begin{equation*}
		\label{eq: taylorU}
		U(u)=\left((u^\alpha/\alpha!)_{0\leq|\alpha|\leq 1}\right)^\top,\quad\xi(x)=
		\left((D^\alpha f(x)h^{|\alpha|})_{0\leq|\alpha|\leq 1}\right)^\top.
	\end{equation*}
	Modifying \cite[Proposition 1.12]{tsybakov_introduction_2008} and \cite[Lemma 1.3]{tsybakov_introduction_2008} to their multivariate counterparts, it follows that the weights $w_k(x)$ are reproducing of order 1 and satisfy Assumption \ref{ass: total}(iii) if (LP1)-(LP3) hold true.
\end{proof}

\subsubsection{Remaining proofs for Section \ref{sec: rescaledsemigroup}}
\begin{proof}[Proof of Lemma \ref{boundS*_01}]
	Since $A_{\theta}$ is elliptic, it follows as in the proof of \cite[Proposition A.4]{altmeyer_nonparametric_2020}, after formally replacing $\Delta_{\theta(\delta\cdot)}$ and $\min_{x}\theta(x)$ contained there by $a\Delta$ and the lower bound on the spectrum of $a$, respectively, that $A^\ast_{\theta,\delta,y}$ is a sectorial operator on $L^2(\Lambda_{\delta,y})$, that is, there exists a constant $M$, independent of $\delta$ and $y\in\mathcal{J}$, such that
	\begin{equation*}
		\norm{(\lambda I-A^\ast_{\theta,\delta,y})^{-1}}_{L^2(\Lambda_{\delta,y})} \leq \frac{M}{|\lambda-C\delta^2|}
	\end{equation*}
	for all $\lambda\in\Sigma_{\eta}=\{\rho\in\mathbb{C}:|\text{arg}(\rho-C\delta^2)|<\eta\}\setminus \{C\delta^2\}$ with some $\eta\in(\pi/2,\pi)$ or, equivalently, for all $\lambda\in\Sigma_{\eta}+C\delta^2$,
	\begin{equation*}
		\norm{(\lambda I+(C\delta^2-A^\ast_{\theta,\delta,y}))^{-1}}_{L^2(\Lambda_{\delta,y})} \leq \frac{M}{|\lambda|}.
	\end{equation*}
	The shifted operator $C\delta^2-A^\ast_{\theta,\delta,y}$ generates the semigroup $\e^{-C\delta^2t}S^*_{\theta,\delta,y}(t)$, and so the result follows from \cite[Proposition 2.1.1]{lunardi_analytic_1995}.
\end{proof}

\begin{proof}[Proof of Lemma \ref{FeynmanKac}]
	The proof is a combination of \cite[Proposition 3.5]{altmeyer_nonparametric_2020} and \cite[Lemma 6.2]{altmeyer_anisotrop2021}. 
	For fixed $y\in \mathcal{J}$, $u\in \R^d$, it holds by a Feynman--Kac representation that
	\begin{equation*}
		\label{representation S}
		S^\ast_{\vartheta,\delta,y}(t)z(u)=\tilde{\mathbb{E}}_u\left[z(Y^{(\delta,y)}_t)\exp\left(\int_{0}^{t}\tilde{c}_{\delta,y}(Y_s^{(\delta,y)})\diff s\right)\mathbf{1}\left(t<\tau_{\delta,y}(Y^{(\delta,y)})\right)\right],
	\end{equation*}
	where the process $Y^{(\delta,y)}$ takes  the form 
	\[
	\diff Y^{(\delta,y)}_t=\tilde{b}_{\delta,y}(Y_t^{(\delta,y)}) \diff t+\sqrt{2}a^{1/2}\diff\tilde{W}_t,\hspace*{2mm}Y^{(\delta,y)}_0=u\in\mathbb{R}^d,
	\]
	with $\tilde{b}_{\delta,y}(\cdot)=-\delta \theta(y+\delta\cdot)$, $\tilde{c}_{\delta,y}(\cdot)=\delta^2(c(y+\delta\cdot)-\nabla\cdot\theta(y+\delta\cdot))$, a scalar Brownian motion $\tilde{W}$, and with the stopping times
	$\tau_{\delta,y}\coloneqq\inf\{t\geq0:Y_t^{(\delta,y)}\notin\Lambda_{\delta,y}\}$.  
	
	(i). By  upper bounding the transition densities of $Y^{(\delta,y)}$ as in \cite[Proposition 3.5(i)]{altmeyer_nonparametric_2020}, we get
	$$\sup_{y\in\mathcal{J}}(S^\ast_{\theta,\delta,y}(t)|z|)(u)\leq c_3\e^{c_1t\delta^{2}}(\e^{c_2t\Delta}|z|)(u),$$
	where the right hand side is in $L^2(\R^d)$.
	
	(ii).  By dense approximation, it is enough to consider $z\in C_c(\bar{\Lambda})$ and such that $z$ is supported in $\Lambda_{\delta,y}$ for $\delta$ small enough, hence, $z|_{\Lambda_{\delta,y}}=z$. 
	With $(\e^{ta\Delta}z)(u)=\tilde{\mathbb{E}}_u[z(Y_t^{(0)})]$, decompose
	\[
	S^\ast_{\vartheta,\delta,y}(t)z(u)-\e^{ta\Delta}z(u)= T_1(y,u)+T_2(y,u)+T_3(y,u)
	\]
	with
	\begin{align*}
		T_1(y,u)&\coloneqq \tilde{\mathbb{E}}_u\left[z(Y^{(\delta,y)}_t)-z(Y^{(0)}_t)\right],\\
		T_2(y,u)&\coloneqq \tilde{\mathbb{E}}_u\left[z(Y^{(\delta,y)}_t)\left(\exp\left(\int_{0}^{t}\tilde{c}_{\delta,y}(Y_s^{(\delta,y)})\diff s\right)-1\right)\mathbf{1}(t<\tau_{\delta,y}(Y^{(\delta,y)}))\right],\\
		T_3(y,u)&\coloneqq -\tilde{\mathbb{E}}_u\left[z(Y^{(\delta,y)}_t)\mathbf{1}(t\geq \tau_{\delta,y}(Y^{(\delta,y)}))\right].
	\end{align*} 
	The arguments in \cite[Proposition 3.5(ii)]{altmeyer_nonparametric_2020} yield \begin{equation*}\sup_{y\in\mathcal{J}}|T_1(y,u)|\rightarrow0\quad \text{ and } \quad\sup_{y\in\mathcal{J}}|T_2(y,u)|\rightarrow0,
	\end{equation*}
	while compactness of $\mathcal{J}$ guarantees for sufficiently small $\delta$ the existence of a ball $B_{\rho\delta^{-1}}\subset\bigcap_{y\in\mathcal{J}}\Lambda_{\delta,y}$ with centre $0$ and radius $\rho\delta^{-1}$ for some $\rho>0$. 
	Using that the running maximum of a Brownian motion decays exponentially, see, for instance \cite[Problem 2.8.3]{karatzas_brownian_1998}, we conclude similarly to \cite[Lemma 6.2(ii)]{altmeyer_anisotrop2021} that
	\begin{align*}
		\label{eq: pointwFeynm}
		\begin{split}
			\sup_{y\in\mathcal{J}}|T_3(y,u)|&=\sup_{y\in \mathcal{J}}|\tilde{\mathbb{E}}_u\left[z(Y_t)\mathbf{1}(t\geq \tau_{\delta,y}(Y))\right]|\\
			&\lesssim \sup_{y\in\mathcal{J}}\tilde{\mathbb{P}}_{u}(\tau_{\delta,y}(Y)\leq t)\leq \tilde{\mathbb{P}}_{u}(\max_{0\leq s\leq t} |Y_s| \geq \rho\delta^{-1})\\ 
			& \leq \tilde{\mathbb{P}}_{u}(\max_{0\leq s\leq t} |\tilde{W}_s| \geq \tilde{\rho}\delta^{-1})\leq \delta t^{1/2} C\e^{-C\delta^{-2}t^{-1}}\to 0,
		\end{split}
	\end{align*}
	for a modified constant $\tilde{\rho}$. 
	This implies pointwise, for all $u\in \R^d$, 
	\begin{equation*}
		\sup_{y\in \mathcal{J}}|S^\ast_{\vartheta,\delta,y}(t)z(u)- \e^{ta\Delta}z(u)|\rightarrow 0,\quad \delta\rightarrow 0.\label{eq:semGroupConvergence}
	\end{equation*}
	By (i), we know $\sup_{y\in\mathcal{J}}|(S^*_{\theta,\delta,y}(t)z)(u)|\in L^2(\R^d)$. Dominated convergence yields the claim.
	
	(iii).  We use the decomposition in (ii). The process $Y^{(\delta,y)}$ is independent of $\delta$ and $\tilde{b}_{\delta,y}=0$, $\tilde{c}_{\delta,y}=0$. This implies $T_1(y,u)=T_2(y,u)=0$ for all $y\in\mathcal{J}$ and $u\in\R^d$. Hölder's inequality thus yields 
	\begin{align*}
		\sup_{y\in\mathcal{J}}\norm{(\bar{S}_{\delta,y}(t)-\e^{ta\Delta})z}_{L^2(\R^d)}
		&\leq\sup_{y\in\mathcal{J}}\left(\norm{(\bar{S}_{\delta,y}(t)-\e^{ta\Delta})z}_{L^1(\R^d)}\norm{(\bar{S}_{\delta,y}(t)-\e^{ta\Delta})z}_{L^\infty(\R^d)}\right)^{1/2}\\
		&\lesssim \delta^{1/2}t^{1/4}\e^{-\delta^{-2}t^{-1}/2}.
	\end{align*}
\end{proof}

\begin{proof}[Proof of Lemma \ref{boundS*u}]
	While the result matches \cite[Lemma 6.4]{altmeyer_anisotrop2021}, the proof differs as we cannot rely on diagonalisability of $S^*_{\theta,\delta,y}(t)$ in the nonparametric framework.
	
	We write $u=V_{\delta,y}z$. 
	Let first $s=0$ such that $H_0^{-s,p}(\R^d)=L^p(\R^d)$. 
	Approximating $u$ by continuous and compactly supported functions, we obtain by Lemma \ref{FeynmanKac}(i) and hypercontractivity of the heat kernel on $\mathbb{R}^d$ uniformly in $y\in\mathcal{J}$
	\begin{align*}
		\norm{S^*_{\vartheta,\delta,y}(t) u }_{L^2(\Lambda_{\delta,y})}
		& \lesssim \e^{c_1t\delta^2} \norm{ \e^{Ct\Delta}|u| }_{L^2(\R^d)}\\
		& \lesssim  \e^{c_1t\delta^2}t^{-\gamma}\norm{u}_{L^p(\mathbb{R}^d)}\lesssim \e^{c_1t\delta^2}t^{-\gamma}\norm{z}_{L^p(\R^d)}.
	\end{align*}
	This yields the result for $s=0$. These inequalities hold also for $p=1$, thus proving the supplement of the statement. 
	For $s>0$ and $p>0$, we apply first Lemma \ref{boundS*_s} and then the inequality from the last display to $(2C\delta^2-A^*_{\theta,\delta,y})^{-s/2}u$ instead of $u$.
	Thus, uniformly in $y\in\mathcal{J}$,
	\begin{align*}
		\norm{S^*_{\vartheta,\delta,y}(t) u}_{L^2(\Lambda_{\delta,y})}
		& = \norm{(2C\delta^2-A^*_{\theta,\delta,y})^{s/2} S^*_{\vartheta,\delta,y}(t) (2C\delta^2-A^*_{\theta,\delta,y})^{-s/2} u}_{L^2(\Lambda_{\delta,y})}\\
		& \lesssim  \e^{c_1t\delta^{-2}}t^{-s/2}\norm{S^*_{\vartheta,\delta,y}(t) (2C\delta^2-A^*_{\vartheta,\delta,y})^{-s/2} u}_{L^2(\Lambda_{\delta,y})} \\
		& \lesssim \e^{c_1t\delta^{-2}}t^{-s/2-\gamma} \norm{(2C\delta^2-A^*_{\vartheta,\delta,y})^{-s/2} u}\norm_{L^{p}(\Lambda_{\delta,y})}\\
		& \lesssim \e^{c_1t\delta^{-2}}t^{-s/2-\gamma} \norm{(-\bar{A}_{\vartheta,\delta,y})^{-s/2} u}_{L^{p}(\Lambda_{\delta,y})}\\
		&\lesssim \e^{c_1t\delta^{-2}}t^{-s/2-\gamma}\norm{u}_{H^{-s,p}(\Lambda_{\delta,y})}\\
		&\lesssim \e^{c_1t\delta^{-2}}t^{-s/2-\gamma}V_{\operatorname{op}}\norm{z}_{L^p(\Lambda_{\delta,y})}.
	\end{align*}
\end{proof}

\subsubsection{Remaining proofs for Section \ref{sec: properties measurements}}
\begin{proof}[Proof of Lemma \ref{lem: ConvFisher}]
	Lemma \ref{boundS*u} applied for $s=2$ shows that, for $v\in\{u,w\}$ and any $\varepsilon>0$, 
	\begin{equation}
		\sup_{y\in\mathcal{J}}\norm{S^\ast_{\vartheta,\delta,y}(t) v}_{L^2(\Lambda_{\delta,y})} \lesssim_\varepsilon 1\wedge t^{-1-d/4+\varepsilon}.
		\label{eq:fisher_5}
	\end{equation}
	
	(i). 
	Applying \eqref{eq:fisher_5} to $u$ and $w$, the Cauchy--Schwarz inequality gives for all dimensions $d\geq 1$ that
	\begin{equation*}
		|f_0(t)| \lesssim \norm{\e^{ta\Delta} {u}}_{L^2(\R^d)} \norm{\e^{ta\Delta} {w}}_{L^2(\R^d)}
		\lesssim 1\wedge t^{-2}.\label{eq:fisher_2}
	\end{equation*}
	This yields $f_0 \in L^1([0,\infty))$, proving the claim.		
	
	(ii). Lemma \ref{lem:covFun} and Lemma \ref{rescaledsemigroup}(ii) imply that
	\[ 
	\delta^{-2}\int_0^T \mathrm{Cov}(\langle X(t),u_{\delta,x}\rangle,\langle X(t),w_{\delta,x}\rangle) \diff t = \int_0^T \int_{0}^{t\delta^{-2}} f_{t,\delta,y}(t')\diff t'\diff t,\]
	with
	\begin{align}
		f_{t,\delta,y}(t')=\langle  {S}^\ast_{\theta,\delta,y}(t'){u},{S}^\ast_{\theta,\delta,y}(t'){w}\rangle_{L^2(\Lambda_{\delta,y})}\mathbf{1}(0\leq t' \leq t\delta^{-2}).\label{eq:f_t_delta}
	\end{align}
	Note that $\int_{0}^{T}\int_0^{\infty}f_0(t')\diff t'\diff t=T\psi({u},{w})$,  and write
	\begin{align*}
		& \sup_{y\in \mathcal{J}}\left| \int_0^T \int_0^{t\delta^{-2}} f_{t,\delta,y}(t')\diff t' \diff t - \int_0^T\int_0^{\infty}f_0(s)\diff t'\diff t\right|\\
		&\hspace*{1em} \leq \int_0^T \int_0^{t\delta^{-2}} \sup_{y\in \mathcal{J}}\left| f_{t,\delta,y}(t') - f_0(t')\right| \diff t'\diff t 
		+\int_0^T \int_{t\delta^{-2}}^{\infty}|f_0(t')|\diff t'\diff t\label{eq:ConvFisher_1}.
	\end{align*}
	Lemma \ref{FeynmanKac}(ii) readily yields the pointwise convergence $|f_{t,\delta,y}(t')-f_0(t')|\rightarrow 0$ as $\delta\rightarrow 0$, uniformly in $y\in \mathcal{J}$ and for any fixed $t,t'>0$. 
	Dominated convergence, i.e., \eqref{eq:fisher_5}, implies convergence to zero.
	
	(iii). Define $\bar{f}_{t,\delta,y}$ analogously to $f_{t,\delta,y}$ from \eqref{eq:f_t_delta}, now with respect to the semigroup $\bar{S}_{\delta,y}(t)$. The first step of the proof is to reduce the argument to $\bar{f}_{t,\delta,y}$. More specifically, we will show that
	\begin{equation*}
		\sup_{y\in\mathcal{J}}\int_0^T\int_0^{t\delta^{-2}}f_{t,\delta,y}(t')\diff t'\diff t =  \sup_{y\in\mathcal{J}}\int_0^T\int_0^{t\delta^{-2}}\bar{f}_{t,\delta,y}(t')\diff t'\diff t + O(\delta). \label{eq:fisher_3}
	\end{equation*}
	For doing so, consider the decomposition $f_{t,\delta,y}(t') - \bar{f}_{t,\delta,y}(t') = f^{(1)}_{t,\delta,y}(t') + f^{(2)}_{t,\delta,y}(t')$ with
	\begin{align*}
		f^{(1)}_{t,\delta,y}(t')&\coloneqq \langle (S^\ast_{\vartheta,\delta,y}(t')-\bar{S}_{\delta,y}(t'))u,S^\ast_{\vartheta,\delta,y}(t')w\rangle_{L^2(\Lambda_{\delta,y})}\mathbf{1}(0\leq t' \leq t\delta^{-2}),\\
		f^{(2)}_{t,\delta,y}(t')&\coloneqq \langle \bar{S}_{\delta,y}(t')u,(S^\ast_{\vartheta,\delta,y}(t')-\bar{S}_{\delta,y}(t'))w\rangle_{L^2(\Lambda_{\delta,y})}\mathbf{1}(0\leq t' \leq t\delta^{-2}).
	\end{align*}
	The variation of parameters formula, see p.~162 in \cite{EngNag00}, shows
	\begin{align*}
		S^\ast_{\vartheta,\delta,y}(t')-\bar{S}_{\delta,y}(t')	&=\int_{0}^{t'}\bar{S}_{\delta,y}\left(s\right)\left(A_{\vartheta,\delta,y}^\ast-\bar{A}_{\delta,y}\right)S^\ast_{\vartheta,\delta,y}\left(t'-s\right)\diff s\\
		&= -\delta\int_{0}^{t'}\bar{S}_{\delta,y}\left(s\right)\left(\theta(y+\delta\cdot)\cdot\nabla+\delta\varphi_\theta(y+\delta\cdot)\right)S^\ast_{\vartheta,\delta,y}\left(t'-s\right)\diff s.
	\end{align*} 
	Letting $\tilde{w}=\bar{S}_{\delta,y}(s) S^*_{\theta,\delta,y}(t')w$, Lemma \ref{boundS*_s} applied for $s=1/2$ gives  
	\begin{align*}
		\norm{\nabla\tilde{w}}_{L^{2}(\Lambda_{\delta,y})}
		& \lesssim \norm{(-\bar{A}_{\delta,y})^{1/2}\bar{S}_{\delta,y}(s)S^\ast_{\theta,\delta,y}(t')w}_{L^{2}(\Lambda_{\delta,y})}\lesssim (s)^{-1/2}\norm{S^\ast_{\theta,\delta,y}(t')w}_{L^2(\Lambda_{\delta,y})}.
	\end{align*} 
	Note furthermore that the adjoint of $\theta(y+\delta\cdot)\cdot\nabla$ is given by 
	\[
	-\theta(y+\delta\cdot)\cdot\nabla-\delta\varphi_\theta(y+\delta\cdot)-\delta c(y+\delta\cdot).
	\] 
	Consequently, integration by parts, the Cauchy--Schwarz inequality and \eqref{eq:fisher_5} show that, for any sufficiently small $\varepsilon>0$, $s\leq T\delta^{-2}$, uniformly in $y\in\mathcal{J}$,
	\begin{align}
		\label{eq: varparam}
		& \left|\delta^{-1}\int_0^{t\delta^{-2}}f^{(1)}_{t,\delta,y}(t')\diff t'\right| \\
		&\hspace*{1em} = \left| \int_0^{t\delta^{-2}}\int_{0}^{t'}\langle\bar{S}_{\delta,y}(s)(\theta(y+\delta\cdot)\cdot\nabla+\delta\varphi_\theta(y+\delta\cdot))S^\ast_{\theta,\delta,y}(t'-s)u,\right.S^\ast_{\theta,\delta,y}(t')w\rangle_{L^2(\Lambda_{\delta,y})} \diff s\diff t' \Bigg|\nonumber \\
		&\hspace*{1em} = \left| \int_0^{t\delta^{-2}}\int_{s}^{t\delta^{-2}}\langle\bar{S}_{\delta,y}(s)(\theta(y+\delta\cdot)\cdot\nabla+\delta\varphi_\theta(y+\delta\cdot))S^\ast_{\theta,\delta,y}(t'-s)u,\right.\nonumber S^\ast_{\theta,\delta,y}(t')w\rangle_{L^2(\Lambda_{\delta,y})} \diff t'\diff s \Bigg|\nonumber \\
		&\hspace*{1em} = \left| \int_0^{t\delta^{-2}}\int_{s}^{t\delta^{-2}}\langle S^\ast_{\theta,\delta,y}(t'-s)u,\right.\nonumber(\theta(y+\delta\cdot)\cdot\nabla-\delta c(y+\delta\cdot)) \bar{S}_{\delta,y}(s)S^\ast_{\theta,\delta,y}(t')w\rangle_{L^2(\Lambda_{\delta,y})} \diff t'\diff s \Bigg|\nonumber \\
		&\hspace*{1em} \lesssim  \int_0^{t\delta^{-2}}\int_{0}^{t\delta^{-2}} \norm{S^\ast_{\theta,\delta,y}(t')u}_{L^2(\Lambda_{\delta,y})}s^{-1/2} \norm{S^\ast_{\theta,\delta,y}(t'+s)w}_{L^2(\Lambda_{\delta,y})} \diff t' \diff s\lesssim 1.\nonumber
	\end{align}
	The bound for $f^{(2)}_{t,\delta,y}$ is obtained similarly. 
	We will conclude by proving that
	\begin{equation}
		\sup_{y\in\mathcal{J}}\int_0^T\int_0^{t\delta^{-2}}\bar{f}_{t,\delta,y}(t')\diff t'\diff t =  o(\delta). \label{eq:fisher_4}
	\end{equation}
	By Assumption \ref{ass: total}, there exists a compactly supported function $z$, given by $z=(D^{\alpha}\bar{K})/a$, such that $u=(-\bar{A})z=(-\bar{A}_{\delta,y})z$ for sufficiently small $\delta$.
	As $\bar{S}_{\delta,y}(t')$ is self-adjoint, 
	\begin{align*}
		\int_0^{t\delta^{-2}}\bar{f}_{t,\delta,y}(t')\diff t' & = \int_0^{t\delta^{-2}} \sc{\bar{S}_{\delta,y}(2t'){u}}{{w}}_{L^2(\Lambda_{\delta,y})}\diff t' \\ 
		& = \frac{1}{2}\sc{(I-\bar{S}_{\delta,y}(2t\delta^{-2}))(-\bar{A}_{\delta,y})^{-1}{u}}{{w}}_{L^2(\Lambda_{\delta,y})}\\
		& = \frac{1}{2}\sc{z}{{w}}_{L^2(\Lambda_{\delta,y})} - \frac{1}{2}\sc{\bar{S}_{\delta,y}(2t\delta^{-2})z}{{w}}_{L^2(\Lambda_{\delta,y})}.
	\end{align*}
	The first summand vanishes, as can be seen from 
	\begin{align*}
		\frac{1}{2}\sc{z}{{w}}_{L^2(\Lambda_{\delta,y})} & =  \frac{1}{2}\sc{z}{{w}}_{L^2(\R^d)}=\int_0^\infty\sc{\e^{2ta\Delta}(-a\Delta)z}{{w}}_{L^2(\R^d)}\diff t\\
		&=\int_0^{\infty}\sc{\e^{ta\Delta}{u}}{\e^{ta\Delta}{w}}_{L^2(\R^d)}\diff t=\psi({u},{w})=0.
	\end{align*}
	Consequently, \eqref{eq:fisher_4} follows from Lemma \ref{boundS*u} such that, uniformly in $y\in\mathcal{J}$,
	\begin{align*}
		\left|\int_0^T\int_0^{t\delta^{-2}}\bar{f}_{t,\delta,y}(t')\diff t'\diff t\right| & \leq  \int_0^T \frac{1}{2}|\sc{\bar{S}_{\delta,y}(2t\delta^{-2})z}{{w}}_{L^2(\Lambda_{\delta,y})}|\diff t \\
		& \lesssim \delta^2 \int_0^{T\delta^{-2}} \norm{\bar{S}_{\delta,y}(t)z}_{L^2(\Lambda_{\delta,y})}\norm{\bar{S}_{\delta,y}(t){w}}_{L^2(\Lambda_{\delta,y})}\diff t \\ 
		& \lesssim \delta^2 \int_0^{T\delta^{-2}} (1\wedge t^{-d/2-1+2\varepsilon})\diff t = O(\delta^2).
	\end{align*}
\end{proof}

\begin{proof}[Proof of Lemma \ref{ConvouterVar}]
	Using Wick's theorem (see \cite[Theorem 1.28]{janson_gaussian_1997}), write 
	\begin{equation*}
		\delta^{-6}\mathrm{Var}\left(\int_{0}^{T}\langle X(t),u_{\delta,y}\rangle\langle X(t),w_{\delta,y}\rangle \diff t\right) = 2V_1+2V_2,   
	\end{equation*}
	where $V_1=V(u,u,w,w)$, $V_2=V(u,w,w,u)$, and, for $v,v',z,z'\in L^2(\Lambda_{\delta,y})$,		
	\begin{align*}
		V(v,v',z,z')
		& =\delta^{-6} \int_0^T \int _0^t \mathrm{Cov}(\sc{X(t)}{v_{\delta,y}},\sc{X(s)}{v'_{\delta,y}})\mathrm{Cov}(\sc{X(t)}{z_{\delta,y}},\sc{X(s)}{z'_{\delta,y}})\diff s\diff t\nonumber\\
		& =\int_{0}^{T}\int_{0}^{t\delta^{-2}}\int_{0}^{t\delta^{-2}-s}f_{\delta,y}((s+r),v),(r,v')\diff r\int_{0}^{t\delta^{-2}-s}f_{\delta,y}((s+r',z),(r',z'))\diff r'\diff s\diff t,	
		\label{eq: reprV}
	\end{align*}
	with 
	\[f_{\delta,y}((l,v),(l',z))=\langle {S}^\ast_{\theta,\delta,y}(l){v},{S}^\ast_{\theta,\delta,y}(l'){z}\rangle_{L^2(\Lambda_{\delta,y})},\quad \text{ for }0\leq l,l'\leq T\delta^{-2}.
	\]
	Since the arguments for treating both terms are similar, we restrict ourselves to the upper bound for $V_1$. 
	
	(i). By the Cauchy--Schwarz inequality and \eqref{eq:fisher_5}, we find for any $\varepsilon>0$ that
	\begin{equation}
		\label{eq: boundfvar1}
		\begin{split}
			\sup_{y\in \mathcal{J}}|f_{\delta,y}((s+r,u),(r,u))|&\lesssim\sup_{y\in \mathcal{J}}\norm{ {S}^\ast_{\theta,\delta,y}(s+r){u}}_{L^2(\Lambda_{\delta,y})}\sup_{y\in \mathcal{J}}\norm{ {S}^\ast_{\theta,\delta,y}(r){u}}_{L^2(\Lambda_{\delta,y})}\\
			&\lesssim_\varepsilon (1\wedge (s+r)^{-1-d/4+\varepsilon})(1\wedge r^{-1-d/4+\varepsilon}).
		\end{split}
	\end{equation}
	Similar results are obtained for $w$. Hence,
	\begin{align*}
		\sup_{y\in \mathcal{J}}|V_1|&\lesssim\int_{0}^{T\delta^{-2}}(1\wedge s^{-2-d/2+2\varepsilon})\diff s\int_{0}^{T\delta^{-2}}(1\wedge r^{-1-d/4+\varepsilon})\diff r\int_{0}^{T\delta^{-2}}(1\wedge r'^{-1-d/4+\varepsilon})\diff r'\\
		&\lesssim1.
	\end{align*}
	
	(ii). Note that $$\sup_{y\in\mathcal{J},|y-x|\leq h}\norm{S^*_{\theta,\delta,y}(t)g^{(\theta,y,\delta)}\cdot\nabla K}_{L^2(\Lambda_{\delta,y})}\lesssim h(1\wedge t^{-d/4+\varepsilon}),$$ implying that
	\begin{align*}
		\sup_{y\in\mathcal{J},|y-x|\leq h}|f_{\delta,y}((s+r,w),(r,w))|&\lesssim h^2(1\wedge (s+r)^{-d/4+\varepsilon})(1\wedge r^{-d/4+\varepsilon}).
	\end{align*}
	Combining this with \eqref{eq: order h} and \eqref{eq: boundfvar1} gives
	\begin{flalign*}
		&\sup_{y\in \mathcal{J},|y-x|\leq h}|V_1|\lesssim \int_{0}^{T\delta^{-2}}h^2(1\wedge r^{-d/2+2\varepsilon})\diff r\lesssim h^2(1\vee \delta^{-2+d-4\varepsilon})\lesssim h^{2\beta}\delta^{-2}.
	\end{flalign*}
	
	(iii).
	The result follows similarly to part (ii), noting now that $$\sup_{y\in\mathcal{J},|y-x|\leq h}\norm{S^*_{\theta,\delta,y}(t)\varphi_\theta(y+\delta\cdot)K}_{L^2(\Lambda_{\delta,y})}\lesssim (1\wedge t^{-d/4+\varepsilon}),$$ and thus
	\begin{flalign*}
		&\sup_{y\in \mathcal{J},|y-x|\leq h}|V_1|\lesssim \int_{0}^{T\delta^{-2}}(1\wedge r^{-d/2+2\varepsilon})\diff r\lesssim (1\vee \delta^{-2+d-4\varepsilon})\lesssim h^{2\beta}\delta^{-4}.
	\end{flalign*}
\end{proof}

\subsubsection{Remaining proofs for Section \ref{sec: detailederror}}
\begin{proof}[Proof of Lemma \ref{lem: shiftsemigroup}]
	We start with deriving the following useful upper bound, which holds for any $\varepsilon>0$, and which will be applied several times:
	Lemma \ref{boundS*u} yields
	\begin{align}
		\label{eq: sharpboundg}
		\sup_{y\in\mathcal{J},|y-x|\leq h}\norm{S^*_{\theta,\delta,y}(t)g^{(\theta,\delta,y)}\cdot\nabla K}_{L^2(\Lambda_{\delta,y})}&\lesssim_\varepsilon\left(h(1\wedge t^{-1-d/4+\varepsilon})+(h^\beta+\delta)(1\wedge t^{-d/4})\right)\\
		&\leq h(1\wedge t^{-d/4}).\nonumber
	\end{align}
	Indeed, by the Minkowski inequality and \eqref{eq: functiong} with the identity function $\operatorname{id}$ on $\R^d$,
	\begin{align*}
		&\sup_{y\in\mathcal{J},|y-x|\leq h}\norm{S^*_{\theta,\delta,y}(t)g^{(\theta,\delta,y)}\cdot\nabla K}_{L^2(\Lambda_{\delta,y})}\\
		&\hspace*{1em}\lesssim\sup_{y\in\mathcal{J},|y-x|\leq h} \sum_{i=1}^d\sum_{|\alpha|=1}\bigg(|(y-x)^\alpha|\norm{S^*_{\theta,\delta,y}(t) \partial_iK}_{L^2(\R^d)}+\delta\norm{S^*_{\theta,\delta,y}(t) \operatorname{id}\partial_iK}_{L^2(\R^d)}\\
		&\hspace*{13em} +\norm{S^*_{\theta,\delta,y}(t)(D^{\alpha}(\theta_i(\xi)-\theta(x))(y+\delta\operatorname{id}-x)^\alpha)\partial_iK}_{L^2(\R^d)}\bigg)\\
		&\hspace*{1em}\lesssim h(1\wedge t^{-1-d/4+\varepsilon})+(h^\beta+\delta)(1\wedge t^{-d/4}).
	\end{align*}
	The last bound holds by three applications of Lemma \ref{boundS*u}, noting that both $\operatorname{id}\partial_iK$ and $(D^{\alpha}(\theta_i(\xi)-\theta(x))(y+\delta\operatorname{id}-x)^\alpha)\partial_iK$ are compactly supported functions with $\norm{(D^{\alpha}(\theta_i(\xi)-\theta(x))(y+\delta\operatorname{id}-x)^\alpha)\partial_iK}_{L^2(\R^d)}\lesssim h^{\beta}$ by the Hölder assumption on $\theta$. 
	Next, we study the shift from $\bar{S}_{\theta,\delta,y}(t)$ to $\e^{ta\Delta}$. 
	Assumption \ref{ass: total}, the triangle inequality and \eqref{eq: sharpboundg} imply  
	\begin{align*}
		&\sum_{k=1}^{N}w_k(x)\sc{(\bar{S}_{\delta,x_k}(2s)-\e^{2sa\Delta})\nabla K}{g^{(\theta,x_k,\delta)}\cdot \nabla K}_{L^2(\R^d)}\\
		&\hspace*{1em}\leq \sum_{k=1}^{N}|w_k(x)||\sc{\bar{S}_{\delta,x_k}(s)\nabla K}{\bar{S}_{\delta,x_k}(s)g^{(\theta,x_k,\delta)}\cdot \nabla K}_{L^2(\R^d)}|\\
		&\hspace*{3em}+\sum_{k=1}^{N}|w_k(x)||\sc{\e^{sa\Delta}\nabla K}{\e^{sa\Delta}g^{(\theta,x_k,\delta)}\cdot \nabla K}_{L^2(\R^d)}|\\
		&\hspace*{1em}\lesssim hs^{-1-d/2+2\varepsilon}.
	\end{align*}
	On the other hand, by Lemma \ref{FeynmanKac}(iii), we have for $z\in L^2(\R^d)$
	\[
	\sup_{y\in\mathcal{J}}\norm{\bar{S}_{\delta,y}(s)z-\e^{sa\Delta}z}_{L^2(\R^d)}\lesssim \delta^{1/2}s^{1/4}\e^{-\delta^{-2}s^{-1}/2}\lesssim\delta^{6+1/2}s^{3+1/4},
	\]
	using that $\e^{-x}\leq x^{-3}$ for $x>0$.
	Thus, by splitting the integral at some $r\in[0,t\delta^{-2}]$, we obtain 
	\begin{align}\label{eq: errorSbar}
		&\int_{0}^{T}\int_{0}^{t\delta^{-2}}\sum_{k=1}^{N}w_k(x)\sc{(\bar{S}_{\delta,x_k}(2s)-\e^{2sa\Delta})\nabla K}{g^{(\theta,x_k,\delta)}\cdot \nabla K}_{L^2(\R^d)}\diff s\diff t\nonumber\\
		&\hspace*{1em}\lesssim \int_{0}^{T}\int_{0}^{r}\delta^{6+1/2}s^{3+1/4}\diff s\diff t+\int_{0}^{T}\int_{r}^{t\delta^{-2}} hs^{-1-d/2+2\varepsilon}\diff s\diff t\nonumber\\
		&\hspace*{1em}\lesssim \delta^{6+1/2}\int_{0}^{r}s^{3+1/4}\diff s+h\int_{r}^{T\delta^{-2}}s^{-1-d/2+2\varepsilon} \diff s\nonumber\\
		&\hspace*{1em}\lesssim \delta^{6+1/2}r^{4+1/4}+hr^{-d/2+2\varepsilon}
	\end{align}
	for any $\varepsilon>0$. The choice $r=\delta^{-1}$ yields that the last display is of order $o(\delta^2+\delta^{d/2}).$
	We are left with the shift from $S^*_{\theta,\delta,x_k}(t)$ to $\bar{S}_{\delta,x_k}(t)$. 
	By the variation of parameters formula, cf.~\cite[p. 161]{EngNag00}, we have for $y\in\Lambda$
	\begin{align*}
		G_{\theta,\delta,y}(s)&\coloneqq S^*_{\theta,\delta,y}(s)-\bar{S}_{\delta,y}(s)
		=\int_{0}^{s}\bar{S}_{\delta,y}(r)(A^*_{\theta,\delta,y}-\bar{A}_{\delta,y})S^*_{\theta,\delta,y}(s-r)\diff r\\
		&=-\delta\int_{0}^{s}\bar{S}_{\delta,y}(r)( \theta(y+\delta\cdot)\cdot\nabla+\delta\varphi_\theta(y+\delta\cdot))S^*_{\theta,\delta,y}(s-r)\diff r.
	\end{align*}
	Consequently,
	\begin{align*}
		\begin{split}
			&\int_{0}^{T}\int_{0}^{t\delta^{-2}}\sum_{k=1}^{N}w_k(x)\sc{S^*_{\theta,\delta,x_k}(s)\nabla K}{S^*_{\theta,\delta,x_k}(s)g^{(\theta,x_k,\delta)}\cdot \nabla K}_{L^2(\Lambda_{\delta,x_k})}\diff s\diff t\\
			&\hspace*{1em}=\int_{0}^{T}\int_{0}^{t\delta^{-2}}\sum_{k=1}^{N}w_k(x)\sc{\bar{S}_{\delta,x_k}(2s)\nabla K}{g^{(\theta,x_k,\delta)}\cdot \nabla K}_{L^2(\Lambda_{\delta,x_k})}\diff s\diff t\\
			&\hspace{3em}+\int_{0}^{T}\int_{0}^{t\delta^{-2}}\sum_{k=1}^{N}w_k(x)\sc{\bar{S}_{\delta,x_k}(s)\nabla K}{G_{\theta,\delta,x_k}(s)g^{(\theta,x_k,\delta)}\cdot \nabla K}_{L^2(\Lambda_{\delta,x_k})}\diff s\diff t\\
			&\hspace{3em}+\int_{0}^{T}\int_{0}^{t\delta^{-2}}\sum_{k=1}^{N}w_k(x)\sc{G_{\theta,\delta,x_k}(s)\nabla K}{\bar{S}_{\delta,x_k}(s)g^{(\theta,x_k,\delta)}\cdot \nabla K}_{L^2(\Lambda_{\delta,x_k})}\diff s\diff t\\
			&\hspace{3em}+\int_{0}^{T}\int_{0}^{t\delta^{-2}}\sum_{k=1}^{N}w_k(x)\sc{G_{\theta,\delta,x_k}(s)\nabla K}{G_{\theta,\delta,x_k}(s)g^{(\theta,x_k,\delta)}\cdot \nabla K}_{L^2(\Lambda_{\delta,x_k})}\diff s\diff t.
		\end{split}
	\end{align*}
	The first summand in the last display has already been examined. We show the desired rate for the second summand. The bound for the other ones is obtained analogously. 
	Arguing as for \eqref{eq: varparam}, we get
	\begin{align}
		&\left|\int_{0}^{T}\int_{0}^{t\delta^{-2}}\sum_{k=1}^{N}w_k(x)\sc{\bar{S}_{\delta,x_k}(s)\nabla K}{G_{\theta,\delta,x_k}(s)g^{(\theta,x_k,\delta)}\cdot \nabla K}_{L^2(\Lambda_{\delta,x_k})}\diff s\diff t\right|\nonumber\\
		&\hspace*{1em}\lesssim \delta \int_{0}^{T}\int_{0}^{t\delta^{-2}}\int_{0}^{t\delta^{-2}}\sum_{k=1}^N|w_k(x)|\norm{\bar{S}_{\delta,x_k}(s+s')\nabla K}_{L^2(\Lambda_{\delta,x_k})}s'^{-1/2}\nonumber\\
		&\hspace{14em}\cdot\norm{S^*_{\theta,\delta,x_k}(s')g^{(\theta,x_k,\delta)}}_{L^2(\Lambda_{\delta,x_k})}\diff s\diff s'\diff t\nonumber\\
		&\hspace*{1em}\lesssim \delta \int_{0}^{T}\int_{0}^{t\delta^{-2}}\int_{0}^{t\delta^{-2}}(1\wedge (s+s')^{-1-d/4+\varepsilon})s'^{-1/2}(1\wedge s'^{-d/4+\varepsilon})h\diff s'\diff s\diff t\nonumber\\
		\label{eq: errorvariation}
		&\hspace*{1em}=O(h\delta (1\vee \delta^{-1/2+d/2-6\varepsilon}))
	\end{align}
	for any $\varepsilon>0.$
	Combining \eqref{eq: errorSbar}, \eqref{eq: errorvariation} and \eqref{eq: order h} yields the assertion.
\end{proof}

\begin{proof}[Proof of Proposition \ref{prop: splittingrest}]
	Writing for $u\in L^2(\Lambda)$ 
	\[\sc{X(t)}{u}=\sc{S_\theta(t)X_0}{u}+\sc{\bar{X}(t)}{u},\]
	we obtain the decomposition
	\begin{align*}
		\mathcal{R}_\delta^x&=\bar{\mathcal{R}}_\delta^x+\sum_{k=1}^Nw_k(x)\int_0^T\sc{\bar{X}(t)}{\nabla K_{\delta,x_k}}\sc{S_\theta(t)X_0}{(\varphi_\theta+(\theta-\theta(x))\cdot\nabla)K_{\delta,x_k}}\diff t\\
		&\hspace*{3em}+\sum_{k=1}^Nw_k(x)\int_0^T\sc{S_\theta(t)X_0}{\nabla K_{\delta,x_k}}\sc{\bar{X}(t)}{(\varphi_\theta+(\theta-\theta(x))\cdot\nabla)K_{\delta,x_k}}\diff t\\
		&\hspace*{3em}+\sum_{k=1}^Nw_k(x)\int_0^T\sc{S_\theta(t)X_0}{\nabla K_{\delta,x_k}}\sc{S_\theta(t)X_0}{(\varphi_\theta+(\theta-\theta(x))\cdot\nabla)K_{\delta,x_k}}\diff t.
	\end{align*}
	We only show that the higher order terms are of the desired order. The arguments for the lower order ones, i.e., terms containing $\varphi_\theta$ are similar and thus skipped. We hence have to show for all $1\leq i\leq d$, using the definition of $g^{(\theta,x_k,\delta)}$ in \eqref{eq: functiongtotal}, that 
	\begin{align}
		&\delta^{-2}\sum_{k=1}^Nw_k(x)\int_0^T\sc{\bar{X}(t)}{(\partial_iK) _{\delta,x_k}}\sc{S_\theta(t)X_0}{(g^{(\theta,x_k,\delta)}\cdot\nabla K)_{\delta,x_k}}\diff t\label{eq: init1}\\
		&\hspace*{3em}+\delta^{-2}\sum_{k=1}^Nw_k(x)\int_0^T\sc{S_\theta(t)X_0}{(\partial_iK)_{\delta,x_k}}\sc{\bar{X}(t)}{(g^{(\theta,x_k,\delta)}\cdot\nabla K)_{\delta,x_k}}\diff t\label{eq: init2}\\
		&\hspace*{3em}+\delta^{-2}\sum_{k=1}^Nw_k(x)\int_0^T\sc{S_\theta(t)X_0}{(\partial_iK)_{\delta,x_k}}\sc{S_\theta(t)X_0}{(g^{(\theta,x_k,\delta)}\cdot\nabla K)_{\delta,x_k}}\diff t\label{eq: init3}\\
		&\hspace*{3em}=o_{\P}(h^\beta)\nonumber
	\end{align}
	which is done by controlling the expectations and standard deviations of \eqref{eq: init1}, \eqref{eq: init2} and \eqref{eq: init3} separately for a deterministic initial condition $X_0\in L^p(\Lambda)\cap \mathcal{D}(A_\theta)$, $p>2,$ and for the stationary case $X_0=\int_{-\infty}^0S_\theta(-t')\diff W(t')$ under the extra constraint that $c-\nabla\cdot\theta\leq \gamma<0.$  
	\paragraph*{Case 1: $X_0$ is deterministic}
	Recalling \eqref{eq: order h}, the definition \eqref{eq: functiong} and the upper bound \eqref{eq: sharpboundg}, it holds for the deterministic term \eqref{eq: init3} by Lemma \ref{boundS*u}, noting furthermore $K=(-\Delta)\bar{K}$ for some $\bar{K}\in H^4(\R^d)$ with compact support, that
	\begin{align}
		&\delta^{-2}\sum_{k=1}^Nw_k(x)\int_0^T\sc{S_\theta(t)X_0}{(\partial_iK)_{\delta,x_k}}\sc{S_\theta(t)X_0}{(g^{(\theta,x_k,\delta)}\cdot\nabla K)_{\delta,x_k}}\diff t\nonumber\\
		&\hspace*{1em}=\delta^{-2}\sum_{k=1}^{N}w_k(x)\int_{0}^{T}\sc{(-A_\theta) X_0}{S^*_{\theta}(t)(-A_\theta^*)^{-1}(\partial_iK)_{\delta,x_k}}\nonumber\sc{X_0}{S^*_\theta(t)(g^{(\theta,x_k,\delta)}\cdot\nabla K)_{\delta,x_k}}\diff t\nonumber\\
		&\hspace*{1em}\lesssim\delta^{2}\int_{0}^{T\delta^{-2}}\norm{A_\theta X_0}\norm{X_0}\sup_{y\in\mathcal{J},|y-x|\leq h}\norm{S^*_{\theta,\delta,y}(t)(A_{\theta,\delta,y}^*)^{-1}\partial_iK}_{L^2(\Lambda_{\delta,y})}\cdot\norm{S^*_{\theta,\delta,y}(t)g^{(\theta,y,\delta)}\cdot\nabla K}_{L^2(\Lambda_{\delta,y})}\diff t\nonumber\\
		&\hspace*{1em}\lesssim\delta^2\int_0^{T\delta^{-2}}(1\wedge t^{-1/2-d/4+\varepsilon})\left(h(1\wedge t^{-1-d/4+\varepsilon})+(\delta+h^\beta)(1\wedge t^{-d/4})\right)\diff t\nonumber\\
		&\hspace*{1em}=o(h^\beta).
		\label{eq:detinit1}
	\end{align}
	The expectations of \eqref{eq: init1} and \eqref{eq: init2} are zero. 
	For its standard deviations, note first that, for any $y\in\mathcal{J}$ with $|y-x|\leq h$, $u,v\in L^2(\R^d)$, it holds
	\begin{align}
		&\operatorname{Var}\left(\int_0^T\sc{\bar{X}(t)}{u_{\delta,y}}\sc{X_0}{S_\theta^*(t)v_{\delta,y}}\diff t\right)\nonumber\\
		&\hspace*{1em}=2\int_0^T\int_0^t\sc{X_0}{S_\theta^*(t)v_{\delta,y}}\sc{X_0}{S_\theta^*(s)v_{\delta,y}}\operatorname{Cov}\left(\sc{\bar{X}(t)}{u_{\delta,y}},\sc{\bar{X}(s)}{u_{\delta,y}}\right)\diff s\diff t\nonumber\\
		&\hspace*{1em}=2\int_0^T\int_0^t\sc{X_0}{S_\theta^*(t)v_{\delta,y}}\sc{X_0}{S_\theta^*(s)v_{\delta,y}}\nonumber\int_0^s\sc{S^*_\theta(t-r)u_{\delta,y}}{S^*_\theta(s-r)u_{\delta,y}}\diff r\diff s\diff t\nonumber\\
		\label{eq:t1}
		&\hspace*{1em}=2\int_0^{T}\int_0^{t}\sc{X_0}{S_\theta^*(t)v_{\delta,y}}\sc{X_0}{S_\theta^*(s)v_{\delta,y}}\int_0^s\sc{S^*_\theta(t-s+r)u_{\delta,y}}{S^*_\theta(r)u_{\delta,y}}\diff r\diff s\diff t\\
		\label{eq:t2}
		&\hspace*{1em}=2\int_0^{T}\int_0^{t}\int_0^{t-s}\sc{X_0}{S_\theta^*(t)v_{\delta,y}}\sc{X_0}{S_\theta^*(t-s)v_{\delta,y}}\sc{S^*_\theta(s+r)u_{\delta,y}}{S^*_\theta(r)u_{\delta,y}}\diff r\diff s\diff t.
	\end{align}
	Applying the scaling Lemma \ref{rescaledsemigroup} to \eqref{eq:t1} with $v=\partial_iK$ and $u=g^{(\theta,y,\delta)}\cdot\nabla{K}$, followed by multiple applications of the Cauchy--Schwarz inequality and Lemma \ref{boundS*u}, thus yields by \eqref{eq: order h}
	\begin{align*}
		&\sup_{y\in\mathcal{J},|y-x|\leq h}\operatorname{Var}\left(\int_0^T\sc{S_\theta(t)X_0}{(\partial_iK)_{\delta,y}}\sc{\bar{X}(t)}{g^{(\theta,y,\delta)}\cdot\nabla K)_{\delta,y}}\diff t\right)\\            
		&\hspace*{1em}\lesssim\delta^6\int_0^{T\delta^{-2}}\left(h^2(1\wedge r^{-2-d/2+2\varepsilon})+(h^{2\beta}+\delta^2)(1\wedge r^{-d/2})\right)\diff r=o(\delta^4h^{2\beta}).
	\end{align*}
	Hence,
	\begin{align}
		&\operatorname{Var}\left(\delta^{-2}\sum_{k=1}^Nw_k(x)\int_0^T\sc{S_\theta(t)X_0}{(\partial_iK)_{\delta,x_k}}\sc{\bar{X}(t)}{(g^{(\theta,x_k,\delta)}\cdot\nabla K)_{\delta,x_k}}\diff t\right)\nonumber\\
		&\hspace*{1em}\lesssim \delta^{-4}\sum_{k=1}^N|w_k(x)|\operatorname{Var}\left(\int_0^T\sc{S_\theta(t)X_0}{(\partial_iK)_{\delta,x_k}}\sc{\bar{X}(t)}{g^{(\theta,x_k,\delta)}\cdot\nabla K)_{\delta,x_k}}\diff t\right)\nonumber\\
		&\hspace*{1em}=o(h^{2\beta}).
		\label{eq:detinit2}
	\end{align}
	Analogue calculations with $u=\partial_iK$, $v=g^{(\theta,y,\delta)}\cdot\nabla{K}$ applied to \eqref{eq:t2} also imply 
	\begin{equation}\label{eq:detinit3}
		\operatorname{Var}\left(\delta^{-2}\sum_{k=1}^Nw_k(x)\int_0^T\sc{\bar{X}(t)}{(\partial_iK) _{\delta,x_k}}\sc{S_\theta(t)X_0}{(g^{(\theta,x_k,\delta)}\cdot\nabla K)_{\delta,x_k}}\diff t\right)=o(h^{2\beta}).
	\end{equation}
	Combining \eqref{eq:detinit1}, \eqref{eq:detinit2} and \eqref{eq:detinit3} yields the claim.
	
	\paragraph*{Case 2: $X$ is stationary}
	Itô's isometry implies again that the expectations of \eqref{eq: init1} and \eqref{eq: init2} are zero, while the expected value of \eqref{eq: init3} is bounded by 
	\begin{align*}
		&\delta^{-2}\sum_{k=1}^Nw_k(x)\int_0^T\E[\sc{S_\theta(t)X_0}{(\partial_iK)_{\delta,x_k}}\sc{S_\theta(t)X_0}{(g^{(\theta,x_k,\delta)}\cdot\nabla K)_{\delta,x_k}}]\diff t\\
		&\hspace*{1em}=\delta^{-2}\sum_{k=1}^Nw_k(x)\int_0^T\int_0^\infty \langle S^*_\theta(t+t')(\partial_iK)_{\delta,x_k},S^*_\theta(t+t')(g^{(\theta,x_k,\delta)}\cdot\nabla K)_{\delta,x_k}\rangle\diff t'\diff t\\
		&\hspace*{1em}\lesssim\delta^2\int_0^{T\delta^{-2}}\int_0^\infty\sup_{y\in\mathcal{J},|y-x|\leq h}\norm{S^*_{\theta,\delta,y}(t+t')\partial_iK}_{L^2(\Lambda_{\delta,y})}\sup_{y\in\mathcal{J},|y-x|\leq h}\norm{S^*_{\theta,\delta,y}(t+t')g^{(\theta,y\delta)\cdot\nabla K}}_{L^2(\Lambda_{\delta,y})}\diff t'\diff t\\
		&\hspace*{1em}\lesssim \delta^2\int_0^{T\delta^{-2}}\left(h(1\wedge t^{-1-d/4+\varepsilon})+(\delta+h^\beta)(1\wedge t^{-d/4})\right)\diff t=o(h^\beta).
	\end{align*}
	We can bound the variance of \eqref{eq: init1} again by
	\begin{align*}
		&\operatorname{Var}\left(\delta^{-2}\sum_{k=1}^Nw_k(x)\int_0^T\sc{\bar{X}(t)}{(\partial_iK)_{\delta,x_k}}\sc{S_\theta(t)X_0}{(g^{(\theta,x_k,\delta)}\cdot\nabla K)_{\delta,x_k}}\diff t\right)\\
		&\hspace*{1em}\lesssim\delta^{-4}\sup_{y\in\mathcal{J},|y-x|\leq h}\operatorname{Var}\left(\int_0^T\sc{\bar{X}(t)}{(\partial_iK)_{\delta,y}}\sc{S_\theta(t)X_0}{(g^{(\theta,y,\delta)}\cdot\nabla K)_{\delta,y}}\diff t\right)
	\end{align*}
	similar to the deterministic case. Since $\varphi_\theta=c-\nabla\cdot\theta\leq \gamma$ for some $\gamma<0$ as assumed, the upper bound in Lemma \ref{boundS*u} holds with $\e^{-\gamma t\delta^2}$, i.e., $c_1=-\gamma$. By similar calculations as in Lemma \ref{ConvouterVar}, i.e., by Wick's Theorem, and using again Itô's isometry we get 
	\begin{align*}
		&\sup_{y\in\mathcal{J},|y-x|\leq h}\operatorname{Var}\left(\int_0^T\sc{\bar{X}(t)}{(\partial_iK)_{\delta,y}}\sc{S_\theta(t)X_0}{(g^{(\theta,y,\delta)}\cdot\nabla K)_{\delta,y}}\diff t\right)\\
		&\hspace*{1em}=2\int_0^T\int_0^t\operatorname{Cov}\left(\sc{\bar{X}(t)}{(\partial_iK)_{\delta,y}},\sc{\bar{X}(s)}{(\partial_iK)_{\delta,y}}\right)\\
		&\quad\quad\quad\quad\cdot\operatorname{Cov}\left(\sc{S_\theta(t)X_0}{(g^{(\theta,y,\delta)\cdot K)_{\delta,y}}},\sc{S_\theta(s)X_0}{(g^{(\theta,y,\delta)\cdot K)_{\delta,y}}}\right)\diff s\diff t\\
		&\hspace*{1em}=2\int_0^T\int_0^t\int_0^s\sc{S^*_\theta(t-r)(\partial_iK)_\delta,y}{S^*_\theta(s-r)(\partial_iK)_{\delta,y}}\diff r\\
		&\quad\quad\quad\quad\cdot\int_0^\infty\sc{S^*_\theta(t+r')(g^{(\theta,y\delta)}\cdot \nabla K)_{\delta,y}}{S^*_\theta(s+r')(g^{(\theta,y\delta)}\cdot \nabla K)_{\delta,y}}\diff r'\diff s\diff t\\
		&\hspace*{1em}\lesssim\delta^6\int_0^{T\delta^{-2}}(1\wedge t^{-1-d/4+\varepsilon})\diff t\int_0^{T\delta^{-2}}(1\wedge s^{-1-d/4+\varepsilon})\diff s\\
		&\quad\quad\quad\quad\cdot\int_0^\infty\left(h^2(1\wedge r^{-2-d/2+2\varepsilon})+(h^{2\beta}+\delta^2)e^{-\gamma r\delta^2}(1\wedge r^{-d/2})\right)\diff r.
	\end{align*}
	If $d\geq3$, the last display is already of order $o(\delta^4h^{2\beta})$.
	For $d\leq 2$, we bound $\e^{-\gamma r\delta^2}\lesssim r^{-1/2-\varepsilon}\delta^{-1-2\varepsilon}$, and hence
	\[ \delta^6\left(h^2+(h^{2\beta}+\delta^2)\right)\delta^{-1-2\varepsilon}=o(\delta^4h^{2\beta})
	\]
	by \eqref{eq: order h}. Similar calculations also hold for the standard deviations of \eqref{eq: init2} and \eqref{eq: init3}, implying the claim.
\end{proof}

\subsubsection{Remaining proofs for Section \ref{sec:prooflower}}
\begin{proof}[Proof of Lemma \ref{lem:concrete_lower_bound}]
	Define the integral kernels 
	\[
	\kappa_{k,l}(t)=c_{\theta^0,\delta,k,l}(t)-c_{\theta^1,\delta,k,l}(t).
	\]
	It suffices to derive the upper bound for the $L^2$-norm of $\kappa_{k,l}$, as the proof remains valid if one replaces $K_{\delta,x_k}$ by $\delta^{-4}(A^2_{\theta,\delta,x_k}K)_{\delta,x_k}$.
	This also gives the desired upper bound on the $L^2$-norm of $\kappa_{k,l}''(t)$. 
	Following the structure as in the proof of \cite[Lemma 6.10]{altmeyer_anisotrop2021}, we start by some initial notation and the diagonalizability of the semigroup $S^*_{\theta,\delta,x_k}(t)$. 
	We write $\Delta$ and $\e^{t\Delta}$ for the Laplacian and its generated semigroup on $L^2(\Lambda)$, as well as $\Delta_{\delta,x}$ and $\e^{t\Delta_{\delta,x}}$ on $L^2(\Lambda_{\delta,x})$, and $\Delta_0$ and $\e^{t\Delta_0}$ on $L^2(\R^d)$.
	We have that
	$$A^*_{\theta^1}=\Delta-\theta \cdot\nabla+(c-\nabla\cdot\theta ).$$
	Given that $\theta$ is a conservative vector field, we choose a potential $\xi$ such that $\nabla\xi(x)=\theta(x)/2$ for some function $\xi$. 
	By \cite[Example 10]{giani_2016}, $A^*_{\theta^1}$ is diagonalizable, i.e., 
	$$U_{\theta^1}^{-1}A^*_{\theta^1} U_{\theta^1} z=\Delta z+\tilde{c}_\theta z$$
	with the multiplication operator $(U_{\theta^1} z)(x)=\e^{\nabla\xi(x)}z(x)$ and $\tilde{c}_\theta=c-\frac{\nabla\cdot\theta }{2}-\frac{|\theta|^2}{4}\leq0$ due to the choice of $\theta^1$. \cite[Example 2.1 in Section II.2]{EngNag00} and the rescaling Lemma \ref{rescaledsemigroup} furthermore imply that 
	$$S^*_{\theta^1,\delta,x_k}(t)=U^{-1}_{\theta^1,\delta,x_k}\e^{t\Delta_{\delta,x_k}}U_{\theta^1,\delta,x_k}\e^{t\delta^2\tilde{c}_\theta(x_k+\delta x)},\quad S_{\theta^0,\delta,x_k}(t)=
	\e^{t\Delta_{\theta,\delta,x_k}}$$
	with $U_{\theta^1,\delta,x_k}(x)=U_{\theta^1}(x_k+\delta x)$. Note that 
	$$\e^{t\Delta}=U_{\vartheta^1}(x_k)^{-1}\e^{t\Delta}U_{\vartheta^1}(x_k).$$
	We decompose $\kappa_{k,l}=\sum_{j=1}^4\kappa ^{(j)}_{k,l}$, with
	\begin{align*}
		\kappa_{k,l}^{(1)}(t) &= \int_0^\infty\sc{U_{\vartheta^1}(x_k)^{-1}\e^{(t+t')\Delta}(U_{\vartheta^1}(x_k)-U_{\vartheta^1}\e^{\tilde{c}_{\vartheta^1}(t+t')})K_{\delta,x_k}}{\e^{t'\Delta}K_{\delta,x_l}}\diff t',\\ 
		\kappa_{k,l}^{(2)}(t) &= \int_0^\infty\sc{(U_{\vartheta^1}(x_k)^{-1}-U_{\vartheta^1}^{-1})U_{\theta^1}S^*_{\theta^1}(t+t')K_{\delta,x_k}}{\e^{t'\Delta}K_{\delta,x_l}}\diff t',\\
		\kappa_{k,l}^{(3)}(t) &= \int_0^\infty\sc{S^*_{\vartheta^1}(t+t')K_{\delta,x_k}}{U_{\vartheta^1}(x_k)^{-1}\e^{t'\Delta}(U_{\vartheta^1}(x_k)-U_{\vartheta^1}\e^{\tilde{c}_{\vartheta^1}t'})K_{\delta,x_l}}\diff t',\\
		\kappa_{k,l}^{(4)}(t) &= \int_0^\infty\sc{S^*_{\vartheta^1}(t+t')K_{\delta,x_k}}{(U_{\vartheta^1}(x_k)^{-1}-U_{\vartheta^1}^{-1})\e^{t'\Delta}U_{\vartheta^1} \e^{\tilde{c}_{\vartheta^1}t'}K_{\delta,x_l}}\diff t'.
	\end{align*}
	It suffices to show that $\sum_{1\leq k,l\leq N}\norm{\kappa_{k,l}^{(j)}}^2_{L^2([0,T])}\leq c_3\delta^8\sum_{1\leq k\leq N}(|\theta(x_k)|^2+\delta^2\tilde{c}_\theta(x_k)^2)$ for $j=1,2$. 
	The arguments for $j=3,4$ are similar and therefore skipped. Diagonal (i.e., $k=l$) and off-diagonal (i.e., $k\neq l$) terms are treated separately. 
	Set $K_{k,l}=K(\cdot+\delta^{-1}(x_k-x_l))$.
	Lemma \ref{FeynmanKac} yields
	\begin{align}
		\sup_{y\in \operatorname{supp}K}|(\e^{t\Delta_{\delta,x_k}} K_{k,l})(y)|&\lesssim \sup_{y\in \operatorname{supp}K}|(\e^{t\Delta_{0}} |K_{k,l}|)(y)|\nonumber\\
		&= \sup_{y\in \operatorname{supp}K}\int_{\R^d}(4\pi t)^{-d/2}\exp(-|x-y|^2/(4t))| K_{k,l}(x)|\,\diff x\nonumber\\
		&\leq (4\pi t)^{-d/2}\e^{-c'\frac{|x_k-x_l|^2}{\delta^2t}}\| K\|_{L^1(\R^d)}
		\lesssim t^{-d/2}\e^{-c'\frac{|x_k-x_l|^2}{\delta^2 t}},\label{eq:kappa_3}
	\end{align}
	for some $c'>0$.
	\paragraph*{Case $j=1$.}
	We start with scaling as in Lemma \ref{rescaledsemigroup} and changing variables such that, using the multiplication operators $$V_{t,t',\delta,k}(x)=1-\e^{\tilde{c}_{\vartheta^1}(x_k+\delta x)\delta^2(t+t')-\xi(x_k)+\xi(x_k+\delta x)},$$
	\begin{align}
		\kappa_{k,l}^{(1)}(t\delta^2)&= \delta^2\int_0^\infty\sc{\e^{(t+t')\Delta_{\delta,x_k}}V_{t,t',\delta,k}K}{\e^{t'\Delta_{\delta,x_k}}K_{k,l}}_{L^2(\Lambda_{\delta,x_k})}\diff t'\nonumber\\
		&= \delta^2\int_0^\infty\sc{\e^{(t/2+t')\Delta_{\delta,x_k}}V_{t,t',\delta,k}K}{\e^{(t/2+t')\Delta_{\delta,x_k}}K_{k,l}}_{L^2(\Lambda_{\delta,x_k})}\diff t'.\label{eq:kappa_4}
	\end{align}
	Since $K$ is compactly supported and $\tilde{c}_{\theta^1}\leq 0$, $V_{t,t',\delta,k}$ can be extended to smooth multiplication operators with operator norms bounded by $v_{t,t',\delta,k}= -\tilde{c}_{\vartheta^1}(x_k)\delta^2(t+t')+|\theta(x_k)|\delta$. (This can be seen from a Taylor expansion, using the Hölder smoothness assumptions for the higher order Taylor terms.) Recalling $K=\Delta^2\tilde{K}$,  Lemma \ref{boundS*u} gives, for any $\epsilon'>0$,
	\begin{align*}
		|\kappa_{k,l}^{(1)}(t\delta^2)|&\leq  \delta^2 \int_0^\infty \norm{\e^{(t/2+t')\Delta_{\delta,x_k}}V_{t,t',\delta,k}K}_{L^2(\Lambda_{\delta,x_k})}\norm{\e^{(t/2+t')\Delta_{\delta,x_k}}K_{k,l}}_{L^2(\Lambda_{\delta,x_k})}\diff t'\nonumber\\
		&\leq  \delta^2  \int_0^\infty v_{t,t',\delta,k}(1\wedge (t+t')^{-4-d/2+\epsilon'})\diff t'\nonumber\\
		&\lesssim \delta^3 (-\tilde{c}_{\vartheta^1}(x_k)\delta+ |\theta(x_k))|(1\wedge t^{-1-d/2})\nonumber\\
		&\leq \delta^3 (|\theta(x_k)|+|\delta c_\theta(x_k)|)(1\wedge t^{-1-d/2}).
	\end{align*}
	Changing variables therefore proves for the sum of diagonal terms $$\sum_{1\leq k\leq N}\norm{\kappa_{k,k}^{(1)}}^2_{L^2([0,T])}\lesssim \sum_{1\leq k\leq M}\delta^8 |\theta(x_k)|^2+\delta^{10}|c_\theta(x_k)|^2.$$
	Using Lemma \ref{boundS*u}, the integrand in \eqref{eq:kappa_4} can be bounded as follows,
	\begin{align*}
		&\sc{\e^{(t/2+t')\Delta_{\delta,x_k}}V_{t,t',\delta,k}K}{\e^{(t/2+t')\Delta_{\delta,x_k}}K_{k,l}}_{L^2(\Lambda_{\delta,x_k})}\lesssim v_{t,t',\delta,k}(1\wedge(t+t')^{-4-d/2+\varepsilon'}).
	\end{align*}
	On the other hand, using \eqref{eq:kappa_3}, it also satisfies the bound
	\begin{align*}
		\sc{\e^{(t/2+t')\Delta_{\delta,x_k}}V_{t,t',\delta,k}K}{\e^{(t/2+t')\Delta_{\delta,x_k}}K_{k,l}}_{L^2(\Lambda_{\delta,x_k})}
		&=\sc{V_{t,t',\delta,k}K}{\e^{(t+2t')\Delta_{\delta,x_k}}K_{k,l}}_{L^2(\Lambda_{\delta,x_k})}\\
		&\lesssim \norm{V_{t,t',\delta,k }K}_{L^1(\R^d)}\sup_{y\in \operatorname{supp}K}\left|( \e^{(t+2t')\Delta_{\delta,x_k}} K_{k,l})(y)\right|\\
		&\lesssim v_{t,t',\delta,k}(t')^{-d/2}\exp\left(-c'\frac{|x_k-x_l|^2}{\delta^2 t}\right).
	\end{align*}
	With respect to the off-diagonal terms, we therefore have, using the inequality $\min(a,b)\leq a^{1-\varepsilon}b^{\varepsilon}$ for $a,b\geq0$,
	\begin{align}
		\kappa_{k,l}^{(1)}(t\delta^2)&= \delta^2\int_0^\infty\sc{V_{t,t',\delta,k}K}{\e^{(t+2t')\Delta_{\delta,x_k}}K_{k,l}}_{L^2(\Lambda_{\delta,x_k})}\diff t'\nonumber\\
		&\lesssim \delta^2\int_0^\infty v_{t,t',\delta,k}^{1-\varepsilon}(1\wedge(t+t')^{-4-d/2+\epsilon'})^{1-\varepsilon}\sup_{y\in \operatorname{supp}K}\left|( \e^{(t+2t')\Delta_{\delta,x_k}} K_{k,l})(y)\right|^{\varepsilon}\diff t'\nonumber\\
		&\lesssim \delta^3(|\theta(x_k)|+\delta|\tilde{c}_\theta(x_k)|)(1\wedge t^{-1-d/2})\e^{-\varepsilon c'\frac{|x_k-x_l|^2}{\delta^2t}}.
		\label{eq:kappa_8}
	\end{align}
	Applying the bound $$\int_0^{\infty}t^{-p-1}\e^{-a/t}\diff t=a^{-p}\int_0^{\infty}t^{-p-1}\e^{-1/t}\diff t\lesssim a^{-p}$$ to $p=1+d>d$ and $a=c'\epsilon\delta^{-2}|x_k-x_l|^2$, we obtain
	\begin{equation}
		\begin{split}\label{eq:kappa_9}
			\int_0^T \kappa_{k,l}^{(1)}(t)^2\diff t &\lesssim \delta^8(|\theta(x_k)|+\delta|\tilde{c}_\theta(x_k)|)^2 \int_0^{\infty} t^{-2-d}\e^{-c'\epsilon\frac{|x_k-x_l|^2}{\delta^2 t}}\diff t\\\nonumber
			&\lesssim \frac{\delta^{10+2d}(|\theta(x_k)|^2+\delta^2|\tilde{c}_\theta(x_k)|^2)}{|x_k-x_l|^{2+2d}}.
		\end{split}
	\end{equation}
	Recalling that the $x_k$ are $\delta$-separated, we get from Lemma \ref{lem:sum:inverse:packing} below that
	\begin{align*} 
		\sum_{1\leq k\neq l\leq N}\norm{\kappa_{k,l}^{(1)}}^2_{L^2([0,T])}&\lesssim \delta^{10+2d}\sum_{k=1}^N\left(|\theta(x_k)|^2+\delta^2|\tilde{c}_\theta(x_k)|^2\right)\sum_{l=1,l\neq k}^N \frac{1}{|x_k-x_l|^{2+2d}}\\
		&\lesssim \delta^{8}\sum_{k=1}^N|\theta(x_k)|^2+\delta^2|\tilde{c}_\theta(x_k)|^2.
	\end{align*}
	Together with the bounds for the diagonal terms, this yields, for a constant $C$ depending only on $K$, 
	\[
	\sum_{1\leq k, l\leq N}\norm{\kappa_{k,l}^{(1)}}^2_{L^2([0,T])}\leq C\delta^{8}\sum_{1\leq k\leq N}|\theta(x_k)|^2+\delta^2|\tilde{c}_\theta(x_k)|^2.
	\] 
	\paragraph*{Case $j=2$.}
	As in the previous case, we have
	\begin{align*}
		&\kappa_{k,l}^{(2)}(t\delta^2)= \delta^2 \int_0^\infty \sc{(\e^{\xi(x_k+\delta x)-\xi(x_k)}-1)S^*_{\vartheta^1,\delta,x_k}(t+t')K}{\e^{t'\Delta_{\delta,x_k}}K_{k,l}}_{L^2(\Lambda_{\delta,x_k})}\diff t'.
	\end{align*}
	Using the Cauchy--Schwarz inequality, Lemma \ref{FeynmanKac}(i) and Lemma \ref{boundS*u} with $K=\Delta^2\tilde{K}$, we get for any $\epsilon>0$
	\begin{equation}
		\begin{split}\label{eq:kappa_11}
			&\sc{(\e^{\xi(x_k+\delta x)-\xi(x_k)}-1)S^*_{\vartheta^1,\delta,x_k}(t+t')K}{\e^{t'\Delta_{\delta,x_k}}K_{k,l}}_{L^2(\Lambda_{\delta,x_k})}\\
			&\hspace*{5em}\lesssim \delta |\theta(x_k)|(1\wedge (t+t')^{-2-d/4+\epsilon})\norm{|x|\e^{t'\Delta_{0}}|K_{k,l}|}_{L^2(\R^d)}.
		\end{split}
	\end{equation}
	Note that $K_{k,l}\in C^1_c(\R^d)$ such that $|K_{k,l}|\in H^{1,\infty}(\R^d)$ and $\nabla |K_{k,l}|\in L^\infty(\R^d)$ with compact support. 
	Using now \cite[Lemma A.2(ii)]{altmeyer_nonparametric_2020} to the extent that 
	\begin{align*}
		x(\e^{t'\Delta_0}|K_{k,l}|)(x) =  (\e^{t'\Delta_0}(-2t'\nabla|K_{k,l}|+x|K_{k,l}|))(x),\label{eq:kappa_10}
	\end{align*}
	we find that the $L^2(\R^d)$-norm in \eqref{eq:kappa_11} is uniformly bounded in $t'>0$. 
	Hence, $|\kappa_{k,l}^{(2)}(t\delta^2)|\lesssim \delta^3 |\theta(x_k)|(1\wedge t^{-1/2-d/4-\epsilon})$, and changing variables shows for the sum of diagonal terms $$\sum_{1\leq k\leq N}\norm{\kappa_{k,k}^{(2)}}^2_{L^2([0,T])}\lesssim \delta^{8}\sum_{1\leq k\leq N} |\theta(x_k)|^2.$$ Regarding the off-diagonal terms, we have similarly for some $\bar{K}\in L^\infty(\R^d)$ having compact support
	\begin{align*}
		&\left|\sc{(\e^{\xi(x_k+\delta x)-\xi(x_k)}-1)S^*_{\vartheta^1,\delta,x_k}(t+t')K}{\e^{t'\Delta_{\delta,x_k}}K_{k,l}}_{L^2(\Lambda_{\delta,x_k})}\right|\\
		&\hspace*{1em}=\left|\sc{K}{S_{\vartheta^1,\delta,x_k}(t+t')(\e^{\xi(x_k+\delta x)-\xi(x_k)}-1)\e^{t'\Delta_{\delta,x_k}}K_{k,l}}_{L^2(\Lambda_{\delta,x_k})}\right|\\
		&\hspace*{1em}\lesssim \delta |\theta(x_k)| \norm{K}_{L^1(\R^d)} \sup_{y\in\operatorname{supp}K} \left|\left(\e^{(t+t')\Delta_0}|x|\e^{t'\Delta_0}|K_{k,l}|\right)(y)\right|\\
		&\hspace*{1em}\lesssim \delta |\theta(x_k)| (1\vee t') \sup_{y\in\operatorname{supp}K} \left|\left(\e^{(t+2t')\Delta_0}|\bar{K}_{k,l}|\right)(y)\right|\\
		&\hspace*{1em}\lesssim \delta |\theta(x_k)| (1\vee t')t^{-d/2}\e^{-c'\frac{|x_k-x_l|^2}{\delta^2 t}},
	\end{align*}
	using \eqref{eq:kappa_3}. 
	Arguing as for \eqref{eq:kappa_8} and \eqref{eq:kappa_9}, we then find from combining the last display with \eqref{eq:kappa_11} that $|\kappa^{(4)}_{k,l}(t\delta^2)|\lesssim \delta^3 |\theta(x_k)| t^{-1/2-d/4-\epsilon'}\e^{-\epsilon c' \frac{|x_k-x_l|^2}{\delta^2 t}}$ for some $\epsilon,\epsilon'>0$ and 
	\[
	\int_0^T \kappa_{k,l}^{(2)}(t)^2\diff t \lesssim \frac{\delta^{8+d+4\epsilon}|\theta(x_k)|}{|x_k-x_l|^{4\epsilon+d}}.
	\]
	So, all in all, for diagonal and off-diagonal terms,  $$\sum_{1\leq k, l\leq N}\norm{\kappa_{k,l}^{(2)}}^2_{L^2([0,T])}\leq C\delta^8 \sum_{1\leq k\leq N}|\theta(x_k)|^2,$$ for a constant $C$ depending only on $K$.
\end{proof}

\begin{lemma}[Lemma A.3 in \cite{altmeyer_anisotrop2021}]
	\label{lem:sum:inverse:packing}
	Let $x_1,\dots,x_N$ be $\delta$-separated points in $\R^d$, and let $p>d$. Then, for a constant $C=C(d,p)$,
	\begin{align*}
		\sum_{k=2}^N\frac{1}{|x_1-x_k|^p}\le C\delta^{-p}.
	\end{align*}
\end{lemma}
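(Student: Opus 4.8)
The plan is to run the classical dyadic packing argument that converts the $\delta$-separation of the $x_k$ into a volume bound. Write $r_k=|x_1-x_k|$ for $2\le k\le N$; the separation gives $r_k>\delta$ for all such $k$. First I would decompose the index set $\{2,\dots,N\}$ into the dyadic shells
\[
A_j=\bigl\{k:\ 2^{j-1}\delta< r_k\le 2^{j}\delta\bigr\},\qquad j\ge 1,
\]
so that $\{2,\dots,N\}=\bigcup_{j\ge 1}A_j$, and on each $A_j$ the summand $r_k^{-p}$ is comparable to $(2^{j}\delta)^{-p}$.

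Next I would bound $\#A_j$. Since the $x_k$ are $\delta$-separated, the balls $B(x_k,\delta/2)$ are pairwise disjoint, and for $k\in A_j$ each such ball is contained in $B(x_1,2^{j}\delta+\delta/2)\subseteq B(x_1,2^{j+1}\delta)$. Comparing the Lebesgue measure of the (disjoint) small balls with that of the big ball gives $\#A_j\lesssim_d 2^{jd}$, with a constant that is scale invariant and hence independent of $N$ and $\delta$. Combining this with the elementary bound $r_k^{-p}\le(2^{j-1}\delta)^{-p}$ valid on $A_j$ yields
\[
\sum_{k\in A_j}r_k^{-p}\ \lesssim_{d,p}\ 2^{jd}\,2^{-jp}\,\delta^{-p}=2^{-j(p-d)}\delta^{-p}.
\]
Finally I would sum over $j\ge 1$: because $p>d$, the geometric series $\sum_{j\ge 1}2^{-j(p-d)}$ converges to a constant depending only on $d$ and $p$, which produces the bound $\sum_{k=2}^N r_k^{-p}\le C(d,p)\,\delta^{-p}$, as claimed.

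There is no genuine obstacle here; the only step deserving a little care is to make the cardinality estimate $\#A_j\lesssim_d 2^{jd}$ uniform in $j$, $N$ and $\delta$, which is precisely what the disjointness of the $B(x_k,\delta/2)$ inside a ball of radius proportional to $2^{j}\delta$ delivers via a volume count. (This is exactly Lemma A.3 of \cite{altmeyer_anisotrop2021}, whose proof one follows verbatim.)
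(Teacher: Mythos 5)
Your argument is correct: the dyadic-shell decomposition, the volume count via the disjoint balls $B(x_k,\delta/2)$ inside $B(x_1,2^{j+1}\delta)$, and the convergent geometric series for $p>d$ give exactly the claimed bound with a constant depending only on $d$ and $p$. The paper itself offers no proof but simply cites Lemma A.3 of \cite{altmeyer_anisotrop2021}, and your packing argument is the standard proof of that statement, so there is nothing to add.
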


\paragraph*{Acknowledgments}
We gratefully acknowledge financial support provided by the Carlsberg Foundation Young Researcher Fellowship grant CF20-0604 ``Exploring the potential of nonparametric modelling of complex systems via SPDEs''.

\bibliography{references2}
\bibliographystyle{apalike2}
\end{document}